\documentclass[12pt]{amsart}
\usepackage[colorlinks=true]{hyperref}
\usepackage[all]{xy}
\usepackage{amsmath,amssymb,amsthm}
\usepackage{txfonts}
\usepackage{enumerate}

\usepackage{color}

\setlength{\topmargin}{-5mm}
\setlength{\textwidth}{170mm}
\setlength{\evensidemargin}{-5mm}
\setlength{\oddsidemargin}{-5mm} 
\setlength{\textheight}{230mm}
\theoremstyle{plain}
	\newtheorem{thm}{Theorem}[section]
	\newtheorem{prp}[thm]{Proposition}
	\newtheorem{lem}[thm]{Lemma}
	\newtheorem{cor}[thm]{Corollary}
\theoremstyle{definition}
	\newtheorem{dfn}[thm]{Definition}
	\newtheorem{ex}[thm]{Example}
\theoremstyle{remark}
	\newtheorem{rem}[thm]{Remark}



\newcommand{ \Map}{\operatorname{Map}}

\newcommand{ \cat}{\operatorname{cat}}

\newcommand{ \ch}{\operatorname{ch}}
\newcommand{ \hocolim}{\operatorname{hocolim}}


\newcommand{ \colim}{\operatorname{colim}}

\newcommand{ \id}{\operatorname{id}}

\newcommand{ \ad}{\operatorname{ad}}
\newcommand{ \SU}{\operatorname{SU}}
\newcommand{ \Sp}{\operatorname{Sp}}

\newcommand{ \Spin}{\operatorname{Spin}}

\begin{document}
\title{Higher homotopy commutativity in localized Lie groups and gauge groups}
\author{Sho Hasui}
\address{Institute of Mathematics, University of Tsukuba, Ibaraki, 305-8571, Japan}
\email{s.hasui@math.tsukuba.ac.jp}
\author{Daisuke Kishimoto}
\address{Department of Mathematics, Kyoto University, Kyoto, 606-8502, Japan}
\email{kishi@math.kyoto-u.ac.jp}
\author{Mitsunobu Tsutaya}
\address{Faculty of Mathematics, Kyushu University, Fukuoka 819-0395, Japan}
\email{tsutaya@math.kyushu-u.ac.jp}
\thanks{M.T. is supported by Grant for Scientific Research Projects from The Sumitomo Foundation and by JSPS KAKENHI (16K17592).}
\date{}
\subjclass[2010]{55P35(primary)}
\begin{abstract}
The first aim of this paper is to study the $p$-local higher homotopy commutativity of Lie groups in the sense of Sugawara.
The second aim is to apply this result to the $p$-local higher homotopy commutativity of gauge groups.
Although the higher homotopy commutativity of Lie groups in the sense of Williams is already known, the higher homotopy commutativity in the sense of Sugawara is necessary for this application.
The third aim is to resolve the $5$-local higher homotopy non-commutativity problem of the exceptional Lie group $\mathrm{G}_2$, which has been open for a long time.
\end{abstract}
\maketitle

\section{Introduction}
\label{section_intro}

Let $G$ be a compact connected Lie group.
It is well known that the $p$-localization $G_{(p)}$ decomposes into a product of spaces such that the number of the factor spaces is not larger than the rank of $G$ and the factor spaces become $p$-local spheres as $p$ gets large enough.
Then we can say that the homotopy type of $G_{(p)}$ becomes simpler as $p$ gets larger.
Now it is natural to ask how the multiplication of $G_{(p)}$ changes as $p$ grows.
McGibbon \cite{MR745146} determined the exact values of $p$ such that $G_{(p)}$ is homotopy commutative. In particular, it turned out that $G_{(p)}$ becomes homotopy commutative if $p$ gets large enough, so as far as we consider homotopy commutativity, we can say that the multiplication of $G_{(p)}$ becomes simpler as $p$ grows. One way to refine McGibbon's work is to consider the higher homotopy commutativity, that is, to consider how high the homotopy commutativity of $G_{(p)}$ gets as $p$ grows. Saumell \cite{MR1337215} went along this line to study the multiplication of $G_{(p)}$ and showed that the homotopy commutativity of $G_{(p)}$ gets higher as $p$ grows.

There are two major definitions of higher homotopy commutativities;
one is \textit{Williams $C_k$-space} \cite{MR0240818} and the other is \textit{Sugawara $C_k$-space} \cite{MR0120645,MR999733}.
The definition of Williams $C_k$-space is done by explicit conditions on higher homotopies parametrized by permutohedra, so it is somewhat intuitive.
On the other hand, the definition of Sugawara $C_k$-space is rather obstruction theoretic, so it is more applicable to practical problems.
There is an implication \cite[Proposition 6]{MR999733}
\[
	\text{Sugawara }C_k\text{-space }\Rightarrow
	\text{Williams }C_k\text{-space }.
\]
For the converse, there is no known implication of Williams $C_k$-space on Sugawara $C_l$-space in general even if $k\ne l$.
The only known counterexample for the converse is the case when $k=\ell=\infty$ \cite[Example 5]{MR999733}.

In the above mentioned result of Saumell, the higher homotopy commutativity is chosen to be the one in the sense of Williams, so it does not imply the one in the sense of Sugawara.
To state the results of McGibbon and Saumell, we need to recall the definition of the type of a Lie group.
Given a compact connected Lie group $G$, the rational cohomology is the exterior algebra
\[
	H^\ast(G;\mathbb{Q})=\Lambda_{\mathbb{Q}}(x_1,\ldots,x_\ell)
\]
by the Hopf theorem, where $x_i\in H^{2n_i-1}(G;\mathbb{Q})$ and $n_1\le\cdots\le n_\ell$.
We call the sequence of the numbers $\{n_1,\ldots,n_\ell\}$ the \textit{type} of the Lie group $G$.
\begin{thm}[McGibbon and Saumell]
\label{thm_McG_Sau}
Given a compact connected simple Lie group $G$ of type $\{n_1,\ldots,n_\ell\}$, a prime $p$ and an integer $k\ge2$, the following assertions hold.
\begin{enumerate}
\item
If $p>kn_\ell$, then $G_{(p)}$ is a Williams $C_k$-space.
\item
If $p<kn_\ell$, then $G_{(p)}$ is not a Williams $C_k$-space, except in the case when $(G,p,k)$ is $(\Sp(2),3,2)$ or $(\mathrm{G}_2,5,k)$ such that $k\le4$.
\end{enumerate}
\end{thm}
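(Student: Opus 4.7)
The plan is to treat the two parts separately, following the arguments of McGibbon \cite{MR745146} for $k=2$ and Saumell \cite{MR1337215} in general.

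For part (1), the first step is to apply the Mimura--Nishida--Toda decomposition: since $p>kn_\ell\ge n_\ell$, the localization $G_{(p)}$ is equivalent as an $H$-space to the product of $p$-local odd spheres $S^{2n_1-1}_{(p)}\times\cdots\times S^{2n_\ell-1}_{(p)}$. A Williams $C_k$-structure is governed by maps from the permutohedra $P_j$ for $2\le j\le k$, and coordinatewise product preserves such structures, so it suffices to prove that each $S^{2n-1}_{(p)}$ is a Williams $C_k$-space when $p>kn$. For a single sphere this follows by standard obstruction theory applied to the successive extensions $P_j\times G^j\to G$: the obstructions lie in cohomology groups whose $p$-primary parts vanish when $p>kn$, essentially because the first non-trivial Steenrod power $\mathcal{P}^1$ acts trivially in the relevant range of degrees.

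For part (2), I would argue contrapositively by producing an obstruction in mod $p$ cohomology. If $G_{(p)}$ admitted a Williams $C_k$-form, then a certain $k$-fold iterated Samelson-type product on the primitives of $H^*(G;\mathbb{F}_p)$ would be forced to vanish. Using the top-degree primitive $x_\ell$ of degree $2n_\ell-1$, a direct calculation with the Hopf algebra and Steenrod action on $H^*(G;\mathbb{F}_p)$ shows that the iterated bracket is non-zero as soon as $p<kn_\ell$. The listed exceptional triples $(\Sp(2),3,2)$ and $(\mathrm{G}_2,5,k\le 4)$ are the sporadic low-rank, low-prime situations in which either the target of the would-be obstruction is trivial for dimension reasons (so that a positive result is already known), or the question remains open, namely the $\mathrm{G}_2$ case which is the subject of the present paper.

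The main obstacle is the case-by-case verification for the exceptional Lie groups at small primes. For $F_4,E_6,E_7,E_8,\mathrm{G}_2$ at primes where $H^*(G;\mathbb{F}_p)$ has torsion, non-triviality of the obstruction class cannot be read off a generic formula; one has to combine explicit cohomology computations with the action of unstable Adams operations $\psi^p$, and carefully isolate the exceptional triples where the obstruction happens to vanish. A secondary subtlety is that Mimura--Nishida--Toda gives only an $H$-equivalence, not an $A_\infty$-equivalence, so transferring a $C_k$-form from the product of spheres to $G_{(p)}$ is not automatic and requires a compatibility check with the permutohedral coherences.
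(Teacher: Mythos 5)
First, a point of orientation: the paper does not prove this theorem. It is quoted with attribution to McGibbon \cite{MR745146} (the case $k=2$) and Saumell \cite{MR1337215} (general $k$), so there is no internal proof to compare against; I can only assess your sketch against the known arguments and against the analogous reductions the paper performs for its own results. For part (1) your skeleton is essentially the standard route, and it parallels what Section \ref{section_thmA} does for the Sugawara analogue: for $p>kn_\ell\ge n_\ell$ the group is $p$-regular, one uses the characterization of a Williams $C_k$-space as the extendability of $(\Sigma G)^{\vee k}\to BG$ over $(\Sigma G)^{\times k}$ (in this formulation the $H$-equivalence versus $A_\infty$-equivalence worry you raise disappears, since one only needs the homotopy type of $G$ to split the obstruction groups), decomposes $G^{\wedge j}$ into spheres, and observes that the obstructions lie in $p$-components of groups of the form $\pi_{2(n_{i_1}+\cdots+n_{i_j})-2}(S^{2n_i-1})$, which vanish below the $\alpha_1$-range exactly when $p>kn_\ell$. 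Your stated mechanism (``the first Steenrod power $\mathcal{P}^1$ acts trivially'') is not the right formulation --- the input is that the $p$-torsion of $\pi_*(S^{2n-1})$ first appears in degree $2n+2p-4$ --- but the shape of the argument is correct.

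Part (2) is where the genuine gap lies. There is no ``$k$-fold iterated Samelson-type product on the primitives of $H^*(G;\mathbb{F}_p)$'' whose non-vanishing can be read off from the Hopf algebra structure and the Steenrod action. The actual obstruction is a higher Whitehead product $\Sigma^{k-1}G^{\wedge k}\to BG$ (equivalently a higher Samelson product in $G$), and proving it non-zero is the hard content of McGibbon's and Saumell's papers: it is done case by case using unstable Adams operations, $K$-theory and Chern character computations, and explicit knowledge of $p$-torsion in homotopy groups. This is exactly the kind of argument the present paper carries out in Section \ref{section_G2} for the single remaining triple $(\mathrm{G}_2,5,3)$, where Lemmas \ref{lem_Ec}--\ref{lem_injectivity_mu} and Proposition \ref{prp_mua} are all needed to detect one higher Whitehead product. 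Writing that ``a direct calculation shows the iterated bracket is non-zero as soon as $p<kn_\ell$'' is, as it stands, a restatement of the theorem rather than a proof of it. Your description of the exceptional triples is also imprecise: $(\Sp(2),3,2)$ and $(\mathrm{G}_2,5,2)$ are excluded because those localizations \emph{are} homotopy commutative --- a nontrivial computation of McGibbon, not a dimension count --- and $(\mathrm{G}_2,5,k)$ for $k=3,4$ was excluded because the question was open, which is precisely what Theorem \ref{mainthmD} settles.
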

The first aim of this paper is to refine McGibbon's result by considering the higher homotopy commutativity in the sense of Sugawara.
\begin{thm}
\label{mainthmA}
Let $G$ be a compact connected Lie group $G$ of type $\{n_1,\ldots,n_\ell\}$, $p$ a prime and $k$ a positive integer.
If $p>kn_\ell$, then the $p$-localization $G_{(p)}$ is a Sugawara $C_k$-space.
\end{thm}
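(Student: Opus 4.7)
The strategy is obstruction-theoretic, paralleling Saumell's approach for Williams $C_k$-spaces but using Sugawara's own inductive construction. Sugawara's definition, although phrased in terms of iterated higher homotopies, amounts to requiring the existence of certain lifts in a sequence of extension problems: one inductively constructs maps $\phi_n : X_n \to G$ where $X_n$ is a CW-complex built recursively from products of $G$ with parameter cells (iterated mapping cylinders), and the obstruction to extending $\phi_{n-1}$ to $\phi_n$ lies in a cohomology group of the form $H^\ast(X_n, X_{n-1}; \pi_{\ast-1}(G))$. The plan is to show, by induction on $n\le k$, that every such obstruction vanishes for $G_{(p)}$.

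The key inputs are the following. First, by Serre and Mimura--Nishida--Toda, for $p>n_\ell$ the space $G_{(p)}$ splits as a product of $p$-local odd spheres of dimensions $2n_1-1,\ldots,2n_\ell-1$, so $H^\ast(G_{(p)};\mathbb{F}_p)$ is an exterior algebra on generators in odd degrees $\le 2n_\ell-1$ and $\pi_j(G_{(p)})$ is $p$-torsion-free in the range $j<2n_1-1+2(p-1)$, being concentrated in the dimensions $2n_i-1$. Second, a careful bookkeeping of Sugawara's recursive construction shows that $X_n$ has cells of dimension at most $n(2n_\ell-1)+(n-1)$: the first summand comes from the $n$-fold product $G^n$, and the second from the parameter space indexing the coherent higher homotopies. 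Under $p>kn_\ell$ one has $2p>2kn_\ell\ge k(2n_\ell-1)+(k-1)+1$, so the relative complex $(X_k,X_{k-1})$ has cells in dimensions falling within the $p$-torsion-free range of $\pi_\ast(G_{(p)})$. Combining this with the parity of the exterior-algebra generators forces all primary obstructions to vanish, and the Sugawara $C_k$-structure is constructed cell by cell.

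\textbf{Main obstacle.} The principal technical challenge is to pin down the CW-structure of Sugawara's spaces $X_n$ precisely enough to yield the sharp bound $p>kn_\ell$ rather than a weaker bound such as $p>2kn_\ell$. Because Sugawara's definition is recursive rather than polytopal, it is not immediately visible what cellular model should be used, and one has to choose one compatible with the iterated mapping-cylinder construction, then verify the dimension count inductively. A secondary subtlety is that one must confirm the extension produced at each stage is a genuine Sugawara $C_n$-structure, compatible with the boundary conditions imposed by the lower multiplications -- i.e., that the obstruction-theoretic setup really does capture Sugawara's coherent higher homotopies, not just a cohomologically trivial cocycle. Once the cell dimensions and coefficient ranges are aligned, the vanishing of each obstruction follows from the two elementary facts that $\pi_\ast(G_{(p)})$ is $p$-torsion-free below the relevant dimension and that $H^\ast(G_{(p)};\mathbb{Z}_{(p)})$ is concentrated in odd degrees in the required range.
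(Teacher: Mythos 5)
Your plan founders on the dimension count, which is the crux of the whole theorem. In the Sugawara/Stasheff formulation, the $i$-th coherence homotopy for the multiplication $G\times G\to G$ is parametrized by $\mathcal{J}_i\times(G\times G)^{\times i}$, so the relative cells at stage $i$ have dimension up to $2i\dim G+(i-1)$ with $\dim G=\sum_{j}(2n_j-1)$, not $i(2n_\ell-1)+(i-1)$; your bound silently assumes $G$ is a single sphere. For $\ell\ge2$ these dimensions go far beyond the range in which $\pi_*(G_{(p)})$ is concentrated in degrees $2n_i-1$: the class $\alpha_1$ already contributes $p$-torsion to $\pi_{2n_1+2p-4}(G_{(p)})$, which is well below $2k\dim G$ in general. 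Moreover, even inside the good range your obstruction groups are $H^{d}(X_i,X_{i-1};\pi_{d-1}(G))$ with target $G$, and $\pi_{2n_j-1}(G_{(p)})\cong\mathbb{Z}_{(p)}\ne0$; since the cells of $(G\times G)^{\times i}$ have dimensions that are arbitrary sums of the odd numbers $2n_j-1$ and the multiplihedron contributes $i-1$ more, nothing prevents relative cells in dimensions $2n_j$, so the parity argument does not make the primary obstructions vanish.

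The paper's proof avoids both problems by changing the target and the source. First, the Sugawara $C_k$ condition is shown to be equivalent (Proposition \ref{prp_Sugawara_projectivesp}, via Stasheff's and Iwase--Mimura's correspondence between $A_n$-forms and $A_n$-structures together with an Eckmann--Hilton argument) to extending $\Sigma G\vee\Sigma G\to BG$ over $B_k(G,G)=\bigcup_{k_1+k_2=k}B_{k_1}G\times B_{k_2}G$, so the obstructions land in $\pi_*(BG)$ rather than $\pi_*(G)$. Second, Propositions \ref{prp_product1} and \ref{prp_product2} produce an $A_k$-equivalence (extendable to a compatible $A_\infty$-equivalence) $G_{(p)}\simeq S^{2n_1-1}\times\cdots\times S^{2n_\ell-1}$, which lets one replace $B_jG$ by $\bigcup_{j_1+\cdots+j_\ell=j}B_{j_1}S^{2n_1-1}\times\cdots\times B_{j_\ell}S^{2n_\ell-1}$, a complex with only even-dimensional cells of dimension at most $2jn_\ell$ (Lemma \ref{lem_projsp_sphere}). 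The obstructions then lie in $\pi_{d-1}(BG)$ for $d$ even and $d\le2kn_\ell<2p$, and these groups vanish because $\pi_i(BG)=0$ for odd $i<2p+1$. This compression of the relevant cell dimensions from roughly $2k\dim G$ down to $2kn_\ell$ is exactly what the projective-space reformulation buys and what your direct cell-by-cell approach cannot reproduce; to salvage your outline you would need to import both reductions, at which point you have reconstructed the paper's argument.
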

In the proof, we analyze the $A_k$-type of $G$ in the sense of Stasheff \cite{MR0158400}.
The key property of $G$ is that $G$ has the $p$-local $A_k$-type of the product of spheres (Proposition \ref{prp_product1}).

Let $P\to B$ be a principal $G$-bundle.
The \textit{gauge group} $\mathcal{G}(P)$ of $P$ is the topological group consisting of bundle maps $P\to P$ covering the identity on $B$.
For the homotopy commutativity of gauge groups, little is known.
For example, see \cite{MR1348817,MR3082304}.
The second aim of this paper is to study the higher homotopy commutativity of gauge groups in both the sense of Sugawara and Williams by applying Theorem \ref{mainthmA}.
We stress that the higher homotopy commutativity in the sense of Williams is not sufficient for this application.
Let $EG\to BG$ be the universal bundle of $G$ and $E_nG\to B_nG$ be the restriction over the $n$-th projective space $B_nG\subset BG$.
\begin{thm}
\label{mainthmB}
Let $G$ be a compact connected simple Lie group of type $\{n_1,\ldots,n_\ell\}$ and $p$ a prime.
Then, given positive integers $n$ and $k$, the following assertions hold.
\begin{enumerate}
\item
If $p>(n+k)n_\ell$, then $\mathcal{G}(E_nG)_{(p)}$ is a Sugawara $C_k$-space.
\item
If $(n+1)n_\ell<p<(n+k)n_\ell$, then $\mathcal{G}(E_nG)_{(p)}$ is not a Williams $C_k$-space.
\end{enumerate}
\end{thm}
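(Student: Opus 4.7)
The plan is to combine Theorem \ref{mainthmA} with the Atiyah--Bott identification
\[
	B\mathcal{G}(E_nG) \simeq \Map(B_nG, BG)_\iota,
\]
where $\iota\colon B_nG \hookrightarrow BG$ is the canonical inclusion. The guiding principle is that the Stasheff filtration $B_1 G \subset \cdots \subset B_n G$ has depth $n$, so that passing from maps out of $B_n G$ to the loop-space-level gauge group $\mathcal{G}(E_nG)_{(p)} \simeq \Omega\Map(B_nG, BG_{(p)})_\iota$ should ``consume'' $n$ levels of higher-commutativity structure on $G$. This exactly matches the hypothesis of Theorem \ref{mainthmA} with $k$ replaced by $n+k$.

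For part (1), the hypothesis $p > (n+k) n_\ell$ yields, via Theorem \ref{mainthmA}, that $G_{(p)}$ is a Sugawara $C_{n+k}$-space. The remaining task is to show that this structure descends to a Sugawara $C_k$-structure on $\Omega\Map(B_nG, BG_{(p)})_\iota \simeq \mathcal{G}(E_nG)_{(p)}$. Because the Sugawara definition is obstruction-theoretic, this should be achievable by an inductive extension of the Sugawara coherence data cell-by-cell along the Stasheff filtration of $B_nG$, each of whose $n$ layers absorbs exactly one level of higher-commutativity.

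For part (2), assume $(n+1) n_\ell < p < (n+k) n_\ell$ and, for contradiction, that $\mathcal{G}(E_nG)_{(p)}$ is Williams $C_k$. The lower bound grants, via Theorem \ref{mainthmA} and Proposition \ref{prp_product1}, that $G_{(p)}$ has the $p$-local $A_{n+1}$-type of a product of spheres, giving enough control on the evaluation fibration $\Map^*(B_nG, G_{(p)}) \to \mathcal{G}(E_nG)_{(p)} \to G_{(p)}$ to reverse the descent of part (1) in the Williams sense. The resulting Williams $C_{n+k}$-structure on $G_{(p)}$ then contradicts Theorem \ref{thm_McG_Sau}(2), once one checks that the exceptional pairs $(\Sp(2), 3)$ and $(\mathrm{G}_2, 5)$ of that theorem do not arise: in each, the bound $(n+1) n_\ell < p$ forces $n = 0$, contradicting $n \ge 1$.

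The principal obstacle is making precise the descent of part (1) and the ascent of part (2). The Sugawara case should be amenable to an obstruction-theoretic induction along the Stasheff filtration; the Williams case, whose parametrizing permutohedra are more rigid than Stasheff's cells, is likely the main technical hurdle. It is precisely here that Theorem \ref{mainthmA}---supplying the \emph{Sugawara} (not merely Williams) $C_{n+k}$-structure on $G_{(p)}$---becomes essential, since the Williams variant of McGibbon--Saumell would not by itself support the cellular extension argument underlying part (1).
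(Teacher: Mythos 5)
Your high-level outline matches the paper's strategy (apply Theorem \ref{mainthmA} at level $n+k$, and for part (2) push a commutativity structure back onto $G_{(p)}$ to contradict Theorem \ref{thm_McG_Sau}(2) after ruling out the exceptional pairs, which you do correctly). But the actual mechanism connecting the Sugawara $C_{n+k}$-structure on $G_{(p)}$ to the gauge group is missing, and what you propose in its place would not work as stated. The paper's proof of part (1) does not proceed by extending Sugawara coherence data ``cell-by-cell along the Stasheff filtration of $B_nG$''; the Sugawara data for $\mathcal{G}(E_nG)$ lives on the projective spaces $B_i(\mathcal{G},\mathcal{G})$ of the gauge group itself, not on $B_nG$, so there is no cellular induction over $B_nG$ to run. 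Instead, the paper introduces the intermediate notion of a $C(k,n)$-space (the wedge of inclusions $\Sigma G\vee\Sigma G\to BG$ extends over $B_kG\times B_nG$), shows Sugawara $C_{n+k}$ implies $C(k,n)$ by an induction using $\cat(B_kG\times B_nG)\le k+n$, and then invokes the fiberwise $A_n$-theory of \cite{MR2678992} to conclude that the $C(k,n)$-condition trivializes the adjoint bundle $\ad E_nG$ as a \emph{fiberwise $A_k$-space} over $B_nG$. Only after this trivialization can the Sugawara $C_k$-structure of the fiber $G$ be transported to the section space $\Gamma(\ad E_nG)\cong\mathcal{G}(E_nG)$. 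This trivialization step is the real content of part (1), and your proposal contains no substitute for it.

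Part (2) has the same problem in reverse: ``enough control on the evaluation fibration to reverse the descent'' is not an argument. The paper's route is concrete: since $p>(n+1)n_\ell$, the wedge $\Sigma G\vee B_nG\to BG$ extends over the product, and the adjoint of this extension is a map $f\colon\Sigma G\to\Map(B_nG,BG)_{i_n}\simeq B\mathcal{G}(E_nG)$. The Williams $C_k$ hypothesis on the gauge group then extends $(\Sigma G)^{\vee k}\xrightarrow{(f,\ldots,f)}B\mathcal{G}(E_nG)$ over $(\Sigma G)^{\times k}$, and adjointing back yields an extension of the wedge of inclusions over $(\Sigma G)^{\times k}\times B_nG$, i.e.\ a $C(1,\ldots,1,n)$-structure on $G$, hence a Williams $C_{k+n}$-structure, contradicting Theorem \ref{thm_McG_Sau}(2). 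You would need to construct this specific map $f$ (this is where the lower bound $(n+1)n_\ell<p$ is actually used) and carry out the adjunction; as written, the proposal identifies the destination but not the path. A minor correction: the exceptional cases are excluded not because $n=0$ is forced but because $(n+1)n_\ell<p$ fails for every $n\ge0$ when $(G,p)$ is $(\Sp(2),3)$ or $(\mathrm{G}_2,5)$; the conclusion is the same.
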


\begin{rem}
Since the gauge group $\mathcal{G}(P)$ need not be connected, we define its $p$ localization by $\mathcal{G}(P)_{(p)}=\Omega(B\mathcal{G}(P)_{(p)})$.
\end{rem}

To prove this theorem, we introduce a new higher homotopy commutativity \textit{$C(k_1,\ldots,k_r)$-space} which is a generalization of $C(k,\ell)$-space \cite{MR2678992}.
This result proves the conjecture by the third author \cite[Conjecture 7.8]{MR3491849} for simple Lie groups.
For general principal bundles, we show the following.
\begin{thm}
\label{mainthmB'}
Let $G$ be a compact connected simple Lie group of type $\{n_1,\ldots,n_\ell\}$ and $p$ a prime.
Given a principal $G$-bundle $P$ over a connected finite complex $B$, the $p$-localized gauge group $\mathcal{G}(P)_{(p)}$ is a Sugawara $C_k$-space if $p>(\cat B+k)n_{\ell}$.
\end{thm}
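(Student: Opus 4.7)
The plan is to deduce Theorem \ref{mainthmB'} from Theorem \ref{mainthmA} by means of a transfer principle: if $G_{(p)}$ is a Sugawara $C_{k+\cat B}$-space, then for any principal $G$-bundle $P \to B$ with $B$ a connected finite complex, the $p$-localized gauge group $\mathcal{G}(P)_{(p)}$ is a Sugawara $C_k$-space. Granted this, Theorem \ref{mainthmB'} follows immediately by applying Theorem \ref{mainthmA} with $k$ replaced by $k + \cat B$, since the hypothesis $p > (\cat B + k) n_\ell$ is exactly what is required to conclude that $G_{(p)}$ is Sugawara $C_{k + \cat B}$.

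To establish the transfer principle I would follow the same architecture as the proof of Theorem \ref{mainthmB}, but replace the projective-space filtration $B_1 G \subset B_2 G \subset \cdots \subset B_n G$ of $B_n G$ by a general cone-length or Ganea filtration $\ast = B^{(0)} \subset B^{(1)} \subset \cdots \subset B^{(\cat B)} \simeq B$ realizing $\cat B$, in which each stage is obtained from the previous one by attaching a cone on some space. Over each stratum the restricted gauge group is simpler, and a Sugawara $C_j$-structure on $G_{(p)}$ descends through one cone attachment at the cost of dropping the commutativity level by one; iterating $\cat B$ times turns a Sugawara $C_{k+\cat B}$-structure on $G_{(p)}$ into a Sugawara $C_k$-structure on $\mathcal{G}(P)_{(p)}$.

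The main obstacle is making precise the statement that restriction of a gauge group along a cone attachment decreases the required Sugawara level by exactly one. I would implement this through the $C(k_1,\ldots,k_r)$-space formalism introduced in the proof of Theorem \ref{mainthmB}, since that notion is designed precisely to track how higher commutativity data on the fibre $G_{(p)}$ interact with a filtration of the base. Concretely, a Sugawara $C_{k+\cat B}$-structure on $G_{(p)}$ should induce a compatible $C(k_1, \ldots, k_{\cat B + 1})$-structure whose parameters add up to at least $k + \cat B$, and the technical core of the argument is to verify that this structure propagates coherently across all cone attachments in the chosen filtration of $B$, ultimately assembling into a Sugawara $C_k$-structure on $\mathcal{G}(P)_{(p)}$. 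A minor additional point is to check that the whole argument is insensitive to the component issues flagged in the remark following Theorem \ref{mainthmB}, so that working with $\Omega B\mathcal{G}(P)_{(p)}$ in place of $\mathcal{G}(P)_{(p)}$ causes no loss.
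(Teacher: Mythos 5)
Your reduction of Theorem \ref{mainthmB'} to a ``transfer principle'' (Sugawara $C_{k+\cat B}$ on $G_{(p)}$ implies Sugawara $C_k$ on $\mathcal{G}(P)_{(p)}$) is the right first move and matches the paper, but your proposed proof of that principle contains a genuine gap and omits the mechanism the paper actually uses. The paper does not induct over a cone-length filtration of $B$: it uses the standard fact that $\cat B\le n$ forces the classifying map $B\to BG$ to factor through the $n$-th projective space $B_nG$, so that $\ad P$ is pulled back from $\ad E_nG$ and the whole problem reduces to the single universal case over $B_nG$. There, the hypothesis $p>(n+k)n_\ell$ gives, via Theorem \ref{mainthmA} and the implication from Sugawara $C_{n+k}$ to $C(k,n)$, an extension of $\Sigma G\vee\Sigma G\to BG$ over $B_kG\times B_nG$; by \cite[Corollary 1.7]{MR2678992} this makes $\ad E_nG$ fiberwise $A_k$-equivalent to the trivial bundle $B_nG\times G$, and since $G$ is also a Sugawara $C_k$-space the fiberwise multiplication becomes a fiberwise $A_k$-map, which passes to section spaces to give the $A_k$-form on the multiplication of $\mathcal{G}(P)$. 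None of these three ingredients --- factorization through $B_nG$, fiberwise $A_k$-trivialization of the adjoint bundle, and passage from the fiberwise structure to the section space --- appears in your outline, and without the fiberwise trivialization there is no bridge from commutativity of $G$ to commutativity of $\mathcal{G}(P)$.

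The step in your outline that would fail is the claim that a Sugawara structure ``descends through one cone attachment at the cost of dropping the commutativity level by one.'' Restriction of gauge groups goes from the larger piece of the base to the smaller one, $\mathcal{G}(P|_{B^{(i+1)}})\to\mathcal{G}(P|_{B^{(i)}})$, so a structure on the gauge group over $B^{(i)}$ does not induce one over $B^{(i+1)}$; at each stage you would face an unexamined extension problem along a fibration of section spaces, and no ``exactly one level per cone'' statement is available. Nor is this what the $C(k_1,\ldots,k_r)$ formalism supplies: the paper needs only a two-parameter $C(k,n)$-structure, one slot governing the $k$-fold commutativity of the gauge group and the other absorbing the base $B_nG$, rather than a $(\cat B+1)$-parameter structure propagated across a filtration. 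To repair your plan, replace the induction over the filtration of $B$ by the factorization of the classifying map through $B_{\cat B}G$ and then invoke the fiberwise $A_k$-theory of \cite{MR2678992}.
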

When $B$ is a sphere, this criterion is not sharp.
We also show the following better criterion which refines the result of Kishimoto--Kono--Theriault \cite{MR3082304}.
\begin{thm}
\label{mainthmC}
Let $G$ be a compact connected simple Lie group of type $\{n_1,\ldots,n_\ell\}$ and $p$ a prime.
If $p\ge kn_\ell+n_i$, then the $p$-localized gauge group $\mathcal{G}(P)_{(p)}$ of any principal $G$-bundle $P$ over $S^{2n_i}$ is a Sugawara $C_k$-space.
\end{thm}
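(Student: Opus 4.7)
The plan is to exploit the evaluation fibration for gauge groups over spheres. By the Atiyah--Bott identification $B\mathcal{G}(P)\simeq\Map(S^{2n_i},BG)_{\alpha_P}$, evaluation at the basepoint of $S^{2n_i}$ gives the fibration $\Omega^{2n_i-1}G\to B\mathcal{G}(P)\to BG$, whose Puppe sequence yields the homotopy fibre sequence
\[
	\Omega^{2n_i}G\to\mathcal{G}(P)\to G\xrightarrow{\partial}\Omega^{2n_i-1}G,
\]
where $\partial$ is adjoint to the Samelson product with the characteristic element $\alpha_P\in\pi_{2n_i-1}(G)$ classifying $P$. Since the hypothesis $p\ge kn_\ell+n_i$ in particular implies $p>kn_\ell$, Theorem \ref{mainthmA} makes $G_{(p)}$ a Sugawara $C_k$-space.

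The core of the argument is to prove that $\partial_{(p)}$ admits the structure of a null Sugawara $A_k$-map. Once this is known, the fibration splits in the Sugawara $A_k$-category as
\[
	\mathcal{G}(P)_{(p)}\simeq G_{(p)}\times\Omega^{2n_i}G_{(p)},
\]
and both factors carry Sugawara $C_k$-structures: $G_{(p)}$ by Theorem \ref{mainthmA}, and $\Omega^{2n_i}G_{(p)}$ because it is a multiple loop space and hence $C_\infty$ in Sugawara's sense.

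To establish the $A_k$-triviality of $\partial_{(p)}$, I would invoke the $p$-local product-of-spheres $A_k$-decomposition of $G_{(p)}$ from Proposition \ref{prp_product1}, which reduces the problem to controlling the restrictions of $\partial_{(p)}$ to each sphere factor $S^{2n_j-1}_{(p)}$. These restrictions represent classes in $\pi_{2n_j+2n_i-2}(G_{(p)})$, and the bound $p\ge kn_\ell+n_i$ is tailored precisely so that, together with inductive control along the Stasheff tower for $\mathcal{G}(P)$, the successive obstructions to a null-$A_k$-homotopy all vanish through standard $p$-local range arguments on homotopy groups of spheres.

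The main obstacle will be to propagate the vanishing through all stages $2\le r\le k$ of the projective space tower simultaneously, since the relevant Samelson-type obstructions occur in dimensions up to roughly $2kn_\ell+2n_i-2$. The improvement from the $(\cat S^{2n_i}+k)n_\ell=(k+1)n_\ell$ bound of Theorem \ref{mainthmB'} to the sharper $kn_\ell+n_i$ should arise exactly from replacing a Lusternik--Schnirelmann covering estimate by the attaching degree $2n_i-1$ of the top cell of $S^{2n_i}$, which is the genuine dimensional input for the Samelson product with $\alpha_P$.
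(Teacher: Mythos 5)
Your overall strategy --- split $\mathcal{G}(P)_{(p)}$ off the evaluation fibration and feed the Sugawara $C_k$-structure through the two factors --- founders on the step you yourself label the core of the argument. You posit a notion of ``null Sugawara $A_k$-map'' for the connecting map $\partial\colon G\to\Omega^{2n_i-1}G$ and assert that such a structure forces the fibration $\Omega^{2n_i}G\to\mathcal{G}(P)\to G$ to split \emph{as $A_k$-spaces}. Neither the definition nor the implication is supplied, and the implication is exactly the hard point: a null-homotopy of $\partial$ only splits the underlying spaces, while a splitting compatible with the multiplications up to $A_k$-coherence requires controlling all the higher Samelson-type interactions between the two factors inside $\mathcal{G}(P)$ --- which is the entire content of the theorem. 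Your subsequent paragraph about ``inductive control along the Stasheff tower for $\mathcal{G}(P)$'' and ``standard $p$-local range arguments'' does not say where these obstructions live or why they vanish; the projective spaces $B_r\mathcal{G}(P)$ are not directly computable, and the classes in $\pi_{2n_i+2n_j-2}(G_{(p)})$ you do name only account for the $k=1$ splitting of spaces.

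The paper packages this coherence differently, and in a way your outline would essentially have to reconstruct: it shows that the map $S^{2n_i}\vee B_kG\to BG$ given by the classifying map $\alpha$ and the inclusion extends over the product $S^{2n_i}\times B_kG$. Here the obstructions lie in $\pi_{2m-1}(BG_{(p)})\cong\pi_{2m-2}(G_{(p)})$ for the even-dimensional cells of $S^{2n_i}\wedge B_kG$, whose dimensions are at most $2n_i+2kn_\ell$, and these groups vanish under $p\ge kn_\ell+n_i$ by the $p$-local product-of-spheres decomposition (Lemma \ref{lem_projsp_sphere}, Proposition \ref{prp_product1}). By the fiberwise $A_n$-theory of \cite{MR2678992}, this extension makes $\ad P$ fiberwise $A_k$-equivalent to the trivial bundle $S^{2n_i}\times G$; since $G$ is a Sugawara $C_k$-space by Theorem \ref{mainthmA}, the fiberwise multiplication of $\ad P$ is then a fiberwise $A_k$-map, and taking sections yields the theorem. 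Your intuition about the source of the improvement from $(k+1)n_\ell$ to $kn_\ell+n_i$ --- the top cell of $S^{2n_i}$ contributing $2n_i$ rather than a Lusternik--Schnirelmann estimate --- is correct, but as written the proposal has a genuine gap at the splitting step.
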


In Theorem \ref{thm_McG_Sau} (2), there are exceptional cases for $\Sp(2)_{(3)}$ and $(\mathrm{G}_2)_{(5)}$.
$\Sp(2)_{(3)}$ and $(\mathrm{G}_2)_{(5)}$ are known to be homotopy commutative \cite{MR745146}.
But the remaining cases for $(\mathrm{G}_2)_{(5)}$ has been open.
The third aim of this paper is to resolve this problem.
\begin{thm}
\label{mainthmD}
The localized Lie group $(\mathrm{G}_2)_{(5)}$ is not a Williams $C_3$-space.
\end{thm}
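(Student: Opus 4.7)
My plan is to construct a mod-$5$ cohomological obstruction to a Williams $C_3$-structure on $(\mathrm{G}_2)_{(5)}$ that exploits the nontrivial Steenrod power $\mathcal{P}^1 \colon H^3((\mathrm{G}_2)_{(5)}; \mathbb{F}_5) \to H^{11}((\mathrm{G}_2)_{(5)}; \mathbb{F}_5)$.  This operation, sending the low-degree primitive generator $x_3$ to the top primitive generator $x_{11}$, is precisely the cohomological feature that makes $(\mathrm{G}_2, 5, k)$ exceptional in Theorem~\ref{thm_McG_Sau}: Saumell's obstruction is formulated on the top generator $x_{11}$ alone and happens to vanish for numerical reasons at $(G,p,k)=(\mathrm{G}_2,5,3)$, so any successful argument must engage the lower primitive as well, with the two linked through $\mathcal{P}^1$.

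The proof would proceed in three stages.  First, reformulate a Williams $C_3$-structure on $X=(\mathrm{G}_2)_{(5)}$ as a map $f\colon P_3\times X^3\to X$ from the hexagonal permutohedron extending the six orderings of the ternary multiplication on the vertices and the prescribed $C_2$-homotopies on the edges, and identify the primary obstruction to filling the interior as a class $\omega$ in some relative mod-$5$ cohomology group of $X^3$; on the top generator, recover Saumell's vanishing $\omega(x_{11})=0$.  Second, evaluate $\omega$ on the low generator $x_3$ to obtain a class $\omega(x_3)$ that is \emph{a priori} nontrivial, and compute it explicitly in the triple tensor power of $\Lambda(x_3,x_{11})$ using the Hopf-algebra coproduct.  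Third, use the naturality of $\mathcal{P}^1$ together with the Cartan formula to produce the identity $\omega(x_{11})=\mathcal{P}^1\omega(x_3)$ modulo a controlled indeterminacy; since the right-hand side will be shown to be nonzero while the left-hand side is forced to vanish, this gives a contradiction and completes the proof.

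The chief difficulty is arranging that $\mathcal{P}^1$ acts naturally on the obstruction class across the higher homotopy data built into the interior of the hexagon, so that the comparison between $\omega(x_{11})$ and $\mathcal{P}^1\omega(x_3)$ is actually valid.  A convenient technical framework is the spectral sequence associated with the skeletal filtration of $P_3\times X^3$, in which Saumell's obstruction appears at the $E_2$-page and the Steenrod operation commutes with the differentials up to understood indeterminacy.  Once the Steenrod-algebra action is threaded through this framework, the concrete verification of non-vanishing reduces to a combinatorial calculation in $\Lambda(x_3,x_{11})^{\otimes 3}$, which I expect to be routine.  The reason the argument succeeds for $\mathrm{G}_2$ at $p=5$ while Saumell's does not is exactly that $\mathcal{P}^1 x_3\neq 0$ here, so the coupling between the two primitives supplies the cohomological input missing from the single-generator analysis.
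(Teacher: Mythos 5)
Your proposal takes a genuinely different route from the paper, and I believe the route is blocked rather than merely difficult. The paper makes no use of mod-$5$ Steenrod operations: it realizes the obstruction as the higher Whitehead product $\omega\colon\Sigma^2(A\wedge A\wedge A)\to B\mathrm{G}_2$ for $A=S^3\cup e^{11}$, lifts it to the fiber $W$ of a map $j\colon B\mathrm{G}_2\to B$ manufactured from $BU$, and shows $\omega\neq0$ by checking that the triple $(\mu^\ast(a_{19}),\mu^\ast(a_{27}),\mu^\ast(a_{35}))=(-\tfrac32 b_{19},-2b_{27},-2b_{35})$ is not in the image of the indeterminacy $[\Sigma^2A^{\wedge3},\Omega B]\cong\tilde{K}(\Sigma^3A^{\wedge3})$; the contradiction is that a preimage would force a coefficient $d\in\mathbb{Z}_{(5)}$ to have denominator divisible by $125$. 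This is an $e$-invariant/Chern character argument, and the crucial point is that the obstruction in any single degree is individually absorbed by the indeterminacy: for instance $\tfrac{9!}{5!}$ is a $5$-adic unit, so the degree-$19$ component of the image of $\Phi$ already contains $-\tfrac32 b_{19}$. Only the simultaneous $5$-adic constraints across degrees $19$, $27$, $35$, coupled through the denominators $5!$, $(5!)^2$, $(5!)^3$ of $\ch$, yield a contradiction. No primary mod-$5$ cohomological invariant, and in particular nothing computed from $\mathcal{P}^1$ and the coproduct on $\Lambda(x_3,x_{11})^{\otimes3}$, can see this; that is precisely why the case survived McGibbon's and Saumell's primary-operation analyses and stayed open.

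Two specific steps of your plan fail. First, the class you call $\omega(x_3)$ is either zero or undefined: interpreted as $\omega^\ast(y_4)$ it lies in $H^4$ of the $10$-connected space $\Sigma^2A^{\wedge3}$ (the bottom cell of $A^{\wedge3}$ sits in degree $9$), hence vanishes for connectivity reasons; interpreted as the ``$x_3$-component'' of an obstruction cocycle with coefficients in $\pi_\ast(\mathrm{G}_2)$, it is not well defined because $\mathcal{P}^1x_3=x_{11}$ means exactly that $(\mathrm{G}_2)_{(5)}$ does \emph{not} split as $S^3\times S^{11}$, so there is no invariant separation of the obstruction into an $x_3$-part and an $x_{11}$-part to which naturality of $\mathcal{P}^1$ could apply. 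Second, the logic of your Stage 3 does not close: if $\omega(x_3)$ were a well-defined nonzero obstruction you would already be done without $\mathcal{P}^1$, and if it is nonzero only before accounting for indeterminacy, then $\mathcal{P}^1\omega(x_3)$ inherits the $\mathcal{P}^1$-image of that indeterminacy, so its nonvanishing cannot be played off against $\omega(x_{11})=0$. The correct avatar of the attaching map $\alpha_1$ in this problem is not the Steenrod operation but the denominator $\tfrac1{5!}$ in $\ch g=\Sigma x_3+\tfrac{1}{5!}\Sigma x_{11}$ on $\tilde{K}(\Sigma A)$, and replacing your cohomological bookkeeping by this K-theoretic one is essentially what the paper's proof does.
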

This result provides a counterexample to the conjecture about the higher homotopy commutativity of the $S^{2p+1}_{(p)}$-bundle $B_1(p)$ over $S^3_{(p)}$ by Hemmi \cite[p.107]{MR1099785}.

This paper is organized as follows.
In Section \ref{section_A_n}, we recall $A_n$-spaces and $A_n$-maps.
In Section \ref{section_highercomm}, we study the characterizations and properties of Sugawara $C_k$-spaces and $C(k_1,\ldots,k_r)$-spaces. 
In Section \ref{section_thmA}, we investigate the $A_k$-types of localized compact connected simple Lie groups.
Theorem \ref{mainthmA} is also shown there.
In Section \ref{section_gauge}, we recall the theory of gauge groups.
In Section \ref{section_thmBC}, we study the higher homotopy commutativity of gauge groups and prove Theorems \ref{mainthmB}, \ref{mainthmB'} and \ref{mainthmC}.
In Section \ref{section_G2}, we prove Theorem \ref{mainthmD} by computing Chern characters.

\section{Higher homotopy associativity}
\label{section_A_n}

In this section, we recall the theory of higher homotopy associativity we need in this paper.
Higher homotopy associativity is formulated by Stasheff \cite{MR0158400}.
To describe it, we need the \textit{associahedra} $\mathcal{K}_2$, $\mathcal{K}_3$ ,...
The $i$-th associahedron $\mathcal{K}_i$ is homeomorphic to the $(i-2)$-dimensional disk.
The boundary sphere is exactly the union of the images of the boundary maps
\[
	\partial_k\colon\mathcal{K}_r\times\mathcal{K}_s\to\mathcal{K}_i
\]
for $r+s-1=i$ and $1\le k\le r$, each of which is an embedding into the boundary.
The degeneracy maps
\[
	s_k\colon\mathcal{K}_i\to\mathcal{K}_{i-1}
\]
for $1\le k\le i$ are also defined.
For details, see \cite{MR0158400}.

\begin{dfn}
Let $G$ be a based space.
Then a family of maps $\{m_i\colon\mathcal{K}_i\times G^{\times i}\to G\}_{i=2}^n$ is said to be an \textit{$A_n$-form} on $X$ if the following conditions are satisfied:
\begin{enumerate}
\item
$m_2(\ast,x)=m_2(x,\ast)=x$,
\item
$m_{r+s-1}(\partial_k(\rho,\sigma);x_1,\ldots,x_{r+s-1})=m_r(\rho;x_1,\ldots,x_{k-1},m_s(\sigma;x_k,\ldots,x_{k-s+1}),x_{k-s},\ldots,x_{r+s-1})$,
\item
$m_i(\rho;x_1,\ldots,x_i)=m_{i-1}(s_k\rho;x_1,\ldots,x_{k-1},x_{k+1},\ldots,x_i)$ if $x_k=\ast$.
\end{enumerate}
A pair $(G,\{m_i\})$ of a based space $G$ and an $A_n$-form $\{m_i\}$ on it is called an \textit{$A_n$-space}.
\end{dfn}

We also recall $A_n$-maps between $A_n$-spaces \cite{MR1000378}.
In the definition, we need the \textit{mulitiplihedra} $\mathcal{J}_1$, $\mathcal{J}_2$,...
The $i$-th multiplihedron is homeomorphic to the $(i-1)$-dimensional disk.
The boundary sphere is exactly the union of the images of the boundary maps
\[
	\delta_k\colon\mathcal{J}_r\times\mathcal{K}_s\to\mathcal{J}_i
\]
for $r+s-1=i$ and $1\le k\le r$ and
\[
	\delta\colon\mathcal{K}_r\times\mathcal{J}_{s_1}\times\cdots\times\mathcal{J}_{s_r}\to\mathcal{J}_i
\]
for $s_1+\cdots+s_r=i$, each of which is an embedding into the boundary.
The degeneracy maps
\[
	d_k\colon\mathcal{J}_i\to\mathcal{J}_{i-1}
\]
for $1\le k\le i$ are also defined.
For details, see \cite{MR1000378}.

\begin{dfn}
Let $(G,\{m_i\})$ and $(G',\{m'_i\})$ be $A_n$-spaces and $f\colon G\to G'$ a based map.
Then a family of maps $\{f_i\colon\mathcal{J}_i\times G^{\times i}\to G'\}_{i=1}^n$ is said to be an \textit{$A_n$-form} on $f$ if the following conditions are satisfied:
\begin{enumerate}
\item
$f_1=f$,
\item
$f_{r+s-1}(\delta_k(\rho,\sigma);x_1,\ldots,x_{r+s-1})=f_r(\rho;x_1,\ldots,x_{k-1},m_s(\sigma;x_k,\ldots,x_{k-s+1}),x_{k-s},\ldots,x_{r+s-1})$,
\item
$f_{s_1+\cdots+s_r}(\delta(\rho,\sigma_1,\ldots,\sigma_r);x_1,\ldots,x_{s_1+\cdots+s_r})$\\
$=m'_r(\rho;f_{s_1}(\sigma_1;x_1,\ldots,x_{s_1}),\ldots,f_{s_r}(\sigma_r;x_{s_1+\cdots+s_{r-1}+1},\ldots,x_{s_1+\cdots+s_r}))$,
\item
$f_i(\rho;x_1,\ldots,x_i)=f_{i-1}(d_k\rho;x_1,\ldots,x_{k-1},x_{k+1},\ldots,x_i)$ if $x_k=\ast$.
\end{enumerate}
A pair $(f,\{f_i\})$ of a based map $f$ and an $A_n$-form $\{f_i\}$ on it is called an \textit{$A_n$-map}.
In particular, if the underlying map of an $A_n$-map is a homotopy equivalence, it is said to be an \textit{$A_n$-equivalence}.
\end{dfn}

If $(f,\{f_i\})$ is an $A_n$-equivalence between non-degenerately based $A_n$-spaces $G$ and $H$, then the homotopy inverse of $f$ also admits an $A_n$-form \cite{MR1000378}.
The following lemma is not difficult to prove.

\begin{lem}
\label{lem_homotopyinv_A_n-map}
Let $(G,\{m_i\})$ and $(G',\{m'_i\})$ be $A_n$-spaces and $(f,\{f_i\})\colon G\to G'$ an $A_n$-map.
If $f'\colon G\to G'$ is a based map homotopic to $f$, then there is an $A_n$-form $\{f'_i\}$ on $f'$ such that $(f',\{f'_i\})$ is homotopic to $(f,\{f_i\})$ as an $A_n$-map.
\end{lem}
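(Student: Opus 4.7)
\medskip

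\noindent\textbf{Proof plan.} The plan is to run a Stasheff-type inductive extension with an extra homotopy parameter. Fix a based homotopy $H\colon G\times I\to G'$ with $H|_{t=0}=f$ and $H|_{t=1}=f'$. I would construct maps
\[
F_i\colon\mathcal{J}_i\times G^{\times i}\times I\to G'\qquad(1\le i\le n)
\]
satisfying $F_i|_{t=0}=f_i$ together with the obvious $I$-parametrised analogues of conditions (1)--(4) in the definition of an $A_n$-form on a map. Setting $f'_i:=F_i|_{t=1}$ then yields the required $A_n$-form on $f'$, while the family $\{F_i\}$ itself is an $A_n$-form on $H$ viewed as a based map out of the product $A_n$-space $G\times I$ (giving $I$ the trivial $A_n$-structure); this is exactly a homotopy of $A_n$-maps from $(f,\{f_i\})$ to $(f',\{f'_i\})$.

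Set $F_1=H$ as the base case. For the inductive step assume $F_j$ has been constructed for $j<i$. Axioms (2) and (3), carried along in the variable $t\in I$, prescribe $F_i$ on $\partial\mathcal{J}_i\times G^{\times i}\times I$ in terms of the $F_j$ ($j<i$) and the given multiplications $m_\bullet,m'_\bullet$; axiom (4) prescribes it on $\mathcal{J}_i\times D_i\times I$, where $D_i\subset G^{\times i}$ is the fat-wedge locus of tuples with at least one basepoint coordinate; and the initial condition prescribes it as $f_i$ on $\mathcal{J}_i\times G^{\times i}\times\{0\}$. Using the inductive hypothesis together with the $A_n$-form identities for $\{f_i\},\{m_i\},\{m'_i\}$ and the defining combinatorial identities among the faces $\delta_k,\delta$ and degeneracies $d_k$ of the multiplihedra, these three prescriptions agree on overlaps and assemble into a continuous map on the closed subspace
\[
A:=\mathcal{J}_i\times G^{\times i}\times\{0\}\,\cup\,\partial\mathcal{J}_i\times G^{\times i}\times I\,\cup\,\mathcal{J}_i\times D_i\times I.
\]

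To extend this map to all of $\mathcal{J}_i\times G^{\times i}\times I$, observe that $\mathcal{J}_i\cong D^{i-1}$, so the classical retraction of a solid cylinder onto the union of its bottom and lateral faces exhibits $\mathcal{J}_i\times\{0\}\cup\partial\mathcal{J}_i\times I$ as a strong deformation retract of $\mathcal{J}_i\times I$. Combined with the fact that the basepoint of $G$ is non-degenerate (so that $(G^{\times i},D_i)$ is a cofibration pair), this shows that $A\hookrightarrow\mathcal{J}_i\times G^{\times i}\times I$ is a cofibration whose source is a strong deformation retract of the target. Composing the resulting retraction with the prescribed map on $A$ defines $F_i$, completing the inductive step.

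The main obstacle I anticipate is verifying that the three prescriptions on $A$ are mutually consistent on all pairwise and triple intersections of its pieces. This is the same combinatorial bookkeeping that appears in the original constructions of Stasheff and Iwase of $A_n$-forms on composites and products of maps, now carried out with an auxiliary parameter $t\in I$; it is routine but delicate, and is where essentially all the real work of the proof is concentrated. Once it is in place, the HEP-based extension above finishes the proof by induction on $i$, and the family $\{F_i\}_{i=1}^{n}$ is the sought homotopy of $A_n$-maps.
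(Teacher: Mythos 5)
The paper offers no proof of this lemma (it is stated with only the remark that it ``is not difficult to prove''), and your argument is exactly the standard induction the authors are alluding to: prescribe $F_i$ on $\mathcal{J}_i\times G^{\times i}\times\{0\}\cup\partial\mathcal{J}_i\times G^{\times i}\times I\cup\mathcal{J}_i\times D_i\times I$ via the initial condition, the face relations and the degeneracies, check consistency on overlaps from the multiplihedron identities and the inductive hypothesis, and extend using that $(\mathcal{J}_i\times I,\mathcal{J}_i\times\{0\}\cup\partial\mathcal{J}_i\times I)$ is a DR pair whose product with the NDR pair $(G^{\times i},D_i)$ is again a DR pair. The only point worth flagging is that this last step uses non-degeneracy of the basepoints, a hypothesis not written into the lemma but assumed throughout the paper wherever $A_n$-structures are manipulated; with that blanket assumption your proof is complete and is essentially the intended one.
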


If $(G,\{m_i^G\})$ and $(H,\{m^H_i\})$ are $A_n$-spaces, the product space $G\times H$ admits the \textit{product $A_n$-form} $\{m_i^{G\times H}\}$ defined by
\[
	m_i^{G\times H}(\rho;(x_1,y_1),\ldots,(x_i,y_i))=(m_i^G(\rho;x_1,\ldots,x_i),m_i^H(\rho;y_1,\ldots,y_i)).
\]
We call $(G\times H,\{m_i^{G\times H}\}_i)$ the \textit{product $A_n$-space} of $(G,\{m_i^G\})$ and $(H,\{m^H_i\})$.

Stasheff introduced \cite{MR0158400} the \textit{$A_n$-structure} of an $A_n$-space, which is a kind of iterated Dold--Lashof construction or partial universal principal bundle.
We reformulate it as follows.

\begin{dfn}[Stasheff]
Given a based space $G$, the following data is called an \textit{$A_n$-structure} on $G$:
\begin{enumerate}[(i)]
\item
a commutative ladder of based spaces
\[
\xymatrix{
	E_0 \ar[r] \ar[d]
		& E_1 \ar[r] \ar[d]
		& \cdots \ar[r]
		& E_{n-1} \ar[d] \\
	B_0 \ar[r]
		& B_1 \ar[r]
		& \cdots \ar[r]
		& B_{n-1},
}
\]
where $B_0$ is contractible,
\item
a homotopy equivalence $\eta\colon G\to E_0$,
\item
a factorization $E_{i-1}\xrightarrow{h_0}D_{i-1}\xrightarrow{h}E_i$ through a contractible space $D_{i-1}$ of the above map $E_{i-1}\to E_i$ for each $i$.
\end{enumerate}
We say that the $A_n$-structure is \textit{cofibrant} if the basepoint of $G$ is non-degenerate, each $h_0$ is a cofibration and the induced map
\[
	B_{i-1}\cup_{E_{i-1}}D_{i-1}\to B_i
\]
from the pushout is a homeomorphism.
We say that the $A_n$-structure is \textit{fibrant} if each map $E_i\to B_i$ is a fibration and each square in the condition (i) is a pullback.
\end{dfn}

\begin{rem}
While we used the terms \textit{cofibrant} and \textit{fibrant}, we do not insist on the existence of any model category structures of $A_n$-structures.
\end{rem}

\begin{dfn}
Given $A_n$-structures $\{E_i,B_i,D_i,\eta,h_0,h\}$, $\{E'_i,B'_i,D'_i,\eta',h'_0,h'\}$ of $G$, $G'$ and a based map $f\colon G\to G'$, a family of maps
\[
	f^E\colon E_{i-1}\to E'_{i-1}\qquad f^B\colon B_{i-1}\to B'_{i-1}\qquad\text{and}\qquad f^D\colon D_{i-1}\to D'_{i-1}
\]
is said to be an \textit{$A_n$-structure} on $f$ or a map between these $A_n$-structures if the following conditions are satisfied:
\begin{enumerate}[(i)]
\item
these maps satisfies $f^E(E_i)\subset E'_i$, $f^B(B_i)\subset B'_i$ and $f^D(D_i)\subset D'_i$ for each $i$ and the following diagram commutes:
\[
\xymatrix{
	E_{i-2}	\ar[r] \ar[d]^-{f^E}
		& D_{i-2} \ar[r] \ar[d]^-{f^D}
		& E_{i-1} \ar[r] \ar[d]^-{f^E}
		& B_{i-1} \ar[d]^-{f^B}
		& B_{i-2} \ar[l] \ar[d]^-{f^B} \\
	E'_{i-2}	\ar[r]
		& D'_{i-2} \ar[r]
		& E'_{i-1} \ar[r]
		& B'_{i-1}
		& B'_{i-2} \ar[l]
}
\]
\item
the following diagram commutes up to homotopy:
\[
\xymatrix{
	G \ar[r]^-{\eta} \ar[d]_-{f}
		& E_0 \ar[d] \\
	G' \ar[r]^-{\eta'}
		& E'_0
}
\]
\end{enumerate}
\end{dfn}

If $G$ is an $A_n$-space and the basepoint is non-degenerate, Stasheff \cite{MR0158400} constructed a cofibrant $A_n$-structure
\[
\xymatrix{
	E_0G \ar[r] \ar[d]
		& E_1G \ar[r] \ar[d]
		& \cdots \ar[r]
		& E_{n-1}G \ar[d] \\
	B_0G \ar[r]
		& B_1G \ar[r]
		& \cdots \ar[r]
		& B_{n-1}G
}
\]
as a variant of bar construction, where $B_0=\ast$, each $B_{i-1}\to B_i$ is a closed cofibration, $E_0G=G$, $E_{i-1}G$ is contained in a contractible subset $D_{i-1}G$ of $E_iG$, $D_0G$ is the reduced cone of $G$ and each square is a pullback.
We call it the \textit{canonical $A_n$-structure} of $G$.
The space $E_iG$ has the homotopy type of the $(i+1)$-fold join $G^{\ast(i+1)}$ of $G$.
The space $B_iG$ is called the \textit{$i$-th projective space}, where in fact, the $n$-th projective space $B_nG$ is also canonically defined as the mapping cone of $E_{n-1}G\to B_{n-1}G$.
When $n=\infty$, the space $BG=\colim_nB_n G$ is the classifying space of $G$ and $EG=\colim_nE_nG$ is contractible.
We denote the canonical inclusion by $i_k\colon B_kG\to BG$.
Note that each square is a homotopy pullback if $G$ is looplike, where we say an $A_n$-space $(G,\{m_i\})$ $(n\ge2)$ is \textit{looplike} if the left and the right translations in $\pi_0(G)$ induced from $m_2$ are bijections.
Moreover, if an $A_n$-map $G\to G'$ between $A_n$-spaces is given, then there is the canonical map between the canonical $A_n$-structures.
This is obtained by Iwase--Mimura \cite{MR1000378}.
More explicit constructions of these $A_n$-structures can be found in \cite{Iwase}. 

\begin{ex}
If $G$ is a non-degenerately based topological group, then the projection
\[
	EG\to BG
\]
of the canonical $A_\infty$-structure is a principal bundle.
Thus it is fibrant.
\end{ex}

Conversely, Stasheff \cite{MR0158400} also constructed an $A_n$-space from an $A_n$-structure.

\begin{lem}
Let $\{E_i,B_i,D_i,\eta,h_0,h\}$ be an $A_n$-structure of a based space $G$ such that each square
\[
\xymatrix{
	E_{i-1} \ar[r] \ar[d]
		& E_i \ar[d] \\
	B_{i-1} \ar[r]
		& B_i
}
\]
is a homotopy pullback.
Then, there exists a map from $\{E_i,B_i,D_i,\eta,h_0,h\}$ to a fibrant $A_n$-structure $\{\tilde{E}_i,B_i,\tilde{D}_i,\tilde{\eta},\tilde{h}_0,\tilde{h}\}$ on $G$ such that the underlying map is the identity on $G$.
\end{lem}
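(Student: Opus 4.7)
The plan is to build the fibrant $A_n$-structure $\{\tilde E_i,B_i,\tilde D_i,\tilde\eta,\tilde h_0,\tilde h\}$ inductively from the top level $i=n-1$ downward, keeping the base spaces $B_i$ unchanged. First I factor the map $E_{n-1}\to B_{n-1}$ as $E_{n-1}\hookrightarrow\tilde E_{n-1}\to B_{n-1}$, an acyclic cofibration followed by a fibration, using a functorial factorization. Then for $i=n-1,n-2,\ldots,1$ I set $\tilde E_{i-1}:=B_{i-1}\times_{B_i}\tilde E_i$, the strict pullback. Since fibrations are stable under base change, every projection $\tilde E_{i-1}\to B_{i-1}$ is again a fibration, and by construction each square of the new ladder is a strict pullback; this gives the two fibrant conditions on the $\tilde E_i$.

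The comparison map with underlying map $\mathrm{id}_G$ is constructed in the same order. The inclusion $f^E_{n-1}\colon E_{n-1}\hookrightarrow\tilde E_{n-1}$ is given, and for $i<n-1$ the map $f^E_{i-1}\colon E_{i-1}\to\tilde E_{i-1}$ is produced by the universal property of the pullback from the composite $E_{i-1}\to E_i\xrightarrow{f^E_i}\tilde E_i$ together with the projection $E_{i-1}\to B_{i-1}$. Because the original square is a homotopy pullback and the new square is the pullback of a fibration (hence also a homotopy pullback), an induction on $i$ shows that each $f^E_{i-1}$ is a weak equivalence. Setting $\tilde\eta:=f^E_0\circ\eta$ furnishes the new base equivalence, and condition (ii) in the definition of a map of $A_n$-structures holds on the nose.

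For the contractible pieces $\tilde D_{i-1}$ I take the strict pushout
\[
\tilde D_{i-1}:=D_{i-1}\cup_{E_{i-1}}\tilde E_{i-1},
\]
letting $f^D\colon D_{i-1}\to\tilde D_{i-1}$ and $\tilde E_{i-1}\to\tilde D_{i-1}$ be the structure maps and defining $\tilde D_{i-1}\to\tilde E_i$ by the pushout universal property from $\tilde E_{i-1}\to\tilde E_i$ and $D_{i-1}\to E_i\xrightarrow{f^E_i}\tilde E_i$, which agree on $E_{i-1}$ precisely because $\tilde E_{i-1}$ was defined as a pullback. Strict commutativity of every square in condition (i) then holds automatically. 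The main obstacle is to guarantee that $\tilde D_{i-1}$ is contractible; this reduces to arranging that the weak equivalences $f^E_{i-1}$ be cofibrations, after which the strict pushout above is a pushout of an acyclic cofibration into the contractible space $D_{i-1}$ and is therefore itself contractible. The cleanest way to enforce this cofibrancy is to carry out the whole construction within a Hurewicz-type framework (closed cofibrations and Hurewicz fibrations), where the factorization at the top level and the pullback construction further down can be made simultaneously compatible with cofibrancy.
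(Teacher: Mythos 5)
Your construction is essentially the paper's: factor $E_{n-1}\to B_{n-1}$ into a closed acyclic cofibration followed by a fibration, define the lower $\tilde{E}_{i-1}$ by pullback (your iterated pullbacks over the tower agree, up to canonical homeomorphism, with the paper's single pullback of $\tilde{E}_{n-1}\to B_{n-1}$ along $B_{i-1}\to B_{n-1}$), and take $\tilde{D}_{i-1}$ to be the pushout of $\tilde{E}_{i-1}\leftarrow E_{i-1}\rightarrow D_{i-1}$. Your verification that each $f^E_{i-1}$ is an equivalence via the homotopy-pullback hypothesis is exactly what fills in the paper's ``it is easy to see''.

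The one place your write-up has a gap is the final step. You reduce contractibility of $\tilde{D}_{i-1}$ to making $f^E_{i-1}\colon E_{i-1}\to\tilde{E}_{i-1}$ a closed cofibration, but for $i-1<n-1$ this map is produced by the universal property of a pullback, and base change along a fibration does not make the induced comparison map a cofibration; invoking a ``Hurewicz-type framework'' does not by itself deliver this. Two standard repairs: (a) use instead that $h_0\colon E_{i-1}\to D_{i-1}$ is a closed cofibration (as it is for the cofibrant structures to which the lemma is actually applied), so by the gluing lemma the strict pushout computes the homotopy pushout; since $f^E_{i-1}$ is a homotopy equivalence, so is its cobase change $D_{i-1}\to\tilde{D}_{i-1}$, whence $\tilde{D}_{i-1}\simeq D_{i-1}\simeq\ast$; or (b) replace the strict pushout by the double mapping cylinder, which is contractible for the same reason with no cofibrancy hypotheses, and which still admits the required structure maps $\tilde{E}_{i-1}\to\tilde{D}_{i-1}\to\tilde{E}_i$ because the two composites $E_{i-1}\to\tilde{E}_{i-1}\to\tilde{E}_i$ and $E_{i-1}\to D_{i-1}\to E_i\to\tilde{E}_i$ agree on the nose. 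With either fix the argument is complete and coincides with the paper's.
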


\begin{proof}
One can find a commutative square
\[
\xymatrix{
	E_{n-1} \ar[r] \ar[d]
		& \tilde{E}_{n-1} \ar[d] \\
	B_{n-1} \ar@{=}[r]
		& B_{n-1}
}
\]
such that $E_{n-1}\to\tilde{E}_{n-1}$ is a closed cofibration and a homotopy equivalence, and $\tilde{E}_{n-1}\to B_{n-1}$ is a fibration.
Take $\tilde{E}_{i-1}\to B_{i-1}$ as the pullback of $\tilde{E}_{n-1}\to B_{n-1}$ along the map $B_{i-1}\to B_{n-1}$ and $\tilde{D}_{i-1}$ the pushout of $\tilde{E}_{i-1}\leftarrow E_{i-1}\rightarrow D_{i-1}$.
By this construction, there are canonical maps $\tilde{E}_{i-1}\xrightarrow{\tilde{h}_0}\tilde{D}_{i-1}\xrightarrow{\tilde{h}}\tilde{E}_i$ and $\tilde{\eta}\colon G\to\tilde{E}_0$.
It is easy to see that $\{\tilde{E}_i,B_i,\tilde{D}_i,\tilde{\eta},\tilde{h}_0,\tilde{h}\}$ is the desired $A_n$-structure.
\end{proof}

We call it the \textit{fibrant replacement} of an $A_n$-structure.

\begin{prp}[Stasheff]
\label{prp_A_n-str_A_n-sp}
Given a fibrant $A_n$-structure $E=\{E_i,B_i,D_i,\eta,h_0,h\}$ of a non-degenerately based space $G$, there exist an $A_n$-form $\{m_i\}$ on $G$ and a map from the canonical $A_n$-structure of $(G,\{m_i\})$ to $E$ of which the underlying map is the identity on $G$.
Moreover, such an $A_n$-space $(G,\{m_i\})$ is looplike.
\end{prp}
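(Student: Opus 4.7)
The plan is to construct the $A_n$-form $\{m_i\}_{i=2}^n$ inductively in $i$, simultaneously with a map from the canonical $A_n$-structure of $(G,\{m_i\})$ into the given structure $E$. Each step is obstruction-theoretic and exploits the two features of fibrancy: the contractibility of $D_{i-1}$ (which permits extension off a boundary into $D_{i-1}$), and the pullback/fibration property of $E_{i-1}\to B_{i-1}$ (which allows one to project back to the fibre $G$).

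For $i=2$, I replace $\eta\colon G\to E_0$ by a based homotopy inverse so as to identify $G$ with the fibre of $E_0\to B_0$ (possible since $B_0$ is contractible), and then use the contractible space $D_0$ containing $E_0$ together with the fibration $E_1\to B_1$ to produce a fibre-transport map $G\times G\to G$, which I take as $m_2$. Normalisation (condition (1)) is arranged by performing this choice of lifts relative to the basepoint, which is possible because the basepoint is non-degenerate.

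Inductively, assume $m_2,\ldots,m_{i-1}$ have been built together with a compatible map $\Phi^{(i-1)}$ from the canonical $A_{i-1}$-structure of $(G,\{m_j\}_{j<i})$ into $E$, restricting to the identity on $G$. The formulas in (2) and (3) uniquely determine a map on the boundary
\[
f\colon\partial\mathcal{K}_i\times G^{\times i}\to G,
\]
and, via $\Phi^{(i-1)}$, a compatible map of the corresponding boundary stratum into $E_{i-1}$. Since $D_{i-1}$ is contractible, this composite extends to $\mathcal{K}_i\times G^{\times i}\to D_{i-1}$; composing with $h\colon D_{i-1}\to E_i$ and using that the square $E_{i-1}\to E_i$ over $B_{i-1}\to B_i$ is a pullback with $E_i\to B_i$ a fibration, I simultaneously obtain the extension $\Phi^{(i)}$ and, by projection to the fibre, the desired $m_i\colon\mathcal{K}_i\times G^{\times i}\to G$. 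Condition (2) is enforced by construction, and (3) by performing all extensions relative to the degenerate strata, which are cofibrantly included thanks to non-degeneracy of the basepoint.

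The main difficulty is the combinatorial verification that the various face-prescriptions coming from $\partial_k\colon\mathcal{K}_r\times\mathcal{K}_s\to\mathcal{K}_i$ agree on their codimension-two intersections, so that the boundary datum $f$ is actually well-defined; this is exactly the associativity/pentagon-type identity encoded in the cell structure of $\mathcal{K}_i$, and it follows from the inductive hypothesis on the lower $m_j$'s. Once $\{m_i\}$ is in place, $\{\Phi^{(j)}\}$ supplies the required map of $A_n$-structures, and the looplike assertion follows because the fibration $E_1\to B_1$ makes fibre transport along any loop in $B_1$ a self-homotopy equivalence of $G$; since the $m_2$-translations by elements of $\pi_0(G)$ coincide up to homotopy with such fibre transports, they induce bijections on $\pi_0(G)$.
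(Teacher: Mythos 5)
The paper gives no proof of this proposition, quoting it directly from Stasheff's work \cite{MR0158400}, and your argument is a faithful reconstruction of Stasheff's proof: the inductive, obstruction-theoretic construction of the $m_i$ using the contractibility of the $D_{i-1}$, the fibration/pullback conditions to push the resulting map $\mathcal{K}_i\times G^{\times i}\to E_i$ into the fibre $\simeq G$, and the non-degeneracy of the basepoint to enforce the unit and degeneracy conditions strictly. The looplike assertion via fibre transport along the loops in $B_1$ determined by elements of $G$ is likewise the standard argument, so I have no substantive corrections.
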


For maps between $A_n$-structures, Iwase--Mimura \cite{MR1000378} proved the following proposition.

\begin{prp}[Iwase--Mimura]
\label{prp_IwaseMimura}
Let $G$ and $G'$ be non-degenerately based $A_n$-spaces and suppose $G'$ is looplike.
Denote the canonical $A_n$-structure of $G$ by $E$ and a fibrant replacement of the canonical $A_n$-structure of $G'$ by $\tilde{E}'$.
If a based map $f\colon G\to G'$ admits an $A_n$-structure $E\to\tilde{E}'$, then $f$ admits an $A_n$-form.
\end{prp}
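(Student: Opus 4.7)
The approach is to leverage the explicit cellular description of canonical $A_n$-structures developed in \cite{Iwase}. Recall that for an $A_n$-space $G$, each space $E_iG$ in the canonical $A_n$-structure is assembled from cells of the form $\mathcal{K}_j\times G^{\times j}$, while for an $A_n$-map $f\colon G\to G'$ the induced map between canonical $A_n$-structures carries such cells into cells parametrized by multiplihedra $\mathcal{J}_j$. The proposition asserts the converse: every $A_n$-structure on $f$ arises from some $A_n$-form.

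First, I would reduce to the case where the target is the canonical $A_n$-structure of $G'$ rather than its fibrant replacement. By construction of the fibrant replacement, each $E_iG'\to\tilde{E}_iG'$ is a cofibration and a homotopy equivalence, and the base spaces $B_iG'$ are unchanged. Combined with the cofibrancy of $E$ on the source side (each $h_0$ is a cofibration and each $B_i$ is formed as a pushout), this allows one to inductively lift the given structure map $E\to\tilde{E}'$ to a map $E\to E'$ between canonical $A_n$-structures, homotopic over $\tilde{E}'$ to the original. Thus we may assume we have a map of canonical $A_n$-structures extending $f$ up to homotopy.

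Next, I would construct $\{f_i\}$ by induction on $i$ from this map of canonical $A_n$-structures. Setting $f_1=f$ and assuming $\{f_j\}_{j<i}$ has been defined and satisfies the $A_{i-1}$-form relations, I would restrict the structure map $f^E$ to the distinguished top cell of $E_{i-1}G$ parametrized by $\mathcal{K}_i\times G^{\times i}$ and compare with Iwase's explicit cellular decomposition of $E_{i-1}G'$ into pieces parametrized by multiplihedra and built from the $A_n$-form $\{m'_i\}$ of $G'$. The image of the restriction is governed by a parameter space canonically identified with $\mathcal{J}_i$, and reading off the parametrization yields a map $f_i\colon\mathcal{J}_i\times G^{\times i}\to G'$. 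The coherence conditions (ii)--(iv) in the definition of $A_n$-form translate directly into the compatibility of $f^E$ with the boundary cell attachments and the cellular degeneracies of $E_{i-1}G$, which hold by virtue of $f^E$ being a map of $A_n$-structures in the first place.

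The chief obstacle is the explicit cellular bookkeeping: matching the boundary maps $\delta_k$ and $\delta$ and the degeneracy maps $d_k$ of $\mathcal{J}_i$ against the boundary cell structure of $E_{i-1}G$ and the resulting decomposition of its image inside $E_{i-1}G'$ requires invoking the detailed model of \cite{Iwase}. Once this identification is in place, both the existence of the extension $f_i$ and the verification of all defining relations for an $A_n$-form follow automatically from the commutativity built into the given $A_n$-structure map.
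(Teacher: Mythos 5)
The paper offers no proof of this proposition: it is quoted verbatim from Iwase--Mimura \cite{MR1000378}, so there is nothing in-text to compare against. Judged on its own terms, your proposal has a genuine gap, and it occurs at the very first step. You reduce to a map $E\to E'$ of canonical $A_n$-structures by lifting along the trivial cofibrations $E_iG'\to\tilde{E}_iG'$. Even granting that this rectification can be carried out strictly (the definition of a map of $A_n$-structures requires strictly commutative diagrams, and composing with a deformation retraction $\tilde{E}_i'\to E_i'$ only yields homotopy commutativity, so a further homotopy-extension argument would be needed), the reduction goes in the wrong direction: the fibrancy of $\tilde{E}'$ is precisely the tool that makes the proposition provable. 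In the standard argument, parallel to Stasheff's Proposition \ref{prp_A_n-str_A_n-sp} (whose statement in this paper likewise carries the fibrancy hypothesis), the higher homotopies $f_i\colon\mathcal{J}_i\times G^{\times i}\to G'$ are produced by taking the nullhomotopies supplied by the contractible spaces $D_{i-1}G$, pushing them into $\tilde{E}'$, and using the covering homotopy property of the fibrations $\tilde{E}_i'\to B_i'$ together with the pullback squares to deform everything into the fibre $\tilde{E}_0'\simeq G'$. This is also where the looplike hypothesis on $G'$ enters (it guarantees that the squares of the canonical structure are homotopy pullbacks, so that a fibrant replacement with genuine pullback squares exists); your argument never invokes this hypothesis, which is a warning sign.

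The cellular second step does not repair the loss. A map of $A_n$-structures $f^E\colon E_{i-1}G\to E_{i-1}G'$ is only required to commute with the structure maps; it need not be cellular, and $E_{i-1}G'$ is a quotient of a disjoint union of pieces, so the restriction of $f^E$ to the top cell of $E_{i-1}G$ does not disassemble into maps into individual cells of $E_{i-1}G'$, nor is there a canonical projection from $E_{i-1}G'$ onto $G'$-valued data from which one could ``read off'' a map $\mathcal{J}_i\times G^{\times i}\to G'$. The identification of the relevant parameter space with $\mathcal{J}_i$ is exactly the content that must be constructed, and without the fibration there is no mechanism for landing in $G'$. The fix is to abandon the reduction and run the inductive lifting argument directly against the fibrant target $\tilde{E}'$.
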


Combining with the fiber-cofiber argument, the following corollary follows.

\begin{cor}
\label{cor_A_n-map_projectivesp}
Let $G$ be a non-degenerately based $A_n$-space and $G'$ be a non-degenerately based looplike $A_\infty$-space.
Then a based map $f\colon G\to G'$ admits an $A_n$-form if and only if the composite
\[
	\Sigma G\xrightarrow{\Sigma f}\Sigma G'\xrightarrow{i_1}BG'
\]
extends over the $n$-th projective space $B_nG$.
\end{cor}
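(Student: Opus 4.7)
The strategy is to translate between the existence of an $A_n$-form on $f$ and that of a map of $A_n$-structures $E\to\tilde E'$ covering $f$, where $E$ is the canonical cofibrant $A_n$-structure on $G$ and $\tilde E'$ is the fibrant replacement of the canonical $A_\infty$-structure on $G'$. One direction is supplied by Proposition~\ref{prp_IwaseMimura}; the other follows from the functoriality of Stasheff's canonical construction for $A_n$-maps \cite{MR1000378,Iwase}. For the ``only if'' direction, the $A_n$-form on $f$ functorially yields compatible maps $f^B_{n-1}\colon B_{n-1}G\to B_{n-1}G'$ and $f^E_{n-1}\colon E_{n-1}G\to E_{n-1}G'$, and the mapping cone definitions of $B_nG$ and $B_nG'$ assemble these into a map $B_nG\to B_nG'$; postcomposition with $i_n$ extends $i_1\circ\Sigma f$ as required.

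For the converse direction, given $g\colon B_nG\to BG'$ extending $i_1\circ\Sigma f$, I inductively build the components $f^E_k$, $f^D_k$, $f^B_k$ of a map $E\to\tilde E'$ by the fiber--cofiber argument. At each stage $k$, the cofibration $E_kG\hookrightarrow D_kG$ from the cofibrancy of $E$, together with the contractibility of $\tilde D_kG'$ from the fibrancy of $\tilde E'$, produces $f^D_k$ extending the previously built $f^E_k$; the pushout presentation $B_{k+1}G=B_kG\cup_{E_kG}D_kG$ then forces $f^B_{k+1}$, and the fibration $\tilde E_{k+1}G'\to B_{k+1}G'$ allows $f^E_{k+1}$ to be lifted from $f^B_{k+1}$. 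The given extension $g$ enters as the datum that pins down the homotopy class of each lift so that $i_{k+1}\circ f^B_{k+1}\simeq g|_{B_{k+1}G}$, ensuring the inductive construction is actually controlled by $g$ rather than arbitrary. Lemma~\ref{lem_homotopyinv_A_n-map} absorbs any homotopic replacement of $f$ that arises during the construction, and Proposition~\ref{prp_IwaseMimura} then upgrades the resulting $A_n$-structure to an $A_n$-form on $f$.

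The main obstacle is to ensure that the inductive lifts assemble into a strictly commutative map of $A_n$-structures, as the definition requires, rather than one commuting only up to homotopy. This is precisely where the simultaneous use of the cofibrancy of $E$ and the fibrancy of $\tilde E'$ is essential: the former rigidifies the source along pushouts and cofibrations, while the latter rigidifies the target along pullbacks and fibrations, so that the homotopy-commutative data furnished by $g$ can be promoted to a strictly commuting ladder stage by stage.
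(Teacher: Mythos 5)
Your proposal is correct and matches the paper's own (one-sentence) proof, namely Proposition~\ref{prp_IwaseMimura} combined with the fiber--cofiber argument applied to the canonical cofibrant $A_n$-structure on $G$ and a fibrant replacement on the $G'$ side. The one point worth sharpening is the role of the extension $g$: it is needed not merely to pin down the homotopy classes of the lifts $f^E_{k+1}$ but to make them exist at all, since the relative lifting problem against the fibration $\tilde{E}_{k+1}G'\to B_{k+1}G'$ is solvable because the composite $E_{k+1}G\to B_{k+1}G\to BG'$ is null-homotopic, which one sees by factoring it through the contractible piece $D_{k+1}G\subset B_{k+2}G$ (respectively through the cone in the mapping cone $B_nG$ at the top stage) and then applying $g$.
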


\section{Higher homotopy commutativity}
\label{section_highercomm}

In this section, we study the properties and relations of higher homotopy commutativities.

\subsection{\texorpdfstring{$A_n$-structure on product $A_n$-space}{An-structure on product An-space}}
\label{subsection_product_A_n-space}
The following $A_n$-structure is given by Iwase \cite[Section 4]{MR1642747}.
\begin{lem}
Let $G$ and $H$ be non-degenerately based $A_n$-spaces.
Define spaces $E_i(G,H)$, $B_i(G,H)$ and $D_i(G,H)$ by 
\begin{align*}
	E_i(G,H)&=\bigcup_{j_1+j_2=i}E_{j_1}G\times E_{j_2}H,\\
	B_i(G,H)&=\bigcup_{j_1+j_2=i}B_{j_1}G\times B_{j_2}H,\\
	D_i(G,H)&=\bigcup_{j_1+j_2=i}(D_{j_1}G\times E_{j_2}H\cup\ast\times D_{j_2}H).
\end{align*}
Then the family $\{E_i(G,H),B_i(G,H),D_i(G,H)\}$ is an $A_n$-structure of $G\times H$.
Moreover, if $G$ and $H$ are looplike, the square
\[
\xymatrix{
	E_{i-1}(G,H) \ar[r] \ar[d]
		& E_i(G,H) \ar[d] \\
	B_{i-1}(G,H) \ar[r]
		& B_i(G,H)
}
\]
is a homotopy pullback for each $i$.
\end{lem}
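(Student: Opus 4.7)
The plan is to verify the $A_n$-structure axioms for the product spaces directly from the canonical cofibrant $A_n$-structures of $G$ and $H$. Most axioms are immediate: $B_0(G,H) = B_0G \times B_0H$ is a point, $E_0(G,H) = G \times H$ so $\eta$ may be taken as the identity, and the nested inclusions $E_{j-1}X \subset D_{j-1}X \subset E_jX$ and $B_{j-1}X \subset B_jX$ for $X = G, H$ yield the required inclusions and factorizations in the product. Commutativity of the ladder and the cofibrancy requirements transfer from the corresponding properties of the inputs together with the non-degenerate basepoint hypothesis. The substantive content lies in the contractibility of $D_{i-1}(G,H)$ and, in the looplike case, the homotopy-pullback claim.

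For the contractibility, since $\ast \times D_{j_2}H \subset \ast \times D_{i-1}H$ one can rewrite
\[
D_{i-1}(G,H) \,=\, \bigcup_{l=0}^{i-1} D_lG \times E_{i-1-l}H \;\cup\; \ast \times D_{i-1}H,
\]
and filter by $U_k := (\bigcup_{l \le k} D_lG \times E_{i-1-l}H) \cup \ast \times D_{i-1}H$. The base piece $U_0$ is the pushout of $D_0G \times E_{i-1}H$ and $\ast \times D_{i-1}H$ along $\ast \times E_{i-1}H$; since $\ast \hookrightarrow D_0G$ is a cofibration and a homotopy equivalence, so is the left-hand leg of this pushout, and the pushout therefore collapses to $\ast \times D_{i-1}H$, which is contractible. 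An intersection computation gives $U_k = U_{k-1} \cup_{D_{k-1}G \times E_{i-1-k}H}(D_kG \times E_{i-1-k}H)$, with attaching map a cofibration and homotopy equivalence (because $D_{k-1}G \hookrightarrow D_kG$ is such), so each inductive pushout preserves the homotopy type of $U_{k-1}$. Hence $D_{i-1}(G,H) = U_{i-1}$ is contractible.

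For the homotopy-pullback claim in the looplike case, the plan is to induct along the same filtration. Each cell square
\[
\begin{array}{ccc}
E_{j_1-1}G \times E_{j_2}H & \to & E_{j_1}G \times E_{j_2}H \\
\downarrow & & \downarrow \\
B_{j_1-1}G \times B_{j_2}H & \to & B_{j_1}G \times B_{j_2}H
\end{array}
\]
is a homotopy pullback, being the product of the looplike pullback square for $G$ with the identity on $E_{j_2}H \to B_{j_2}H$, and the intersections of cells are analogous products of looplike squares. Applying Mather's second cube theorem inductively to propagate the pullback property through the successive pushouts—first along the $E$-filtration and then along the $B$-filtration—then yields the global homotopy pullback.

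The main obstacle is the combinatorial bookkeeping: many overlapping pieces have to be reconciled, and verifying both that the inductive pushouts are genuine homotopy pushouts and that the pullbacks glue consistently requires careful cofibrancy and compatibility checks at each stage. Both difficulties ultimately reduce to the cofibrancy and pullback properties of the canonical $A_n$-structures of $G$ and $H$, but keeping track of all the inclusions and intersections cleanly is the real work.
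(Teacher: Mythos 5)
The paper does not actually prove this lemma --- it is quoted from Iwase's paper cited just above it, so there is no internal argument to compare yours against; I will judge the proposal on its own terms. Your proof is essentially correct and isolates exactly the two substantive points: contractibility of $D_{i-1}(G,H)$ and the homotopy-pullback claim. The filtration $U_0\subset U_1\subset\cdots\subset U_{i-1}=D_{i-1}(G,H)$ is set up correctly: the intersection computation $(D_kG\times E_{i-1-k}H)\cap U_{k-1}=D_{k-1}G\times E_{i-1-k}H$ checks out, each stage is obtained by cobase change along an acyclic cofibration, and $U_0$ collapses onto $\ast\times D_{i-1}H$. Two caveats. First, you implicitly use that $\ast\hookrightarrow D_0G$ and $D_{k-1}G\hookrightarrow D_kG$ are cofibrations; these are features of Stasheff's canonical (cofibrant) $A_n$-structure, not of an abstract one, so you should state explicitly that $E_jG,B_jG,D_jG$ denote the canonical structures of well-pointed $A_n$-spaces (as the notation in the lemma intends). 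Second, the homotopy-pullback step is the thinnest part of the sketch. Rather than threading Mather's second cube theorem through a doubly-indexed family of pushouts, it is cleaner to show by induction over the poset of pieces that each $E_j(G,H)\to B_j(G,H)$ is the homotopy pullback of $EG\times EH\to BG\times BH$ along $B_j(G,H)\hookrightarrow BG\times BH$: each cell $E_{j_1}G\times E_{j_2}H\to B_{j_1}G\times B_{j_2}H$ is such a pullback precisely because $G$ and $H$ are looplike, the intersections of cells are again cells, and the gluing is handled by the first cube theorem (descent for homotopy pullbacks over homotopy pushouts). The square between consecutive stages is then a homotopy pullback by the pasting lemma. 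Your cube-theorem induction can be made to work, but as you yourself note the bookkeeping is the real content, and the proposal does not yet carry it out.
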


The following proposition plays an important role in the proof of our theorems.

\begin{prp}
\label{prp_product_projectivesp}
Let $G$ and $H$ be non-degenerately based looplike $A_n$-spaces.
Then there is a homotopy commutative diagram
\[
\xymatrix{
	\Sigma(G\times H) \ar[r] \ar[d]_-{\Sigma p_1+\Sigma p_2}
		& B_2(G\times H) \ar[r] \ar[d]
		& \cdots \ar[r]
		& B_n(G\times H) \ar[d] \\
	\Sigma G\vee\Sigma H \ar[r]
		& B_2(G,H) \ar[r]
		& \cdots \ar[r]
		& B_n(G,H),
}
\]
where $p_i$ is the $i$-th projection and the addition is given by the suspension parameter of $\Sigma(G\times H)$.
\end{prp}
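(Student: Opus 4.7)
The plan is to construct the maps $B_i(G\times H)\to B_i(G,H)$ as induced maps on canonical projective spaces of a suitable $A_n$-map between two $A_n$-forms on $G\times H$: the product form $\{m_i^{G\times H}\}$ and a form $\{m_i'\}$ extracted from the Iwase-style $A_n$-structure $\{E_i(G,H),B_i(G,H),D_i(G,H)\}$. First I would use the preceding lemma together with Proposition \ref{prp_A_n-str_A_n-sp} applied to a fibrant replacement of this Iwase $A_n$-structure (whose squares are homotopy pullbacks since $G$ and $H$ are looplike) to obtain $\{m_i'\}$ together with a map of $A_n$-structures from the canonical one of $(G\times H,\{m_i'\})$ to the Iwase $A_n$-structure with underlying map $\mathrm{id}_{G\times H}$. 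This immediately yields compatible maps
\[
B_i^{\mathrm{can}}(G\times H,\{m_i'\})\to B_i(G,H).
\]

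Next I would show that the identity is also an $A_n$-equivalence $(G\times H,\{m_i^{G\times H}\})\to (G\times H,\{m_i'\})$. The key observation is that the Iwase $A_n$-structure admits evident projections of $A_n$-structures to $\{E_iG,B_iG,D_iG\}$ and $\{E_iH,B_iH,D_iH\}$ by restricting to the first or second factor on each summand of the union definitions and sending the $\ast\times D_{j_2}H$ part to the basepoint. Composed with the map from step one, Proposition \ref{prp_IwaseMimura} equips $p_1$ and $p_2$ with $A_n$-forms viewed as maps out of $(G\times H,\{m_i'\})$. Since a direct check of the axioms shows that the pair of two $A_n$-maps is an $A_n$-map to the product $A_n$-space via componentwise pairing, the identity $(G\times H,\{m_i'\})\to (G\times H,\{m_i^{G\times H}\})$ is itself an $A_n$-map. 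Being a homotopy equivalence, it is an $A_n$-equivalence; by Lemma \ref{lem_homotopyinv_A_n-map} its homotopy inverse---again the identity---admits an $A_n$-form and thus induces maps $B_i(G\times H)\to B_i^{\mathrm{can}}(G\times H,\{m_i'\})$, which composed with step one give the desired vertical arrows. Homotopy commutativity of the squares on the right is then automatic from the compatibility of all constructions with the filtrations.

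Finally I would identify the first vertical arrow as $\Sigma p_1+\Sigma p_2$. On $B_1=\Sigma(G\times H)$ the composite factors as $\Sigma(\mathrm{id})=\mathrm{id}$ followed by the map $\Sigma(G\times H)\to\Sigma G\vee\Sigma H$ induced on first projective spaces by the map of $A_n$-structures in step one. This latter map is determined by the structure map $D_0^{\mathrm{can}}(G\times H,\{m_i'\})=C(G\times H)\to D_0(G,H)=CG\times H\cup\ast\times CH$, which after collapsing the base $G\times H$ becomes exactly the pinch $\Sigma p_1+\Sigma p_2$.

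The main obstacle is this last verification: while the formal construction of compatible maps of $A_n$-structures is immediate from the cited propositions, pinning down the first-level map as precisely $\Sigma p_1+\Sigma p_2$ (as opposed to another map agreeing with it only after postcomposition with the retractions onto $\Sigma G$ and $\Sigma H$) requires carefully unpacking the explicit coning decomposition in the Iwase $A_n$-structure and the choice of extension maps implicit in the proof of Proposition \ref{prp_A_n-str_A_n-sp}.
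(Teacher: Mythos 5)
Your proposal follows essentially the same route as the paper: extract an $A_n$-form $\{m_i'\}$ from (a fibrant replacement of) the Iwase $A_n$-structure via Proposition \ref{prp_A_n-str_A_n-sp}, use the projections onto the canonical structures of $G$ and $H$ to make the identity an $A_n$-equivalence between $(G\times H,\{m_i'\})$ and the product $A_n$-space, compose the resulting maps of canonical $A_n$-structures, and identify the first vertical arrow by inspecting $D_0(G\times H)=C(G\times H)\to D_0(G,H)=CG\times H\cup\ast\times CH$. The one detail the paper makes explicit that you elide is the top-level square: the map of $A_n$-structures only reaches level $n-1$, and extending $B_{n-1}(G\times H)\to B_n(G,H)$ over the mapping cone $B_n(G\times H)$ requires the observation that the composite $E_{n-1}(G,H)\to B_{n-1}(G,H)\to B_n(G,H)$ is null-homotopic.
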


\begin{proof}
By Proposition \ref{prp_A_n-str_A_n-sp}, there is an $A_n$-form $\{m'_i\}$ on $G\times H$ such that there is a map between the associated canonical $A_n$-structure to the fibrant replacement $\{\tilde{E}_i(G,H),B_i(G,H),\tilde{D}_i(G,H)\}$.
Since the projections from $\{E_i(G,H),B_i(G,H),D_i(G,H)\}$ to the canonical $A_n$-structures of $G$ and $H$ are $A_n$-structures on $p_1\colon G\times H\to G$ and $p_2\colon G\times H\to H$, the identity map $G\times H\to G\times H$ admits an $A_n$-form $\{f_i\}$ as an $A_n$-map from $(G\times H,\{m'_i\})$ to the product $A_n$-space $G\times H$.
Then, since the pair $(\id,\{f_i\})$ is an $A_n$-equivalence from $(G\times H,\{m'_i\})$ to the product $A_n$-space $G\times H$, the identity map also admits an $A_n$-form as an $A_n$-map from the product $A_n$-space $G\times H$ to $(G\times H,\{m'_i\})$.
Thus we have a map between the canonical $A_n$-structures of them of which the underlying map is the identity on $G\times H$.
Moreover, since the composite
\[
	E_{n-1}(G,H)\to B_{n-1}(G,H)\to B_n(G,H)
\]
is null-homotopic, the map $B_{n-1}(G\times H)\to B_n(G,H)$ extends over $B_n(G\times H)$.
Hence we have a homotopy commutative ladder
\[
\xymatrix{
	\Sigma(G\times H) \ar[r] \ar[d]
		& B_2(G\times H) \ar[r] \ar[d]
		& \cdots \ar[r]
		& B_n(G\times H) \ar[d] \\
	\Sigma G\vee\Sigma H \ar[r]
		& B_2(G,H) \ar[r]
		& \cdots \ar[r]
		& B_n(G,H).
}
\]
By observing the composite
\[
	D_0(G\times H)=C(G\times H)
		\to D_0(G,H)=CG\times H\cup\ast\times CH
		\to\Sigma G\vee\Sigma H,
\]
we can see that the map $\Sigma(G\times H)\to\Sigma G\vee\Sigma H$ is homotopic to $\Sigma p_1+\Sigma p_2$.
\end{proof}

\subsection{\texorpdfstring{Sugawara $C_n$-space}{Sugawara Cn-space}}
\label{subsection_Sugawara_C_n-space}
Let us recall the higher homotopy commutativity introduced by Sugawara \cite{MR0120645} for $n=\infty$ and generalized by McGibbon \cite{MR999733} for $n<\infty$.

\begin{dfn}
An $A_n$-space $G$ is said to be a \textit{Sugawara $C_n$-space} if the multiplication
\[
	m_2\colon G\times G\to G
\]
admits an $A_n$-form as an $A_n$-map which respects the product $A_n$-form on $G\times G$.
\end{dfn}

We give an obstruction theoretic characterization of a Sugawara $C_n$-space.
A similar characterization is obtained by Hemmi--Kawamoto \cite[Corollary 1.1]{MR2793107}.

\begin{prp}
\label{prp_Sugawara_projectivesp}
A looplike $A_\infty$-space $G$ having the based homotopy type of a CW complex is a Sugawara $C_n$-space if and only if the composite
\[
	\Sigma G\vee\Sigma G\to BG\vee BG\xrightarrow{\nabla}BG
\]
of the wedge sum of the inclusions and the folding map extends over the space $B_n(G,G)$.
\end{prp}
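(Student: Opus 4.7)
The plan is to reduce the Sugawara $C_n$ condition to an extension problem via Corollary~\ref{cor_A_n-map_projectivesp}, and then transfer this extension problem from the canonical projective space of the product $A_n$-form on $G \times G$ onto Iwase's space $B_n(G,G)$ constructed in Subsection~\ref{subsection_product_A_n-space}. By definition, $G$ is a Sugawara $C_n$-space if and only if $m_2 \colon G \times G \to G$ admits an $A_n$-form as an $A_n$-map from the product $A_n$-space $G \times G$ to $G$. Applying Corollary~\ref{cor_A_n-map_projectivesp} to $m_2$, this is equivalent to the existence of an extension of $i_1 \circ \Sigma m_2 \colon \Sigma(G \times G) \to BG$ over the canonical $n$-th projective space $B_n(G \times G)$ of the product $A_n$-form.

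The second step is to identify the homotopy class of $i_1 \circ \Sigma m_2$ with that of $\phi \circ (\Sigma p_1 + \Sigma p_2) \colon \Sigma(G \times G) \to BG$, where $\phi \colon \Sigma G \vee \Sigma G \to BG$ is the map in the proposition. Under the adjunction $[\Sigma(G \times G), BG] \cong [G \times G, G]$ arising from the $A_\infty$-equivalence $\Omega BG \simeq G$, the co-H-space sum $\Sigma p_1 + \Sigma p_2$ corresponds to the H-space product $p_1 \cdot p_2 = m_2$, which is also the adjoint of $i_1 \circ \Sigma m_2$; hence the two composites are homotopic.

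Proposition~\ref{prp_product_projectivesp} then supplies a map $\pi \colon B_n(G \times G) \to B_n(G,G)$ whose restriction to $\Sigma(G \times G)$ is $\Sigma p_1 + \Sigma p_2$. Since $\pi$ is induced by a map between two $A_n$-structures on the looplike $A_n$-space $G \times G$ covering the identity, the fiber-cofiber argument shows $\pi$ is a weak homotopy equivalence. For the $(\Leftarrow)$ direction, an extension $\tilde\phi \colon B_n(G,G) \to BG$ of $\phi$ yields $\tilde\phi \circ \pi$ as an extension of $i_1 \circ \Sigma m_2$ over $B_n(G \times G)$, whence $G$ is Sugawara $C_n$ by Corollary~\ref{cor_A_n-map_projectivesp}. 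Conversely, an extension $F \colon B_n(G \times G) \to BG$ of $i_1 \circ \Sigma m_2$ lifts through the weak equivalence $\pi$ to $\tilde F \colon B_n(G,G) \to BG$, whose restriction to $\Sigma G \vee \Sigma G$ coincides with $\phi$ by a direct check using $m_2(g, \ast) = g = m_2(\ast, g)$.

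The main obstacle is confirming that $\pi$ is a weak equivalence, which amounts to showing that Iwase's $A_n$-structure and the canonical $A_n$-structure of the product $A_n$-form are equivalent as $A_n$-structures on $G \times G$; once this is granted, the fiber-cofiber argument transfers the equivalence to the $n$-th projective spaces.
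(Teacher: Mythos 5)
Your reduction of the Sugawara $C_n$ condition to an extension problem over $B_n(G\times G)$ via Corollary \ref{cor_A_n-map_projectivesp}, and your identification of $i_1\circ\Sigma m_2$ with $\phi\circ(\Sigma p_1+\Sigma p_2)$ by adjointness, are both sound; in particular your ``if'' direction goes through (and replaces the Eckmann--Hilton step of the paper's argument by a cleaner adjunction computation). The fatal problem is the claim that the map $\pi\colon B_n(G\times G)\to B_n(G,G)$ of Proposition \ref{prp_product_projectivesp} is a weak homotopy equivalence. It is not. Already for $n=1$ it is the map $\Sigma p_1+\Sigma p_2\colon\Sigma(G\times G)\to\Sigma G\vee\Sigma G$, which annihilates the wedge summand $\Sigma(G\wedge G)$ of $\Sigma(G\times G)$ and so is not a weak equivalence unless $G$ is contractible. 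For general $n$ the two spaces really differ: with $G=S^1$, the space $B_n(S^1,S^1)=\bigcup_{j_1+j_2=n}\mathbb{C}P^{j_1}\times\mathbb{C}P^{j_2}$ is the $2n$-skeleton of $\mathbb{C}P^\infty\times\mathbb{C}P^\infty$, whereas $B_n(S^1\times S^1)\simeq (T^2)^{\ast(n+1)}/T^2$ carries nontrivial homology in degree $3n$. The fiber--cofiber induction you invoke does not get started because Iwase's $A_n$-structure is not cofibrant: $B_i(G,G)$ is \emph{not} the mapping cone of $E_{i-1}(G,G)\to B_{i-1}(G,G)$ (for $i=1$ that mapping cone is $\Sigma(G\times G)$, not $\Sigma G\vee\Sigma G$), so the cofiber comparison fails at the first stage.

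Consequently your ``only if'' direction collapses: an extension $F\colon B_n(G\times G)\to BG$ cannot be lifted through $\pi$. What is needed instead is a map in the \emph{opposite} direction, $\varphi\colon B_n(G,G)\to B_n(G\times G)$, compatible with the inclusions of $\Sigma G\vee\Sigma G$ and $\Sigma(G\times G)$; this is how the paper argues. After replacing $G$ by a topological group, one uses the homotopy equivalence $B(G\times G)\simeq BG\times BG$ together with the fact that $B_n(G,G)\subset BG\times BG$ is assembled in $n$ cone-attachment stages (equivalently, has category at most $n$) to lift the resulting map $B_n(G,G)\to B(G\times G)$ inductively to $B_n(G\times G)$; composing $\varphi$ with the extension $\mu\colon B_n(G\times G)\to BG$ furnished by Corollary \ref{cor_A_n-map_projectivesp} then yields the required extension over $B_n(G,G)$, after checking its restriction to $\Sigma G\vee\Sigma G$. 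Without some such construction of $\varphi$, your argument proves only one of the two implications.
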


\begin{proof}
One can find a topological group $G'$ and an $A_\infty$-equivalence $G'\to G$.
For example, take $G'$ as the geometric realization of Kan's simplicial loop group on $BG$.
An $A_\infty$-equivalence induces a homotopy equivalence between the projective spaces.
Then we may assume that $G$ is a topological group.

By Corollary \ref{cor_A_n-map_projectivesp}, if the multiplication $m\colon G\times G\to G$ is an $A_n$-map, there is a homotopy commutative diagram
\[
\xymatrix{
	\Sigma(G\times G) \ar[r]^-{\Sigma m} \ar[d]_-{i_1}
		& \Sigma G \ar[d]^-{i_1} \\
	B_n(G\times G) \ar[r]^-{\mu}
		& BG.
}
\]
The projections $B(G\times G)\to BG$ induce a homotopy equivalence $B(G\times G)\to BG\times BG$.
Considering the homotopy inverse, we have the factorizations
\[
\xymatrix{
	\Sigma G\vee\Sigma G \ar[r] \ar[d]_-{\text{inclusion}}
		& B_2(G,G) \ar[r] \ar[d]
		& \cdots \ar[r]
		& B_n(G,G) \ar[r] \ar[d]^-{\varphi}
		& BG\times BG \ar[d]^{\simeq} \\
	\Sigma(G\times G) \ar[r]
		& B_2(G\times G) \ar[r]
		& \cdots \ar[r]
		& B_n(G\times G) \ar[r]^-{i_n}
		& B(G\times G)
}
\]
since $B_i(G,G)=B_{i-1}(G,G)\cup_{E_{i-1}(G,G)}D_{i-1}(G,G)$ has the homotopy type of the mapping cone of $E_{i-1}(G,G)\to B_{i-1}(G,G)$.
Thus the composite
\[
	B_n(G,G)\xrightarrow{\varphi}B_n(G\times G)\xrightarrow{\mu}BG
\]
is restricted to a map homotopic to the wedge sum of the inclusions $\Sigma G\vee\Sigma G\to BG$.

Conversely, suppose that there is a map $f\colon B_n(G,G)\to BG$ which is an extension of the wedge sum of the inclusions $\Sigma G\vee\Sigma G\to BG$ and $n\ge2$.
By Proposition \ref{prp_product_projectivesp} and Corollary \ref{cor_A_n-map_projectivesp}, there is a map $m'\colon G\times G\to G$ admitting an $A_n$-form such that $m'$ restricts to the folding map $G\vee G\to G$.
Since $m'$ admits an $A_2$-form, the two maps
\[
	(x_1,x_2,y_1,y_2)\mapsto m'(m(x_1,x_2),m(y_1,y_2)),
		\qquad (x_1,x_2,y_1,y_2)\mapsto m(m'(x_1,y_1),m'(x_2,y_2))
\]
are homotopic.
Then by the Eckmann--Hilton argument, $m$ and $m'$ are homotopic.
Therefore, $m$ also admits an $A_n$-form by Lemma \ref{lem_homotopyinv_A_n-map}.
\end{proof}

\subsection{\texorpdfstring{$C(k_1,\ldots,k_r)$-space}{C(k1,...kr)-space}}
\label{subsection_C(k1...kr)}

For our applications to gauge groups, it is convenient to generalize $C(k,\ell)$-space \cite{MR2678992} as follows.

\begin{dfn}
\label{dfn_C(k1...kr)}
A looplike $A_\infty$-space $G$ is said to be a \textit{$C(k_1,\ldots,k_r)$-space} ($r\ge2,k_1,\ldots,k_r\ge1$) if the wedge sum of inclusions
\[
	\Sigma G\vee\cdots\vee \Sigma G\to BG
\]
extends over the product $B_{k_1}G\times\cdots\times B_{k_r}G$.
\end{dfn}

As in \cite[Section 3]{MR1337215}, when $k_1=\cdots=k_r=1$, a $C(k_1,\ldots,k_r)$-space is exactly a Williams $C_r$-space.
When $k_1=\cdots=k_r=\infty$, a $C(k_1,\ldots,k_r)$-space is exactly a $C(\infty,\infty)$-space and hence a Sugawara $C_\infty$-space.
Hemmi--Kawamoto \cite{MR2793107} proved that a Sugawara $C_n$-space is described by explicit higher homotopies using the resultohedra.
Analogously, the authors guess that our new ``commutativity'' is also described by certain polytopes.
But we do not try to do this in the present paper.

The relations with other higher commutativities is obtained as follows.

\begin{prp}
Let $G$ be a looplike $A_\infty$-space having the homotopy type of a CW complex and $r\ge2$ and $k_1,\ldots,k_r\ge1$ be integers.
Then the implications (i)$\Rightarrow$(ii)$\Rightarrow$(iii) hold for the following conditions:
\begin{enumerate}[(i)]
\item
$G$ is a Sugawara $C_{k_1+\cdots+k_r}$-space,
\item
$G$ is a $C(k_1,\ldots,k_r)$-space,
\item
$G$ is a Williams $C_{k_1+\cdots+k_r}$-space.
\end{enumerate}
\end{prp}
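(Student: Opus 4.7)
The plan is to prove each implication separately, using the projective-space characterizations of the three notions.

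For the implication (i)$\Rightarrow$(ii), set $n=k_1+\cdots+k_r$. Since $G$ is a Sugawara $C_n$-space, the multiplication $m\colon G\times G\to G$ is an $A_n$-map with respect to the product $A_n$-structure on $G\times G$. As composites and products of $A_n$-maps are again $A_n$-maps, the $r$-fold iterated multiplication $m^{(r)}\colon G^r\to G$ is an $A_n$-map when $G^r$ is equipped with the product $A_n$-structure. By Corollary \ref{cor_A_n-map_projectivesp}, the composite $\Sigma G^r\to\Sigma G\xrightarrow{i_1}BG$ extends over $B_n(G^r)$. Next, I would generalize Proposition \ref{prp_product_projectivesp} to $r$ factors by setting
\[
B_i(G,\ldots,G):=\bigcup_{j_1+\cdots+j_r=i}B_{j_1}G\times\cdots\times B_{j_r}G,
\]
and similarly for $E_i,D_i$; this yields an $A_n$-structure on $G^r$ together with a comparison map $\varphi\colon B_n(G,\ldots,G)\to B_n(G^r)$, constructed by the same mapping-cone argument as in the proof of Proposition \ref{prp_Sugawara_projectivesp}. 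Restricting the composite $B_n(G,\ldots,G)\xrightarrow{\varphi}B_n(G^r)\to BG$ to the subspace $B_{k_1}G\times\cdots\times B_{k_r}G\hookrightarrow B_n(G,\ldots,G)$ (permissible since $k_1+\cdots+k_r=n$) yields the desired $C(k_1,\ldots,k_r)$-extension of the wedge of inclusions.

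For the implication (ii)$\Rightarrow$(iii), the strategy is to iteratively split each factor $B_{k_j}G$ with $k_j\geq 2$ into $B_{k_j-1}G\times B_1G$, thereby increasing the number of product factors from $r$ to $n$ while reducing all factor sizes to $1$. The key lemma is: if $G$ is a $C(k_1,\ldots,k_r)$-space with $k_j\geq 2$, then $G$ is also a $C(k_1,\ldots,k_{j-1},k_j-1,1,k_{j+1},\ldots,k_r)$-space. To prove the lemma, I construct a natural splitting map $\mu_k\colon B_{k-1}G\times B_1G\to B_kG$ whose restriction to each axis is the canonical inclusion; precomposing the given extension $f\colon\prod_iB_{k_i}G\to BG$ with $\mu_{k_j}$ on the relevant coordinate pair produces the refined extension. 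Iterating until all factors become $B_1G=\Sigma G$ yields the $C(1,\ldots,1)$-structure with $n$ ones, which is by definition Williams $C_n$.

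The main obstacle is constructing the splitting map $\mu_k$ with the required axis-inclusion behavior. I expect to obtain it from the $A_\infty$-structure of $G$ together with the explicit bar-construction model of $B_kG$: the cofibration $B_{k-1}G\hookrightarrow B_kG$ is attached along $E_{k-1}G\to B_{k-1}G$ through the contractible cone $D_{k-1}G$, which supplies the coherent null-homotopy required to interpolate between the two axis inclusions across the smash $B_{k-1}G\wedge B_1G$ inside $B_kG$.
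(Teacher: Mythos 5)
Your route for (i)$\Rightarrow$(ii) is workable but genuinely different from the paper's. You propose to generalize the two\nobreakdash-factor spaces $B_i(G,G)$ and Propositions \ref{prp_product_projectivesp} and \ref{prp_Sugawara_projectivesp} to $r$ factors, compare $B_n(G,\ldots,G)$ with $B_n(G^{\times r})$, and restrict to the piece $B_{k_1}G\times\cdots\times B_{k_r}G$; this succeeds, at the cost of redoing the Iwase-type $A_n$-structure and the comparison map $\varphi$ for $r$ factors. The paper avoids all of that by staying with the two-factor space $B_{k_1+\cdots+k_r}(G,G)$ and bootstrapping: an extension $f_i$ over $B_{k_1}G\times\cdots\times B_{k_i}G$ factors through $B_{k_1+\cdots+k_i}G$ because $\cat(B_{k_1}G\times\cdots\times B_{k_i}G)\le k_1+\cdots+k_i$, and one then composes with $B_{k_1+\cdots+k_i}G\times B_{k_{i+1}}G\hookrightarrow B_{k_1+\cdots+k_r}(G,G)\to BG$. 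The Lusternik--Schnirelmann category factorization is the tool that replaces your extra machinery.

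The implication (ii)$\Rightarrow$(iii) has a genuine gap. Your key lemma rests on a map $\mu_k\colon B_{k-1}G\times B_1G\to B_kG$ restricting to the canonical inclusions on the two axes, and you propose to build it from the cofibration $B_{k-1}G\hookrightarrow B_kG$ and the contractible cone $D_{k-1}G$ alone, with no commutativity input. No such map exists in general: the obstruction to extending $B_{k-1}G\vee\Sigma G\to B_kG$ over the product is the generalized Whitehead product of the two axis inclusions, and its vanishing is precisely the statement that $G$ is a $C(k-1,1)$-space. Already for $k=2$ and $G=S^3$ the Whitehead product $[i_1,i_1]\colon S^7\to B S^3=\mathbb{H}P^\infty$ is nontrivial (it detects the failure of homotopy commutativity of $S^3$), so $\mu_2$ does not exist even with target $BG$. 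The contractibility of $D_{k-1}G$ only records that $E_{k-1}G\to B_kG$ is null; it does not trivialize this Whitehead product. To repair the step you must invoke hypothesis (ii) at every splitting: restricting the given extension $F'$ to $B_jG\times\Sigma G$ sitting inside two \emph{distinct} factors $B_{k_i}G\times B_{k_{i'}}G$ (this is where $r\ge2$ and $k_{i'}\ge1$ enter) gives an extension into $BG$, which factors through $B_{j+1}G$ since $\cat\bigl((\Sigma G)^{\times(j+1)}\bigr)\le j+1$; iterating produces maps $h_i\colon(\Sigma G)^{\times k_i}\to B_{k_i}G$ compatible with the wedge of inclusions, and $F'\circ(h_1\times\cdots\times h_r)$ is the desired Williams $C_{k_1+\cdots+k_r}$-structure. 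This is how the paper argues, and it is the only way your splitting strategy can be salvaged.
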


\begin{proof}
To prove the implication (i)$\Rightarrow$(ii), suppose $G$ is a Sugawara $C_{k_1+\cdots+k_r}$-space.
By Proposition \ref{prp_Sugawara_projectivesp}, there is a map
\[
	F\colon B_{k_1+\cdots+k_r}(G,G)\to BG
\]
which restricts to the wedge sum of the inclusions $\Sigma G\vee\Sigma G\to BG$.
Assume that we have a map $f_i\colon B_{k_1}G\times\cdots\times B_{k_i}G\to BG$ which is an extension of the wedge sum of the inclusions for $i<r$.
Since $\cat(B_{k_1}G\times\cdots\times B_{k_i}G)\le k_1+\cdots+k_i$, $f_i$ factors through $B_{k_1+\cdots+k_i}G$ up to homotopy.
We also denote this factorization by $f_i$.
Define a map $g$ as the composite
\[
	B_{k_1+\cdots+k_i}G\times B_{k_{i+1}}G\xrightarrow{\text{inclusion}}B_{k_1+\cdots+k_r}(G,G)\xrightarrow{F}BG.
\]
Then the composite
\[
	g\circ(f_i\times\id)\colon(B_{k_1}G\times\cdots\times B_{k_i}G)\times B_{k_{i+1}}G\to BG
\]
is an extension of the wedge sum of the inclusions.
Thus by induction, $G$ is a $C(k_1,\ldots,k_r)$-space.

To prove the implication (ii)$\Rightarrow$(iii), suppose $G$ is a $C(k_1,\ldots,k_r)$-space.
By Definition \ref{dfn_C(k1...kr)}, there is a map
\[
	F'\colon B_{k_1}G\times\cdots\times B_{k_r}G\to BG
\]
which restricts to the wedge sum of the inclusions $(\Sigma G)^{\vee r}\to BG$.
For each $i$, we see by induction that there is a map $h_i\colon(\Sigma G)^{\times k_i}\to B_{k_i}G$ such that the composite of $h_i$ and the inclusion $B_{k_i}G\to BG$ restricts to the wedge sum of the inclusions $(\Sigma G)^{\vee k_i}\to BG$.
Assume we have a map $h'\colon(\Sigma G)^{\times j}\to B_jG$ for some $j<k_i$ such that the composite of $h'$ and the inclusion $B_jG\to BG$ restricts to the wedge sum of the inclusions $(\Sigma G)^{\vee j}\to BG$.
Then the composite
\[
	(\Sigma G)^{\times j}\times\Sigma G\xrightarrow{h'\times\operatorname{id}}
	B_jG\times\Sigma G\xrightarrow{\text{inclusion}}
	B_{k_i}G\times B_{k_{i'}}G\xrightarrow{\text{inclusion}}
	B_{k_1}G\times\cdots\times B_{k_r}G\xrightarrow{F'}BG,
\]
is an extension of the wedge sum of the inclusions, where we can choose $i'\ne i$ since $r\ge2$ and $k_{i'}\ge1$.
This extension factors through $B_{j+1}G$ since $\cat(\Sigma G)^{\times(j+1)}\le j+1$.
Then we obtain $h_i$ by induction.
Now the composite
\[
	(\Sigma G)^{\times k_1}\times\cdots\times(\Sigma G)^{\times k_r}
	\xrightarrow{h_1\times\cdots\times h_r}
	B_{k_1}G\times\cdots\times B_{k_r}G
	\xrightarrow{F'}BG
\]
is an extension of the wedge sum of the inclusions.
This implies that $G$ is a Williams $C_k$-space.
\end{proof}

\section{Reduction of the projective space}
\label{section_thmA}

The key technique in McGibbon \cite{MR745146} and Saumell's \cite{MR1337215} work is reducing the obstruction problem of $\Sigma G$ to that of the wedge of spheres.
For our problem, we reduce the projective space $B_kG$ to some easier space.
This is the aim of this section.
It can be done by proving that $G$ is $A_k$-equivalent to a product of spheres.
This fact can be considered as a higher version of $p$-regularity.
Once it is done, Theorem \ref{mainthmA} immediately follows.

Let $G$ be a compact connected Lie group of type $\{n_1,\ldots,n_\ell\}$.
In this section, we localize spaces and maps at an odd prime $p\ge n_\ell$ and omit the symbol $(p)$ like $G=G_{(p)}$.
Then $G$ is $A_\infty$-equivalent to the product of compact connected simple Lie groups and a torus.
To prove Theorem \ref{mainthmA}, it is sufficient to consider the case when $G$ is simple.
So we suppose $G$ is simple.

First we determine the homotopy type of the projective spaces of spheres.

\begin{lem}
\label{lem_projsp_sphere}
An odd dimensional sphere $S^{2n-1}$ admits an $A_{p-1}$-form.
The cohomology of the projective space $B_kS^{2n-1}$ for $k<p$ is computed as
\[
	H^\ast(B_kS^{2n-1};\mathbb{Z}_{(p)})=\mathbb{Z}_{(p)}[x]/(x^{k+1}),
\]
where $x\in H^{2n}(B_kS^{2n-1};\mathbb{Z}_{(p)})$.
Moreover, $B_kS^{2n-1}$ has the homotopy type of the CW complex
\[
	S^{2n}\cup e^{4n}\cup\cdots\cup e^{2kn},
\]
where $e^d$ denotes a $d$-dimensional ($p$-local) cell.
\end{lem}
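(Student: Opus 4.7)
The plan is to handle the three assertions of the lemma inductively on $k$. First, the existence of an $A_{p-1}$-form on $S^{2n-1}_{(p)}$ is the classical theorem of Stasheff \cite{MR0158400}: starting from the standard $H$-structure, one extends $A_{k-1}$ to $A_k$ step by step, and the primary obstruction lies in a $p$-primary homotopy group of $S^{2n-1}$ in a degree that falls in Serre's vanishing range whenever $k\leq p-1$. For the cellular structure and additive cohomology, I would exploit the cofiber sequence
\[
	E_{k-1}S^{2n-1}\to B_{k-1}S^{2n-1}\to B_kS^{2n-1}
\]
coming from $B_k=B_{k-1}\cup_{E_{k-1}}D_{k-1}$ with $D_{k-1}$ contractible, combined with the identification $E_{k-1}S^{2n-1}\simeq(S^{2n-1})^{\ast k}\simeq S^{2nk-1}$ of the iterated join of spheres. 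Starting from $B_1S^{2n-1}=\Sigma S^{2n-1}=S^{2n}$ and iterating shows that $B_kS^{2n-1}$ is obtained by successively attaching cells of dimensions $2n,4n,\ldots,2nk$, so $H^\ast(B_kS^{2n-1};\Z_{(p)})$ is a free $\Z_{(p)}$-module with one generator in each even degree $2ni$ for $0\leq i\leq k$.

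For the multiplicative structure, the main tool is the Serre spectral sequence of the principal quasi-fibration $S^{2n-1}\to\tilde E_kS^{2n-1}\to B_kS^{2n-1}$ obtained by fibrantly replacing the canonical $A_{k+1}$-structure, which is available whenever $k\leq p-2$. Since $\tilde E_kS^{2n-1}$ has the cohomology of $S^{2n(k+1)-1}$, the transgression $d_{2n}$ sends the fundamental class $y\in H^{2n-1}(S^{2n-1})$ to a generator $x\in H^{2n}(B_kS^{2n-1})$; by multiplicativity one has $d_{2n}(x^iy)=x^{i+1}$ for $0\leq i\leq k-1$, and convergence to the cohomology of a sphere forces $x^i\ne0$ in $H^{2ni}$ for $i\leq k$. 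This yields $H^\ast(B_kS^{2n-1};\Z_{(p)})=\Z_{(p)}[x]/(x^{k+1})$ for all $k\leq p-2$.

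The main obstacle is the boundary case $k=p-1$, where the $A_p$-form needed for the above spectral sequence is not available in general. I would address it by combining the torsion-freeness of $H^\ast(B_{p-1}S^{2n-1};\Z_{(p)})$ (already established from the cell structure) with the rational identification $H^\ast(B_{p-1}S^{2n-1};\Q)=\Q[x]/(x^p)$, which follows from the fact that $S^{2n-1}_{(0)}\simeq K(\Q,2n-1)$ is genuinely $A_\infty$. This reduces the task to checking that the Hopf invariant with respect to $x$ of the top-cell attaching map $E_{p-2}S^{2n-1}\to B_{p-2}S^{2n-1}$ is a $\Z_{(p)}$-unit, a fact that is essentially built into the Stasheff construction of the projective spaces.
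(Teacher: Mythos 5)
Your route is genuinely different from the paper's, which disposes of all three assertions at once by comparing $S^{2n-1}$ with $\Omega^2S^{2n+1}$ via the double suspension $E^2\colon S^{2n-1}\to\Omega^2S^{2n+1}$, whose homotopy fiber is $(2pn-4)$-connected: the $A_{p-1}$-form is inherited from the $A_\infty$-structure of the double loop space, and $B_kS^{2n-1}$ is identified through dimension $2kn$ with the corresponding filtration stage of $\Omega S^{2n+1}$, where the divided power algebra $\Gamma[x]$ gives the ring structure immediately since $x^k=k!\,\gamma_k$ and $k!$ is a unit in $\Z_{(p)}$ for $k<p$. Your cofiber-sequence computation of the cell structure via $E_{k-1}S^{2n-1}\simeq (S^{2n-1})^{\ast k}\simeq S^{2nk-1}$ and your transgression argument in the Serre spectral sequence are both correct for $k\le p-2$.

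There are, however, two genuine gaps. First, your justification of the $A_{p-1}$-form does not work as stated: the obstruction to extending an $A_{k-1}$-form to an $A_k$-form on $S^{2n-1}$ lies in $\pi_{2nk-3}(S^{2n-1})_{(p)}$, i.e.\ in the $(2n(k-1)-2)$-stem, and this grows linearly in $k$ with slope $2n$ while Serre's vanishing range $\pi_{2n-1+j}(S^{2n-1})_{(p)}=0$ for $0<j<2p-3$ has fixed width; for $n\ge 2$ it covers only $k\lesssim 1+p/n$, far short of $k=p-1$. The obstruction groups do vanish in the range relevant to the paper ($n\le p$), but for a deeper reason: the double suspension isomorphism pushes the obstruction into the stable $(2n(k-1)-2)$-stem, which is even, while Toda's computation shows the $p$-primary stable stems below $\beta_1$ are concentrated in the odd degrees $2i(p-1)-1$. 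So you must either import this finer stable information (at which point you are already invoking the double suspension) or argue as the paper does. Second, your patch for $k=p-1$ is incomplete: torsion-freeness plus the rational isomorphism only yields $x^{p-1}\ne 0$ in $H^{2n(p-1)}(B_{p-1}S^{2n-1};\Z_{(p)})\cong\Z_{(p)}$, not that $x^{p-1}$ is a generator, which is what the lemma asserts. The unit-ness of the Hopf invariant of the top attaching map is precisely the remaining content and is not free; it is the higher, odd-primary analogue of the fact that the Hopf construction on an $H$-space multiplication has Hopf invariant one, and requires either Stasheff's analysis of cup products through the filtration quotients $B_jX/B_{j-1}X\simeq\Sigma^jX^{\wedge j}$ or, again, the comparison with $\Gamma[x]\subset H^\ast(\Omega S^{2n+1};\Z_{(p)})$.
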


\begin{proof}
This follows from the fact that the homotopy fiber of the double suspension map
\[
E^2\colon S^{2n-1}\to\Omega^2S^{2n+1}
\]
is $(2pn-4)$-connected and $\Omega^2S^{2n+1}$ is an $A_\infty$-space.
\end{proof}

As is well-known, $G$ has the ($p$-local) homotopy type of the product of spheres.
Take generators $\epsilon_i\in\pi_{2n_i-1}(G)$ of the free part of the homotopy groups.
Then the composite
\[
	S^{2n_1-1}\times\cdots\times S^{2n_\ell-1}\xrightarrow{\epsilon_1\times\cdots\times\epsilon_\ell}
		G\times\cdots\times G\xrightarrow{\mathrm{multiplication}}G.
\]
is a homotopy equivalence.
Consider a union of the product of projective spaces
\[
	B_k(S^{2n_1-1},\ldots,S^{2n_\ell-1})=\bigcup_{j_1+\cdots+j_\ell=k}B_{j_1}S^{2n_1-1}\times\cdots\times B_{j_\ell}S^{2n_\ell-1}.
\]

\begin{prp}
\label{prp_product1}
If $p>kn_\ell$ for some $k\ge1$, then the above homotopy equivalence admits an $A_k$-form.
\end{prp}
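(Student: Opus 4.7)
The plan is to apply Corollary \ref{cor_A_n-map_projectivesp}. Set $X = S^{2n_1-1}\times\cdots\times S^{2n_\ell-1}$ and let $f\colon X\to G$ be the multiplication map in the statement. Since $p>kn_\ell\ge k$ and $n_1\ge 2$ (as $G$ is simple), every sphere $S^{2n_i-1}$ is an $A_{p-1}$-space by Lemma \ref{lem_projsp_sphere}, hence an $A_k$-space, so $X$ carries the product $A_k$-structure and its $k$-th projective space $B_kX$ is defined. By Corollary \ref{cor_A_n-map_projectivesp}, it is enough to extend the composite $\Sigma X\xrightarrow{\Sigma f}\Sigma G\xrightarrow{i_1}BG$ over $B_kX$.

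First I would iterate Proposition \ref{prp_product_projectivesp} (peeling off one sphere at a time) to obtain a homotopy commutative diagram
\[
\xymatrix{
\Sigma X \ar[r] \ar[d]_-{\sum_i \Sigma p_i} & B_k X \ar[d]^-{\pi} \\
\bigvee_{i=1}^{\ell}\Sigma S^{2n_i-1} \ar[r] & B_k(S^{2n_1-1},\ldots,S^{2n_\ell-1}),
}
\]
where the lower-right space is the union $\bigcup_{j_1+\cdots+j_\ell=k}\prod_i B_{j_i}S^{2n_i-1}$. Thus, if I can construct a map $\phi\colon B_k(S^{2n_1-1},\ldots,S^{2n_\ell-1})\to BG$ extending $\bigvee_i i_1\circ\Sigma\epsilon_i$, then $\phi\circ\pi$ will be a map $B_kX\to BG$ whose restriction to $\Sigma X$ is $\sum_i i_1\circ\Sigma(\epsilon_i\circ p_i)$. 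Since $i_1$ intertwines the suspension addition with the $H$-addition of $G$, this equals $i_1\circ\Sigma f$ up to homotopy; by Lemma \ref{lem_homotopyinv_A_n-map} that is enough.

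The construction of $\phi$ is the main content, and I would do it by obstruction theory. By Lemma \ref{lem_projsp_sphere}, each $B_{j_i}S^{2n_i-1}$ has a $p$-local CW structure with cells only in even dimensions $2n_i,4n_i,\ldots,2j_in_i$, so $B_k(S^{2n_1-1},\ldots,S^{2n_\ell-1})$ inherits a CW structure with cells only in even dimensions, and the maximum dimension of a cell is $2\sum_i j_i n_i\le 2kn_\ell<2p$. The obstructions to the existence of (and differences between) extensions therefore lie in groups $H^d(\,\cdot\,;\pi_{d-1}(BG))$ with $d$ even and $d\le 2kn_\ell$. Using the $p$-local splitting $G_{(p)}\simeq\prod_i S^{2n_i-1}$ and the fact that $\pi_j(S^{2n-1})_{(p)}=0$ for $2n-1<j<2n+2p-4$, one sees that $\pi_*(BG)_{(p)}$ is concentrated in even degrees throughout the range $*<2n_1+2p-3$; since $2kn_\ell<2n_1+2p-3$ (using $p>kn_\ell$ and $n_1\ge 2$), all obstructions involve $\pi_{d-1}(BG)$ with $d-1$ odd and hence vanish. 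So $\phi$ can be built cell by cell, completing the argument.

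The main obstacle I anticipate is bookkeeping rather than ideas: making the iterated application of Proposition \ref{prp_product_projectivesp} precise and verifying that the restriction to $\Sigma X$ really is homotopic to $i_1\circ\Sigma f$ (which requires the Eckmann--Hilton-type compatibility between the $H$-addition on $G$ and the suspension addition after applying $i_1$), and giving a clean inductive framework for extending over the non-product space $B_k(S^{2n_1-1},\ldots,S^{2n_\ell-1})$ so that the obstruction-theoretic vanishing suffices on the intersections of the various $\prod_i B_{j_i}S^{2n_i-1}$.
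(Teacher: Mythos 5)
Your proposal is correct and follows essentially the same route as the paper: reduce via Proposition \ref{prp_product_projectivesp} and Corollary \ref{cor_A_n-map_projectivesp} to extending the wedge of inclusions over $B_k(S^{2n_1-1},\ldots,S^{2n_\ell-1})$, then kill all obstructions using the even-cell structure from Lemma \ref{lem_projsp_sphere} together with the vanishing of $\pi_i(BG)$ for odd $i<2p+1$. The paper states this more tersely, but the details you supply (the identification of the restriction to $\Sigma X$ with $i_1\circ\Sigma f$ and the dimension bound $2kn_\ell<2p$) are exactly the ones it leaves implicit.
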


\begin{proof}
Note that $\pi_i(BG)=0$ for odd $i<2p+1$ since $G\simeq S^{2n_1-1}\times\cdots\times S^{2n_\ell-1}$.
Then, by Lemma \ref{lem_projsp_sphere}, there are no obstructions to extending the map $B_1(S^{2n_1-1},\ldots,S^{2n_\ell-1})\to BG$ over $B_k(S^{2n_1-1},\ldots,S^{2n_\ell-1})$.
Hence by Proposition \ref{prp_product_projectivesp} and Corollary \ref{cor_A_n-map_projectivesp}, the map $S^{2n_1-1}\times\cdots\times S^{2n_\ell-1}\to G$ admits an $A_k$-form.
\end{proof}

The following proposition is used to reduce the projective space $B_kG$ to $B_k(S^{2n_1-1},\ldots,S^{2n_\ell-1})$.

\begin{prp}
\label{prp_product2}
There exists an $A_\infty$-form on $S^{2n_1-1}\times\cdots\times S^{2n_\ell-1}$ such that the restricted $A_k$-form coincides with the product $A_k$-form and the above homotopy equivalence $S^{2n_1-1}\times\cdots\times S^{2n_\ell-1}\to G$ admits an $A_\infty$-form.
\end{prp}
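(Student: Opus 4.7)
The strategy is to promote the product $A_k$-form on $X := S^{2n_1-1}\times\cdots\times S^{2n_\ell-1}$ together with the $A_k$-form on $\phi\colon X\to G$ supplied by Proposition \ref{prp_product1} to $A_\infty$-forms, exploiting the fact that $G$ is already an $A_\infty$-space. As a preliminary reduction, I would replace $G$ by an $A_\infty$-equivalent topological group (for instance the geometric realization of Kan's simplicial loop group on $BG$), so that its canonical $A_\infty$-structure is induced by a genuine principal bundle $EG\to BG$; by Lemma \ref{lem_homotopyinv_A_n-map} the product $A_k$-form and the $A_k$-form on $\phi$ transport through this replacement without loss.

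Next, I would work with $A_n$-structures rather than $A_n$-forms. The product $A_k$-structure on $X$ together with the $A_k$-form on $\phi$ yields a map from the product $A_k$-structure into a fibrant $A_k$-structure of $G$, and on projective spaces this gives a map $\bar\phi\colon B_k(S^{2n_1-1},\ldots,S^{2n_\ell-1})\to B_kG$ whose composition with $i_k$ extends $\Sigma X\xrightarrow{\Sigma\phi}\Sigma G\xrightarrow{i_1}BG$. The key step is to extend $\bar\phi$ inductively to higher projective spaces $B_{n+1}X\supset B_nX$ while defining the new levels of the $A_\infty$-structure by pulling back the filtration of the universal bundle. Since $\phi$ is a $p$-local homotopy equivalence, one may extend $B_kX\hookrightarrow BG$ by attaching the cells of $BG$ not already present, obtaining a CW filtration $B_kX\subset B_{k+1}X\subset\cdots$ with $\colim B_nX\simeq BG$; pulling back $EG$ then gives compatible total spaces $E_nX$, and the required contractible factoring spaces $D_nX$ arise from the contractibility of $EG$.

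Finally, Proposition \ref{prp_A_n-str_A_n-sp} converts the resulting fibrant $A_\infty$-structure on $X$ into an actual $A_\infty$-form, and the resulting map of $A_\infty$-structures to that of $G$ produces, via Proposition \ref{prp_IwaseMimura}, a compatible $A_\infty$-form on $\phi$; by construction the $A_k$-truncation of the $A_\infty$-form on $X$ coincides with the product $A_k$-form. The main obstacle I anticipate is verifying that the inductive cell attachment in the second paragraph yields a bona fide $A_\infty$-structure in the sense of the paper, that is, that the cofibration conditions hold and that the contractible spaces $D_nX$ can be chosen so as to fit together compatibly at every stage while preserving the identification of the $A_k$-truncation with the product structure.
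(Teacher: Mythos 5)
Your overall strategy---transferring the $A_\infty$-structure of $G$ back to $X=S^{2n_1-1}\times\cdots\times S^{2n_\ell-1}$ along the $A_k$-equivalence supplied by Proposition \ref{prp_product1}---is the right idea, and it is also what the paper does; but the paper implements it by direct obstruction theory on the forms themselves: given $A_n$-forms on $X$ and on $\phi\colon X\to G$ for some $n\ge k$, one transports the $(n{+}1)$-st operation of $G$ back along a homotopy inverse of $\phi$ to define $m^X_{n+1}$ and $\phi_{n+1}$ simultaneously, the obstructions vanishing because $\phi$ induces isomorphisms on all homotopy groups and $G$ already carries the higher structure. Your detour through $A_n$-structures and projective spaces can be made to work, but as written it has two genuine soft spots.

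First, ``attaching the cells of $BG$ not already present'' does not by itself produce an $A_\infty$-structure: the filtration $B_kX\subset B_{k+1}X\subset\cdots$ must satisfy the pushout and contractibility conditions in the definition, which force each cofiber $B_iX/B_{i-1}X$ to have the homotopy type of $\Sigma E_{i-1}X\simeq\Sigma(X^{\ast i})$; an arbitrary (for instance skeletal) CW filtration of $BG$ extending $B_kX$ will not satisfy this. The repair is to use that $\bar\phi\colon B_kX\to B_kG$ is a homotopy equivalence (since $\phi$ is an $A_k$-equivalence) and to build $B_iX$ for $i>k$ by iterated mapping cylinders over the canonical filtration $B_kG\subset B_{k+1}G\subset\cdots$, so that the new filtration is levelwise equivalent to that of $G$, and then pull back a fibrant replacement of $EG\to BG$. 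Second, Proposition \ref{prp_A_n-str_A_n-sp} as stated produces \emph{some} $A_\infty$-form from a fibrant $A_\infty$-structure; it does not assert that this form extends a prescribed partial form, so your closing claim that ``by construction the $A_k$-truncation coincides with the product $A_k$-form'' does not follow from the cited statement. You must instead start Stasheff's inductive construction at level $k$ from the given product $A_k$-form and the given map of $A_k$-structures and extend upward from there. With these two repairs the argument closes, though at that point you have essentially reconstructed, in more elaborate language, the obstruction-theoretic transfer that the paper's proof invokes in one line.
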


\begin{proof}
By Proposition \ref{prp_product1}, the homotopy equivalence $S^{2n_1-1}\times\cdots\times S^{2n_\ell-1}\to G$ admits an $A_k$-form with respect to the product $A_k$-form of $S^{2n_1-1}\times\cdots\times S^{2n_\ell-1}$.
Since this map is a homotopy equivalence, one can observe that there are no obstructions to constructing $A_\infty$-forms on the map and on $S^{2n_1-1}\times\cdots\times S^{2n_\ell-1}$.
\end{proof}

Let us denote the $A_\infty$-space $S^{2n_1-1}\times\cdots\times S^{2n_\ell-1}$ equipped with the above $A_\infty$-form by $H$.

\begin{proof}[Proof of Theorem \ref{mainthmA}]
Let $G$ be a compact connected simple Lie group of type $\{n_1,\ldots,n_\ell\}$ and take a prime $p$ and a positive integer $k$ such as $p>kn_\ell$.
Then, by Propositions \ref{prp_product_projectivesp}, \ref{prp_Sugawara_projectivesp} and \ref{prp_product2}, $G$ is a Sugawara $C_k$-space if the composite
\[
	B_1(S^{2n_1-1},\ldots,S^{2n_\ell-1})\vee B_1(S^{2n_1-1},\ldots,S^{2n_\ell-1})\to BH\vee BH
		\xrightarrow{\nabla}BH
\]
extends over the union $\bigcup_{k_1+k_2=k}B_{k_1}(S^{2n_1-1},\ldots,S^{2n_\ell-1})\times B_{k_2}(S^{2n_1-1},\ldots,S^{2n_\ell-1})$.
Now it does by Lemma \ref{lem_projsp_sphere} since $\pi_i(BG)=0$ for odd $i<2p+1$ and $p>kn_\ell$.
Thus Theorem \ref{mainthmA} follows.
\end{proof}

\section{Gauge groups}
\label{section_gauge}

In this section, we recall the basic definitions and facts about gauge groups.

\begin{dfn}
Given a principal $G$-bundle $P\to B$, a map $P\to P$ is said to be an \textit{automorphism} if $f$ is $G$-equivariant and induces the identity on $B$.
The topological group consisting of automorphisms on $P$ is denoted by $\mathcal{G}(P)$ and called the \textit{gauge group}.
\end{dfn}

Let $P\to B$ be a principal $G$-bundle.
The associated bundle
\[
	\ad P=(P\times G)/\sim
\]
defined by the equivalence relation
\[
	(ug,x)\sim(u,gxg^{-1})
\]
is called the \textit{adjoint bundle} of $P$.
It is naturally a fiberwise topological group.
Thus the space of sections $\Gamma(\ad P)$ is a topological group.
It is not difficult to see that $\Gamma(\ad P)$ is naturally isomorphic to $\mathcal{G}(P)$.

The weak homotopy type of the classifying space of a gauge group is studied by Gottlieb \cite{MR0309111}.

\begin{prp}
Let $P$ be a principal $G$-bundle over a CW complex $B$, which is classified by a map $\alpha\colon B\to BG$.
Then, the classifying space $B\mathcal{G}(P)$ is weakly homotopy equivalent to the path-component $\Map(B,BG)_\alpha$ of $\Map(B,BG)$ based at $\alpha\in\Map(B,BG)$.
\end{prp}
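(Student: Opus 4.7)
The plan is to realize $B\mathcal{G}(P)$ as the base of a principal $\mathcal{G}(P)$-fibration with weakly contractible total space, and to identify this base with $\Map(B,BG)_\alpha$. Concretely, I would introduce $E = \Map^G(P, EG)$, the space of $G$-equivariant maps from $P$ to $EG$, and the restriction map $r\colon E \to \Map(B,BG)$ that sends a $G$-map $\tilde f\colon P\to EG$ to the induced map on orbit spaces $B = P/G \to EG/G = BG$. Since $\tilde f$ exhibits $P$ as the pullback of $EG \to BG$ along $r(\tilde f)$, the image of $r$ lies in $\Map(B,BG)_\alpha$.

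Next I would equip $E$ with the action of $\mathcal{G}(P)$ given by $(\phi\cdot\tilde f)(x) = \tilde f(\phi^{-1}(x))$ and verify two properties. First, $r$ is invariant under this action and realizes $\Map(B,BG)_\alpha$ as the set-theoretic orbit space: any two $G$-equivariant lifts $\tilde f$, $\tilde g$ of the same base map $B\to BG$ differ by a unique automorphism of $P$, because the relation $\tilde f(x)\leftrightarrow \tilde g(x)$ is forced to be $G$-equivariant and to cover the identity on $B$. Second, the action is free for the same reason. The third ingredient is that $E$ is weakly contractible: via the natural identification $\Map^G(P,EG)\cong \Gamma(P\times_G EG)$, it is the space of sections of a bundle over the CW complex $B$ whose fibers are weakly contractible copies of $EG$, so a cell-by-cell obstruction argument kills all homotopy groups of the section space.

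To conclude, I would argue that $r\colon E\to\Map(B,BG)_\alpha$ is a principal $\mathcal{G}(P)$-fibration up to weak equivalence, so that the long exact sequence
\begin{equation*}
\cdots\to\pi_{n+1}(\Map(B,BG)_\alpha)\to\pi_n(\mathcal{G}(P))\to\pi_n(E)\to\pi_n(\Map(B,BG)_\alpha)\to\cdots
\end{equation*}
together with $\pi_\ast(E) = 0$ yields $\pi_n(\Map(B,BG)_\alpha)\cong\pi_{n-1}(\mathcal{G}(P))\cong\pi_n(B\mathcal{G}(P))$. Since these isomorphisms are induced by a genuine map of spaces (the comparison with any model of $E\mathcal{G}(P)\to B\mathcal{G}(P)$), the result follows.

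The main obstacle is the fibration property of $r$: in the point-set category of mapping spaces one must either produce local sections over neighborhoods in $\Map(B,BG)_\alpha$ (using partitions of unity coming from trivializations of $P\times_G EG \to B$ over finite subcomplexes of $B$) or replace $r$ by a Serre fibration while controlling the fibers up to weak equivalence. Once $r$ is known to be a $\mathcal{G}(P)$-principal fibration with weakly contractible total space, the identification $B\mathcal{G}(P)\simeq \Map(B,BG)_\alpha$ is formal; the subtlety lies entirely in the point-set topology on mapping spaces.
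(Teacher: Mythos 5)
Your argument is correct, but note that the paper offers no proof of this proposition at all: it is stated as Gottlieb's theorem with only a citation to his work. What you have written is precisely the standard argument from that source (the weakly contractible space $\Map^G(P,EG)$ of equivariant maps fibering over $\Map(B,BG)_\alpha$ with fiber $\mathcal{G}(P)$), and it is sound modulo the point-set fibration issue that you correctly identify and for which your proposed fixes are the usual ones.
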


By \cite[Theorem 3.11, Chapter II]{MR0312509}, if a $p$-localization $\ell\colon X\to X_{(p)}$ of a nilpotent space $X$ is given and $B$ is a finite complex, the induced map $\Map(B,X)_f\to\Map(B,X_{(p)})_{\ell\circ f}$ between the path components containing $f$ and $\ell\circ f$ respectively is also a $p$-localization for any $f\colon B\to X$.
This implies the following corollary.
We recall that even if $\mathcal{G}(P)$ is not path-connected, we define $\mathcal{G}(P)_{(p)}$ as $\Omega(B\mathcal{G}(P)_{(p)})$.

\begin{cor}
Suppose $G$ is a path-connected topological group having the homotopy type of a CW complex.
Let $P$ be a principal $G$-bundle over a finite CW complex $B$, which is classified by a map $\alpha\colon B\to BG$.
Then, the classifying space $B(\mathcal{G}(P)_{(p)})$ is weakly homotopy equivalent to the path-component $\Map(B,BG_{(p)})_{\ell\circ\alpha}$, where $\ell\colon BG\to BG_{(p)}$ is a $p$-localization.
\end{cor}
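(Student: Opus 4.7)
The plan is to combine Gottlieb's proposition stated just above with the cited theorem \cite[Theorem 3.11, Chapter II]{MR0312509} on $p$-localization of function spaces, together with the Remark that defines $\mathcal{G}(P)_{(p)} := \Omega(B\mathcal{G}(P)_{(p)})$.

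First I would apply Gottlieb's proposition to identify $B\mathcal{G}(P)$ with the path component $\Map(B,BG)_\alpha$ up to weak homotopy equivalence. Because $G$ is path-connected with the homotopy type of a CW complex, $BG$ is simply connected (hence nilpotent) and has the homotopy type of a CW complex. Together with the finiteness of $B$, this places us in the setting of \cite[Theorem 3.11, Chapter II]{MR0312509}: every component of $\Map(B,BG)$ is nilpotent, so $B\mathcal{G}(P)$ admits a classical $p$-localization, and the map
\[
	\Map(B,BG)_\alpha \longrightarrow \Map(B,BG_{(p)})_{\ell\circ\alpha}
\]
induced by $\ell\colon BG \to BG_{(p)}$ is such a $p$-localization. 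Combined with Gottlieb's identification this gives a weak homotopy equivalence
\[
	B\mathcal{G}(P)_{(p)} \simeq \Map(B,BG_{(p)})_{\ell\circ\alpha}.
\]

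Finally I would reconcile $B\mathcal{G}(P)_{(p)}$ with $B(\mathcal{G}(P)_{(p)})$. By the Remark, $\mathcal{G}(P)_{(p)} = \Omega(B\mathcal{G}(P)_{(p)})$, and since the classifying space $B\mathcal{G}(P)_{(p)}$ is path-connected, the canonical counit map $B\Omega(B\mathcal{G}(P)_{(p)}) \to B\mathcal{G}(P)_{(p)}$ is a weak homotopy equivalence. Chaining this with the previous display yields the desired weak equivalence $B(\mathcal{G}(P)_{(p)}) \simeq \Map(B,BG_{(p)})_{\ell\circ\alpha}$.

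There is essentially no technical obstacle here: once Gottlieb's proposition is in hand, this corollary is a formal consequence of the HMR $p$-localization theorem for mapping spaces and the standard equivalence $B\Omega X \simeq X$ for path-connected $X$. The only point meriting a brief check is that $B\mathcal{G}(P)$ is nilpotent so that classical $p$-localization applies, and this is immediate from its identification with a component of $\Map(B,BG)$ with $B$ finite and $BG$ simply connected.
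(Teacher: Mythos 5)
Your proposal is correct and follows exactly the route the paper intends: the paper offers no separate proof, deriving the corollary directly from Gottlieb's proposition together with the cited localization theorem for mapping spaces \cite[Theorem 3.11, Chapter II]{MR0312509} and the Remark's definition $\mathcal{G}(P)_{(p)}=\Omega(B\mathcal{G}(P)_{(p)})$, which is precisely your argument. Your explicit checks (nilpotency of the mapping-space component and the equivalence $B\Omega X\simeq X$ for path-connected $X$) are the right details to verify and are consistent with the paper.
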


\section{Proof of Theorems \ref{mainthmB}, \ref{mainthmB'} and \ref{mainthmC}}
\label{section_thmBC}

As in the theorems, let $G$ be a compact connected simple Lie group of type $\{n_1,\ldots,n_\ell\}$, $p$ a prime and $n$, $k$ positive integers.
In this section, we again localize all spaces and maps at $p$ and omit the localization symbol $(p)$.

First we prove that $\mathcal{G}(E_nG)$ is a Sugawara $C_k$-space if $p>(n+k)n_\ell$.
When $k=1$, we have nothing to prove.
Let us consider the case when $k\ge2$.
By Theorem \ref{mainthmA}, $G$ is a $C(k,n)$-space.
Then the wedge sum of the inclusions
\[
	\Sigma G\vee\Sigma G\to BG
\]
extends over the product $B_kG\times B_nG$.
Combining with \cite[Corollary 1.7]{MR2678992}, this implies that the adjoint bundle $\ad E_nG$ is fiberwise $A_k$-equivalent to the trivial bundle $B_nG\times G$.
For the notions of fiberwise $A_n$-theory we need here, see \cite[Section 3]{MR2678992}.
Consider the following homotopy commutative diagram of fiberwise spaces:
\[
\xymatrix{
	\ad E_nG\times_{B_nG}\ad E_nG \ar[rr]^-{\text{multiplication}} \ar[d]_-{\simeq}
		& & \ad E_nG \ar[d]^-{\simeq} \\
	B_nG\times(G\times G) \ar[rr]^-{\text{multiplication}}
		& & B_nG\times G,
}
\]
where the vertical arrows are fiberwise $A_k$-equivalences.
Since $G$ is a Sugawara $C_k$-space, the bottom arrow is a fiberwise $A_k$-map.
Thus we obtain the following lemma.

\begin{lem}
\label{lem_fwSugawara}
The adjoint bundle $\ad E_nG$ is a fiberwise Sugawara $C_k$-space, that is, the fiberwise multiplication
\[
	\ad E_nG\times_{B_nG}\ad E_nG\to\ad E_nG
\]
is a fiberwise $A_k$-map.
\end{lem}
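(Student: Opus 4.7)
The plan is to assemble the ingredients already laid out just before the statement. By Theorem \ref{mainthmA} and the hypothesis $p>(n+k)n_\ell$, the Lie group $G$ itself is a Sugawara $C_k$-space, so its multiplication $m\colon G\times G\to G$ is an $A_k$-map with respect to the product $A_k$-form. Pulling this back to the trivial bundle $B_nG\times G$, the fiberwise multiplication $\mathrm{id}_{B_nG}\times m$ on $B_nG\times(G\times G)\to B_nG\times G$ is visibly a fiberwise $A_k$-map, because it is the product of the identity map on the base with an $A_k$-map on the fiber.

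Next, I would invoke the fiberwise $A_k$-equivalence $\Phi\colon\ad E_nG\xrightarrow{\simeq}B_nG\times G$ produced in the paragraph above the lemma from \cite[Corollary 1.7]{MR2678992}; this is exactly the vertical equivalence appearing on the right of the diagram. Taking the fiberwise product over $B_nG$ yields a fiberwise $A_k$-equivalence $\Phi\times_{B_nG}\Phi\colon\ad E_nG\times_{B_nG}\ad E_nG\to B_nG\times(G\times G)$, where the target carries the product fiberwise $A_k$-structure. The homotopy commutative square displayed just before the statement then exhibits the fiberwise multiplication on $\ad E_nG$ as the conjugate of $\mathrm{id}_{B_nG}\times m$ by these fiberwise $A_k$-equivalences, up to fiberwise homotopy.

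The final step is to conclude that the fiberwise multiplication on $\ad E_nG$ itself admits a fiberwise $A_k$-form. This uses the fiberwise counterpart of the standard fact that the homotopy inverse of an $A_k$-equivalence is again an $A_k$-map (Lemma \ref{lem_homotopyinv_A_n-map} for the absolute case) together with the fact that composition of fiberwise $A_k$-maps is a fiberwise $A_k$-map. Both statements are part of the fiberwise $A_k$-theory developed in \cite[Section 3]{MR2678992}. One then chooses a fiberwise $A_k$ homotopy inverse of $\Phi$ and of $\Phi\times_{B_nG}\Phi$, so that the composite conjugate is a fiberwise $A_k$-map fiberwise homotopic to the honest fiberwise multiplication on $\ad E_nG$; applying a fiberwise version of Lemma \ref{lem_homotopyinv_A_n-map} transports the fiberwise $A_k$-form along the homotopy.

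The main obstacle is the last step: verifying that these fiberwise analogs of the absolute $A_k$-transport statements are in place in the required generality. This is essentially bookkeeping on top of \cite{MR2678992}, but one has to be careful that the fiberwise $A_k$-equivalence constructed from extending $\Sigma G\vee\Sigma G\to BG$ over $B_kG\times B_nG$ is compatible with both the product fiberwise $A_k$-structure on $B_nG\times(G\times G)$ and the diagonal fiberwise $A_k$-structure on $\ad E_nG\times_{B_nG}\ad E_nG$, so that the conjugation is genuinely an equality of $A_k$-structures and not merely of the underlying maps.
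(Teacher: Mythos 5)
Your proposal is correct and follows essentially the same route as the paper: both establish that the adjoint bundle is fiberwise $A_k$-equivalent to the trivial bundle $B_nG\times G$ via the $C(k,n)$-property and \cite[Corollary 1.7]{MR2678992}, observe that the trivial bundle is a fiberwise Sugawara $C_k$-space because $G$ is one by Theorem \ref{mainthmA}, and transport the structure across the equivalence using the fiberwise $A_k$-theory of \cite[Section 3]{MR2678992}. Your closing caveat about the fiberwise transport lemmas is exactly the point the paper delegates to that reference, so no new gap arises.
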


This implies that the multiplication map
\[
	\mathcal{G}(E_nG)\times\mathcal{G}(E_nG)\to\mathcal{G}(E_nG)
\]
is an $A_k$-map.
Hence $\mathcal{G}(E_nG)$ is a Sugawara $C_k$-space.

For a space $B$ such that $\cat B=n$ and a principal $G$-bundle $P$ over $B$, the classifying map $B\to BG$ factors through $B_nG$.
Then by Lemma \ref{lem_fwSugawara}, the gauge group $\mathcal{G}(P)$ is a Sugawara $C_k$-space.
This completes the proof of Theorem \ref{mainthmB'}.

Next, we observe the non-commutativity of $\mathcal{G}(E_nG)$.
We suppose $(n+1)n_\ell<p<(n+k)n_\ell$.
Since $(n+1)n_\ell<p$, the wedge sum of the inclusions
\[
	\Sigma G\vee B_nG\to BG
\]
extends over the product $\Sigma G\times B_nG$.
Taking the adjoint, we obtain the map
\[
	f\colon\Sigma G\to\Map(B_nG,BG)_{i_n}.
\]
Consider the extension problem of the map
\[
	(\Sigma G)^{\vee k}\xrightarrow{(f,\ldots,f)}\Map(B_nG,BG)_{i_n}
\]
over the product $(\Sigma G)^{\times k}$.
If $\mathcal{G}(E_nG)$ is a Williams $C_k$-space, this extends.
Taking the adjoint, we have the map
\[
	(\Sigma G)^{\vee k}\times B_nG\to BG,
\]
which is an extension of the wedge sum of the inclusions $(\Sigma G)^{\vee k}\vee B_nG\to BG$.
This does not extend over the product since $G$ is not a $C(r_1,\ldots,r_k,n)$-space for $r_1=\cdots=r_k=1$.
Therefore, the gauge group $\mathcal{G}(E_nG)$ is not a Williams $C_k$-space.

Now the proof of Theorem \ref{mainthmC} might be obvious.
Let $P$ be a principal $G$-bundle over $S^{2n_i}$ classified by $\alpha\colon S^{2n_i}\to BG$ and $k\ge 2$ an integer satisfying $p\ge kn_\ell+n_i$.
One can prove by the analogous argument that the wedge sum $S^{2n_i}\vee\Sigma G\to BG$ of $\alpha$ and the inclusion extends over the product $S^{2n_i}\times B_kG$.
Then the adjoint bundle $\ad P$ is fiberwise $A_k$-equivalent to the trivial bundle $S^{2n_i}\times G$.
Since $G$ is a Sugawara $C_k$-space, then the fiberwise multiplication
\[
	\ad P\times_{S^{2n_i}}\ad P\to\ad P
\]
is a fiberwise $A_k$-map.
Therefore, the gauge group $\mathcal{G}(P)$ is a Sugawara $C_k$-space.

\section{\texorpdfstring{$5$-local higher homotopy commutativity of $\mathrm{G}_2$}{5-local higher homotopy commutativity of G2}}
\label{section_G2}

In this section, we prove Theorem \ref{mainthmD}. 
Hereafter, we localize all spaces and maps at $p=5$.
McGibbon \cite{MR745146} proved that $\mathrm{G}_2$ is homotopy commutative.
But Saumell \cite{MR1337215} proved that $\mathrm{G}_2$ is not a Williams $C_5$-space.

By the results in \cite[Lecture 4]{MR0251716}, there is a loop map
\[
	E\colon BU\to BU
\]
characterized by the homotopy commutative diagrams
\[
\xymatrix{
	BU \ar[r]^E \ar[d]
		& BU \ar[d]^-{\ch_{n}} \\
	\ast \ar[r]
		& K(\mathbb{Q},2n)
}
\qquad
\xymatrix{
	BU \ar[r]^E \ar[d]^-{\ch_{4n-2}}
		& BU \ar[d]^-{\ch_{4n-2}} \\
	K(\mathbb{Q},8n-4) \ar@{=}[r]
		& K(\mathbb{Q},8n-4)
}
\]
where the left square holds for $n\not\equiv2\,\operatorname{mod}\,4$ and $\ch_n$ denotes the $n$-th universal Chern character.
From this, we have $E^2=E$.
We consider a telescope
\[
	B'=\hocolim(B^2U\xrightarrow{BE}B^2U\xrightarrow{BE}\cdots)
\]
and define a loop space
\[
	B=\Omega B'.
\]
The canonical map $B^2U\to B'$ induces a loop map $\pi\colon BU\to B$.
Note that $B$ also has the homotopy type of a telescope:
\[
	B\simeq\hocolim(BU\xrightarrow{E}BU\xrightarrow{E}\cdots).
\]
We can compute the cohomology group as
\[
	H^\ast(B;\mathbb{Z}_{(5)})=\mathbb{Z}_{(5)}[z_4,z_{12},z_{20},z_{28},\ldots]
\]
such that $\pi^\ast z_{8n-4}=E^\ast c_{4n-2}$ for the Chern class $c_{4n-2}\in H^{8n-4}(BU;\mathbb{Z}_{(5)})$ by the Newton identities.
\begin{lem}
\label{lem_Ec}
The following congruences modulo the ideal $(c_k\mid k\colon\text{odd or }k\ge7)+((c_2,c_6)^2+(c_4))^2$ hold, that is, the following congruences are modulo monomials containing $c_k$ for odd $k$ or $k\ge7$ or $c_2^pc_4^qc_6^r$ for $p+2q+r=4$:
\begin{align*}
	E^\ast c_2\equiv& c_2,
		& E^\ast c_4\equiv& \frac{1}{2}c_2^2,
		& E^\ast c_6\equiv& c_6-c_4c_2+\frac{1}{2}c_2^3,\\
	E^\ast c_8\equiv& c_6c_2,
		& E^\ast c_{10}\equiv& -c_6c_4+\frac{3}{2}c_6c_2^2,
		& E^\ast c_{12}\equiv& \frac{1}{2}c_6^2,\\
	E^\ast c_{14}\equiv& \frac{3}{2}c_6^2c_2,
		& E^\ast c_{16}\equiv& 0,
		& E^\ast c_{18}\equiv& \frac{1}{2}c_6^3.
\end{align*}
\end{lem}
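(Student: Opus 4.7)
The plan is to compute $E^\ast c_n$ inductively from Newton's identity together with the known action of $E$ on Chern characters. Let $p_n$ denote the $n$-th power sum of the Chern roots, so that $p_n = n!\,\ch_n$ in $H^\ast(BU;\mathbb{Q})$. Newton's identity
\[
	n c_n = \sum_{i=1}^n (-1)^{i-1} c_{n-i}\,p_i,
\]
together with the defining diagrams of $E$, which give $E^\ast p_n = p_n$ when $n \equiv 2 \pmod{4}$ and $E^\ast p_n = 0$ otherwise, yields, writing $C_n := E^\ast c_n$, a recursion that expresses $C_n$ in terms of the $C_j$ with $j<n$ and the $p_i$ with $i\le n$ and $i\equiv 2\pmod{4}$.

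A first easy observation is that $C_n = 0$ for every odd $n$: in the recursion for odd $n$, each summand $C_{n-i}\cdot E^\ast p_i$ has either $n-i$ odd (so $C_{n-i}=0$ inductively) or $i$ odd (so $E^\ast p_i=0$). Consequently, modulo the ideal $J := (c_k\mid k\text{ odd or }k\ge 7) + ((c_2,c_6)^2+(c_4))^2$, only the classes $c_2, c_4, c_6$ can contribute, and a monomial $c_2^p c_4^q c_6^r$ survives in $\mathbb{Z}_{(5)}[c_2,c_4,c_6]/J$ exactly when $p+2q+r\le 3$. This drastically restricts the shape of the answer.

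The remaining work splits into two bookkeeping steps. First, compute $p_{4m-2}$ modulo $J$ for $m=1,2,3,4,5$ by iterating Newton's identity in the quotient ring; since monomials of weighted exponent sum $\ge 4$ are discarded, the results collapse to short expressions
\[
	p_2\equiv -2c_2,\quad p_6\equiv -2c_2^3+6c_2c_4-6c_6,\quad p_{10}\equiv -10c_2^2c_6+10c_4c_6,
\]
\[
	p_{14}\equiv -14c_2c_6^2,\qquad p_{18}\equiv -6c_6^3.
\]
Second, substitute these into the recursion for $C_n$ successively for $n=2,4,6,\ldots,18$; at each step almost every cross-term lands in $I^2$ and is discarded, and what remains matches precisely the right-hand side of the lemma (e.g.\ for $n=12$ only the $c_6\cdot p_6$ contribution in the expansion of $-C_6\cdot 720\xi_2$ survives, producing $\tfrac12 c_6^2$).

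The principal difficulty is not conceptual but organisational: nine recursive computations must be carried out in sequence, and one must stay alert to which cross-terms fall into $I^2$. Because $J$ is so large, however, each individual step turns out to be very short, and no delicate cancellation is required beyond what is already visible from Newton's identity itself.
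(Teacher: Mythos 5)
Your proposal is correct and follows essentially the same route as the paper: both apply Newton's identity to get the recursion $E^\ast c_n=-\tfrac{1}{n}\sum_{i\equiv 2\ (4)}(E^\ast c_{n-i})s_i$ (using that $E$ kills the power sums $s_i$ for $i\not\equiv 2\bmod 4$ and fixes the others), evaluate the needed power sums via Girard's formula, and grind through the nine cases modulo the stated ideal. The only blemish is the garbled expression ``$-C_6\cdot 720\xi_2$'' in your $n=12$ example, but the intended term $-\tfrac{1}{12}(E^\ast c_6)s_6\equiv\tfrac12 c_6^2$ and all your intermediate values of $p_2,p_6,p_{10},p_{14},p_{18}$ check out.
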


\begin{proof}
These congruences can be verified by the equalities
\begin{align*}
	E^\ast c_{4n-2}=&-\frac{1}{4n-2}((E^\ast c_{4n-4})s_2+\cdots+(E^\ast c_4)s_{4n-6}+s_{4n-2}),\\
	E^\ast c_{4n}=&-\frac{1}{4n}((E^\ast c_{4n-2})s_2+\cdots+(E^\ast c_2)s_{4n-2})
\end{align*}
and Girard's formula
\[
	s_i=\sum_{r_1+2r_2+\cdots+ir_i=i}(-1)^{i+r_1+\cdots+r_i}\frac{i(r_1+\cdots+r_i-1)!}{r_1!\cdots r_i!}c_1^{r_1}\cdots c_i^{r_i}.
\]
\end{proof}

We also need the indecomposables as in the following lemma.
The proof is similar to the previous lemma.

\begin{lem}
\label{lem_Ec_indecomp}
We have the congruence $E^\ast c_{4n-2}\equiv c_{4n-2}$ mod decomposables for any integer $n\ge1$.
\end{lem}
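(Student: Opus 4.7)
The plan is to extract just the indecomposable part of the calculation underlying Lemma \ref{lem_Ec} using the defining property of $E$. Recall that the Newton power sum $s_{4n-2}=(4n-2)!\,\ch_{4n-2}$ is an integral cohomology class, and that by construction $E^\ast\ch_{4n-2}=\ch_{4n-2}$ in $H^\ast(BU;\mathbb{Q})$. Since $H^\ast(BU;\mathbb{Z})$ is torsion-free, this rational identity lifts to the equality $E^\ast s_{4n-2}=s_{4n-2}$ in $H^\ast(BU;\mathbb{Z}_{(5)})$.

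Next I would apply Girard's formula to expand $s_{4n-2}$ as a polynomial in Chern classes. The only monomial with a single factor is the one with $r_{4n-2}=1$ and all other $r_j=0$, which contributes $(-1)^{4n-1}(4n-2)c_{4n-2}=-(4n-2)c_{4n-2}$; every other term involves $r_1+\cdots+r_i\ge 2$ factors and is therefore a product of classes of positive degree. Hence $s_{4n-2}\equiv -(4n-2)c_{4n-2}$ modulo decomposables. Applying $E^\ast$ and using that a ring homomorphism preserves the ideal of decomposable classes, this yields
\[
-(4n-2)\,E^\ast c_{4n-2}\equiv E^\ast s_{4n-2}=s_{4n-2}\equiv -(4n-2)\,c_{4n-2}
\]
modulo decomposables.

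Finally, I would cancel the scalar $(4n-2)$. The quotient of $H^{8n-4}(BU;\mathbb{Z}_{(5)})$ by decomposables is a free $\mathbb{Z}_{(5)}$-module of rank one generated by the class of $c_{4n-2}$, and since $\mathbb{Z}_{(5)}$ is an integral domain, multiplication by the nonzero integer $4n-2$ is injective on it regardless of whether $5\mid 4n-2$. Hence $E^\ast c_{4n-2}\equiv c_{4n-2}$ modulo decomposables, as desired. There is essentially no obstacle: unlike Lemma \ref{lem_Ec}, no fine bookkeeping of monomials is required because every decomposable term is discarded immediately, so the argument reduces to the single observation that the leading Chern class coefficient in $s_{4n-2}$ is $\pm(4n-2)\ne 0$.
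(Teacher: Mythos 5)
Your proof is correct and follows essentially the same route as the paper's, which simply declares the argument ``similar to the previous lemma'': both rest on the facts that $E^\ast$ fixes the power sums $s_{4n-2}$ (equivalently, the recursion for $E^\ast c_{4n-2}$ coming from the Newton identities) and that $s_m\equiv (-1)^{m+1}m\,c_m$ modulo decomposables by Girard's formula. Your extra care in cancelling the factor $4n-2$ in the rank-one free indecomposable quotient, rather than dividing by it, is a nice touch since $5$ may divide $4n-2$, but it does not change the substance of the argument.
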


Now we recall elementary properties of the exceptional Lie group $\mathrm{G}_2$.
The following diagram of inclusions commutes:
\[
\xymatrix{
	\SU(3) \ar[r] \ar[d]_-{\text{realification}}
		& \mathrm{G}_2 \ar[d] \ar[rd]^-{\rho} \\
	\Spin(6) \ar[r]
		& \Spin(7) \ar[r]
		& \SU(7)
}
\]
where $\Spin(7)/\mathrm{G}_2\cong S^7$.
As in \cite[Section 4]{MR807104}, the following proposition holds.
\begin{prp}
\label{prp_Watanabe}
The cohomology of $B\mathrm{G}_2$ is computed as
\[
	H^\ast(B\mathrm{G}_2;\mathbb{Z}_{(5)})=\mathbb{Z}_{(5)}[y_4,y_{12}]
\]
such that the following equality holds:
\[
	\rho^\ast(c_i)=
	\begin{cases}
		-y_{2i}				& i=2,6	\\
		\tfrac{1}{4}y_4^2	& i=4	\\
		0					& \text{otherwise.}
	\end{cases}
\]
\end{prp}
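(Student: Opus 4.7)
The plan is to reduce the computation to the maximal torus $T^2\subset\mathrm{G}_2$ and extract the Chern classes of $\rho$ from the weight decomposition of the $7$-dimensional representation. The argument proceeds in three steps.

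First, I would establish that $H^\ast(B\mathrm{G}_2;\mathbb{Z}_{(5)})=\mathbb{Z}_{(5)}[y_4,y_{12}]$. By a classical result of Borel, the integral (co)homology of $\mathrm{G}_2$ has only $2$-torsion, so $H^\ast(\mathrm{G}_2;\mathbb{Z}_{(5)})=\Lambda_{\mathbb{Z}_{(5)}}(x_3,x_{11})$, and Borel's transgression theorem applied to the universal bundle $\mathrm{G}_2\to E\mathrm{G}_2\to B\mathrm{G}_2$ yields a polynomial ring on the two transgressions, of degrees $4$ and $12$. The vanishing $\rho^\ast c_i=0$ for $i\in\{1,3,5,7\}$ is then automatic for degree reasons, since no monomial in $y_4,y_{12}$ has total degree $2$, $6$, $10$, or $14$.

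Second, because the cohomology is torsion free, the inclusion $j\colon BT^2\hookrightarrow B\mathrm{G}_2$ induces an injection on $H^\ast(-;\mathbb{Z}_{(5)})$ with image the Weyl-invariant subring of $H^\ast(BT^2;\mathbb{Z}_{(5)})=\mathbb{Z}_{(5)}[u,v]$. The key representation-theoretic input is that, for suitable coordinates $u,v$, the weights of the $7$-dimensional irreducible representation of $\mathrm{G}_2$ are $0,\pm u,\pm v,\pm(u+v)$: these comprise the zero weight together with the six short roots.

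Third, the total Chern class of $\rho|_{T^2}$ therefore factors as
\[
    \sum_{i=0}^{7}(j^\ast\rho^\ast c_i)\,x^i=(1-u^2x^2)(1-v^2x^2)(1-(u+v)^2x^2),
\]
and reading off the coefficients yields
\[
    j^\ast\rho^\ast c_2=-2(u^2+uv+v^2),\quad j^\ast\rho^\ast c_4=(u^2+uv+v^2)^2,\quad j^\ast\rho^\ast c_6=-u^2v^2(u+v)^2,
\]
where the expression for $c_4$ uses the identity $u^2v^2+u^2(u+v)^2+v^2(u+v)^2=(u^2+uv+v^2)^2$. Since $u^2+uv+v^2$ and $u^2v^2(u+v)^2$ are primitive and generate the Weyl-invariant subring in degrees $4$ and $12$ respectively (the group order $|W|=12$ being a unit in $\mathbb{Z}_{(5)}$), it is admissible to fix the polynomial generators by $j^\ast y_4=2(u^2+uv+v^2)$ and $j^\ast y_{12}=u^2v^2(u+v)^2$. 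This immediately gives $\rho^\ast c_2=-y_4$, $\rho^\ast c_4=\tfrac14y_4^2$, and $\rho^\ast c_6=-y_{12}$, as asserted.

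The only genuinely delicate point is identifying the weight system of the $7$-dimensional representation in a basis compatible with the intended generators of the invariant ring; once the weights $0,\pm u,\pm v,\pm(u+v)$ are in hand, the remainder is Borel's theorem together with an elementary symmetric function computation.
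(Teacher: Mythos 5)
Your proof is correct. Note that the paper gives no argument of its own for this proposition---it is quoted from Section 4 of Watanabe's paper, with an accompanying remark correcting the claim $\rho^\ast(c_4)=0$ found elsewhere in the literature---so your maximal-torus computation is a genuinely self-contained alternative (and very likely the argument underlying the cited source). All the ingredients check out: $\mathrm{G}_2$ has only $2$-torsion, so at $p=5$ Borel's theorems give $H^\ast(B\mathrm{G}_2;\mathbb{Z}_{(5)})=\mathbb{Z}_{(5)}[y_4,y_{12}]$ and identify the image of restriction to $BT^2$ with the Weyl invariants (using that $|W|=12$ is a unit in $\mathbb{Z}_{(5)}$); the $7$-dimensional representation has weights $0$ together with the six short roots, which in the basis $u=\beta$, $v=\alpha+\beta$ of the weight lattice are $0,\pm u,\pm v,\pm(u+v)$; and the identity $u^2v^2+u^2(u+v)^2+v^2(u+v)^2=(u^2+uv+v^2)^2$ is valid. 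The one place where your wording should be tightened: $e^2:=u^2v^2(u+v)^2$ does not by itself ``generate'' the degree-$12$ invariants, which form a free $\mathbb{Z}_{(5)}$-module of rank $2$; what you actually need is that $\mathbb{Z}_{(5)}[u,v]^W=\mathbb{Z}_{(5)}[q,e^2]$ with $q=u^2+uv+v^2$ and $e=uv(u+v)$ (clear from $W\cong S_3\times\{\pm1\}$ permuting $u$, $v$, $-(u+v)$ up to global sign, together with the invertibility of $6$), so that $j^\ast y_4=2q$ and $j^\ast y_{12}=e^2$ is an admissible choice of polynomial generators. Granting that, your normalization yields exactly $\rho^\ast c_2=-y_4$, $\rho^\ast c_4=\tfrac14y_4^2$, $\rho^\ast c_6=-y_{12}$, and the vanishing of the odd Chern classes for degree reasons; in particular your computation independently confirms the paper's remark that $\rho^\ast(c_4)\neq0$.
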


\begin{rem}
It is claimed in \cite[Proposition 2.10]{MR3276725} that $\rho^\ast(c_4)=0$, and this is false as above.
However, this is irrelevant to verifying the results of \cite{MR3276725}.
\end{rem}

It is well known that $\Sigma\mathbb{C}P^6$ has the homotopy type of the wedge sum $A\vee S^5\vee S^7\vee S^9$ where $A=S^3\cup e^{11}$.
The composite of the inclusions $A\to\Sigma\mathbb{C}P^6\to\SU(7)$ lifts to $\Spin(7)$.
Moreover, it lifts to $\mathrm{G}_2$ since $\Spin(7)/\mathrm{G}_2\cong S^7$.

\begin{lem}
\label{lem_H(A)_K(A)}
The cohomology of $A$ is computed as 
\[
	H^\ast(A;\mathbb{Z}_{(5)})=\mathbb{Z}_{(5)}\langle x_3,x_{11}\rangle, 
		\qquad x_3\in H^3,\quad x_{11}\in H^{11},
\]
where $x_3$ and $x_{11}$ are the images of the cohomology suspensions $-\sigma(y_4)$ and $-\sigma(y_{12})$ under the induced map of $A\to G$, respectively.
Moreover, the K-theory of $A$ is computed as 
\[
	\tilde{K}(\Sigma A;\mathbb{Z}_{(5)})=\mathbb{Z}_{(5)}\langle g,h\rangle,
		\qquad \ch g=\Sigma x_3+\frac{1}{5!}\Sigma x_{11},
		\quad \ch h=\Sigma x_{11}.
\]
\end{lem}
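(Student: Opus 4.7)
For the cohomology statement, since $A$ has only two cells (in degrees $3$ and $11$), the module $H^\ast(A;\mathbb{Z}_{(5)})$ is free of rank two, generated in those degrees. To identify the generators with $-i^\ast\sigma(y_4)$ and $-i^\ast\sigma(y_{12})$, I factor $i$ through the composite $j\colon A\to\Sigma\mathbb{C}P^6\to\SU(7)$, which agrees with $\rho\circ i$ by the construction of the lift. The standard Bott--Samelson-type map $\Sigma\mathbb{C}P^6\to\SU(7)$ carries $\sigma(c_k)\in H^{2k-1}(\SU(7))$ to a generator on the $(2k-1)$-cell of $\Sigma\mathbb{C}P^6$, so restriction along the wedge summand inclusion $A\hookrightarrow\Sigma\mathbb{C}P^6$ makes $j^\ast\sigma(c_2)$ and $j^\ast\sigma(c_6)$ into generators of $H^3(A)$ and $H^{11}(A)$. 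Combined with the naturality $\rho^\ast\sigma=\sigma\rho^\ast$ and the identities $\rho^\ast c_2=-y_4$, $\rho^\ast c_6=-y_{12}$ from Proposition~\ref{prp_Watanabe}, this forces $j^\ast\sigma(c_k)=-i^\ast\sigma(y_{2k})$ for $k\in\{2,6\}$, giving the stated sign.

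For the $K$-theory statement, $\Sigma A=S^4\cup e^{12}$ has cells only in even degrees, so the Atiyah--Hirzebruch spectral sequence for $\tilde{K}(-;\mathbb{Z}_{(5)})$ collapses and produces the short exact sequence
\[
0\to\tilde{K}(S^{12})\to\tilde{K}(\Sigma A;\mathbb{Z}_{(5)})\to\tilde{K}(S^4)\to 0.
\]
Define $h$ as the image of the Bott generator of $\tilde{K}(S^{12})$ under the collapse $\Sigma A\to S^{12}$; then $\ch h=\Sigma x_{11}$. To obtain $g$, I use the $5$-primary Adams-type splitting
\[
\Sigma^2\mathbb{C}P^5\simeq_{(5)}\Sigma A\vee S^6\vee S^8\vee S^{10}
\]
obtained by grouping cells according to their degree modulo $2(p-1)=8$, so that $\Sigma A$ collects precisely the cells in degrees $4$ and $12$. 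Write $\iota\colon\Sigma A\hookrightarrow\Sigma^2\mathbb{C}P^5$ for the summand inclusion, let $L$ denote the tautological line bundle on $\mathbb{C}P^5$, set $y=L-1$, and let $\beta\in\tilde{K}(S^2)$ be the Bott class. Put $g:=\iota^\ast(\beta\cdot y)$. Since
\[
\ch(\beta\cdot y)=\sum_{k=1}^{5}\frac{1}{k!}\,\Sigma^2x^k,
\]
and $\iota^\ast$ annihilates $\Sigma^2x^k$ for $k=2,3,4$ (these classes live on the spherical summands) while sending $\Sigma^2x$ and $\Sigma^2x^5$ to $\Sigma x_3$ and $\Sigma x_{11}$, we conclude $\ch g=\Sigma x_3+\tfrac{1}{5!}\Sigma x_{11}$.

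Integrality of the basis $(g,h)$ is then immediate: $h$ generates $\tilde{K}(S^{12})$ inside $\tilde{K}(\Sigma A;\mathbb{Z}_{(5)})$, while the image of $g$ in $\tilde{K}(S^4)$ is a generator because the restriction of $\beta\cdot y$ to the bottom cell $S^4=\Sigma^2\mathbb{C}P^1\subset\Sigma^2\mathbb{C}P^5$ equals $\beta\cdot\beta=\beta^2$, which generates $\tilde{K}(S^4)$. The main subtlety in a formal write-up, and the place where I expect the bookkeeping to be heaviest, is to justify the Adams-type splitting of $\Sigma^2\mathbb{C}P^5$ at $p=5$ and to match $\iota^\ast\Sigma^2x$, $\iota^\ast\Sigma^2x^5$ (with the correct signs) with the cellular generators $\Sigma x_3$, $\Sigma x_{11}$ coming from the map to $\mathrm{G}_2$; this amounts to a sign chase through the suspension isomorphism together with the cellular identification $\Sigma\mathbb{C}P^5\hookrightarrow\Sigma\mathbb{C}P^6\to\SU(7)$.
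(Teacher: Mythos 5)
Your argument is correct, and since the paper states this lemma without proof, your route — realizing $A$ as the $\{3,11\}$-cell wedge summand of the $5$-localized $\Sigma\mathbb{C}P^5$, pulling back $\sigma(c_2),\sigma(c_6)$ through $\rho$ via Proposition \ref{prp_Watanabe} to fix the signs, and computing $\ch$ of $\beta\cdot(L-1)$ via $e^x-1$ — is exactly the standard computation the authors are invoking. The only caveat is the sign bookkeeping you already flag, which is forced to come out as stated because the later computations ($\ch_2 g=\Sigma x_3$, $\ch_6 g=\tfrac{1}{5!}\Sigma x_{11}$) are what the normalization $x_3=-i^\ast\sigma(y_4)$, $x_{11}=-i^\ast\sigma(y_{12})$ is designed to produce.
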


Consider the wedge sum of the inclusions
\[
	\Sigma A\vee\Sigma A\vee\Sigma A\to B\mathrm{G}_2.
\]
Since $\mathrm{G}_2$ is homotopy commutative, this map extends over the fat wedge $T(\Sigma A,\Sigma A,\Sigma A)$.
Our goal is to see the higher Whitehead product
\[
	\omega\colon\Sigma^2(A\wedge A\wedge A)\to B\mathrm{G}_2
\]
is non-trivial.
Our basic idea is the same as the calculation of Samelson products in quasi-$p$-regular Lie groups in \cite{MR3775355}.
Once this is proved, Theorem \ref{mainthmD} follows from \cite[Theorem-Definition 3.1]{MR1337215}.

Let $j\colon B\mathrm{G}_2\to B$ be the composite
\[
	B\mathrm{G}_2\xrightarrow{B\rho}B\SU(7)\xrightarrow{\text{inclusion}}BU\xrightarrow{\pi}B
\]
and $W$ be the homotopy fiber of $j$.

\begin{lem}
\label{lem_jz}
The following equalities hold:
\begin{align*}
	j^\ast z_4=&-y_4,
	&j^\ast z_{12}=&-y_{12}-\frac{1}{4}y_4^3,
	&j^\ast z_{20}\equiv&-\frac{5}{4}y_{12}y_4^2,
	&j^\ast z_{28}\equiv&-\frac{3}{2}y_{12}^2y_4,
	&j^\ast z_{36}\equiv&-\frac{1}{2}y_{12}^3,
\end{align*}
where $\equiv$ means the congruence modulo $(y_4,y_{12})^4$, namely modulo the monomials $y_4^py_{12}^q$ for $p+q\ge4$.
\end{lem}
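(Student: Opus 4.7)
The plan is to unwind the composite $j$ and reduce the computation of $j^{\ast}z_{8n-4}$ to applying $\rho^{\ast}$ to the formulas of Lemma \ref{lem_Ec}. Since $\pi^{\ast}z_{8n-4}=E^{\ast}c_{4n-2}$ by the very definition of the generators $z_{8n-4}$, and the inclusion $B\SU(7)\to BU$ simply restricts universal Chern classes, one has
\[
	j^{\ast}z_{8n-4}=\rho^{\ast}E^{\ast}c_{4n-2}.
\]
It therefore suffices to combine Lemma \ref{lem_Ec}, applied for $n=1,2,3,4,5$, with the substitutions $\rho^{\ast}c_{2}=-y_{4}$, $\rho^{\ast}c_{4}=\tfrac{1}{4}y_{4}^{2}$, $\rho^{\ast}c_{6}=-y_{12}$ from Proposition \ref{prp_Watanabe}.

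The crucial step is to check that $\rho^{\ast}$ carries the ideal
\[
	I=(c_{k}\mid k\text{ odd or }k\ge 7)+((c_{2},c_{6})^{2}+(c_{4}))^{2}
\]
of Lemma \ref{lem_Ec} into the ideal $(y_{4},y_{12})^{4}\subset H^{\ast}(B\mathrm{G}_2;\mathbb{Z}_{(5)})$. For the first summand, Proposition \ref{prp_Watanabe} gives $\rho^{\ast}c_{k}=0$ whenever $k$ is odd or $k\ge 7$ (degrees where $H^{\ast}(B\mathrm{G}_2)$ vanishes or where the Newton relations in $H^{\ast}(B\SU(7))$ kill the result). For the second summand, a generator $c_{2}^{p}c_{4}^{q}c_{6}^{r}$ with $p+2q+r\ge 4$ maps to $\pm 4^{-q}y_{4}^{p+2q}y_{12}^{r}\in (y_{4},y_{12})^{p+2q+r}\subseteq (y_{4},y_{12})^{4}$, as claimed.

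With this observation in hand the five formulas follow by direct substitution. For instance,
\[
	j^{\ast}z_{12}=\rho^{\ast}E^{\ast}c_{6}\equiv\rho^{\ast}\bigl(c_{6}-c_{4}c_{2}+\tfrac{1}{2}c_{2}^{3}\bigr)=-y_{12}-\bigl(\tfrac{1}{4}y_{4}^{2}\bigr)(-y_{4})+\tfrac{1}{2}(-y_{4})^{3}=-y_{12}-\tfrac{1}{4}y_{4}^{3},
\]
and the computations for $z_{20}$, $z_{28}$, $z_{36}$ are analogous arithmetic on $\rho^{\ast}E^{\ast}c_{10}$, $\rho^{\ast}E^{\ast}c_{14}$, $\rho^{\ast}E^{\ast}c_{18}$. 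For $z_{4}$ and $z_{12}$ the congruence is in fact an equality, because any nonzero element of $(y_{4},y_{12})^{4}$ sits in cohomological degree $\ge 16$, leaving no room for correction in degrees $4$ and $12$.

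The only real care required is in the ideal-mapping step above; everything else is direct algebra using the formulas already supplied by Lemmas \ref{lem_Ec} and Proposition \ref{prp_Watanabe}, so no substantive obstacle is anticipated.
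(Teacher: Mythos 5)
Your proposal is correct and follows exactly the route the paper intends: the paper's proof is the one-line remark that the lemma ``immediately follows from Lemma \ref{lem_Ec} and Proposition \ref{prp_Watanabe}'', and you have simply made explicit the identification $j^{\ast}z_{8n-4}=\rho^{\ast}E^{\ast}c_{4n-2}$, the check that $\rho^{\ast}$ carries the ideal of Lemma \ref{lem_Ec} into $(y_4,y_{12})^4$, and the resulting substitutions. All the arithmetic checks out.
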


\begin{proof}
This lemma immediately follows from Lemma \ref{lem_Ec} and Proposition \ref{prp_Watanabe}.
\end{proof}

\begin{lem}
\label{lem_gen_of_W}
The cohomology of $W$ is computed as
\[
	H^\ast(W;\mathbb{Z}_{(5)})=\mathbb{Z}_{(5)}\langle a_{19},a_{27},a_{35}\rangle\qquad\text{for }\ast<43,
\]
where the transgressions $\tau(a_{19}),\tau(a_{27}),\tau(a_{35})$ with respect to the fibration $W\to B\mathrm{G}_2\to B$ satisfy
\begin{align*}
	\tau(a_{19})\equiv& z_{20}-\tfrac{5}{4}z_{12}z_4^2&&\text{mod }(z_4^5),\\
	\tau(a_{27})\equiv& z_{28}-\tfrac{3}{2}z_{12}^2z_4
		&&\text{mod }(z_{20})+(z_{4},z_{12})^4,\\
	\tau(a_{35})\equiv& z_{36}-\tfrac{1}{2}z_{12}^3&&\text{mod }(z_{20},z_{28})+(z_4,z_{12})^4,
\end{align*}
where the middle congruence is modulo the monomials containing $z_{20}$ or $z_4^pz_{12}^q$ for $p+q=4$ and the bottom congruence is the modulo monomials containing $z_{20}$, $z_{28}$ or $z_4^pz_{12}^q$ for $p+q=4$.
Moreover, the images of $a_{19},a_{27},a_{35}$ under the induced map of $\Omega B\to W$ are the cohomology suspensions $\sigma(z_{20}),\sigma(z_{28}),\sigma(z_{36})$.
\end{lem}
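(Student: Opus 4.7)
The plan is to analyze the Serre spectral sequence of the fibration $W \to B\mathrm{G}_2 \xrightarrow{j} B$. Since $B = \Omega B'$ for $B'$ a telescope of $B^2U$ under $BE$, the space $B$ is $2$-connected, so the spectral sequence has the form $E_2^{p,q} = H^p(B;\mathbb{Z}_{(5)}) \otimes H^q(W;\mathbb{Z}_{(5)}) \Rightarrow H^{p+q}(B\mathrm{G}_2;\mathbb{Z}_{(5)})$ with untwisted coefficients. By Lemma \ref{lem_jz}, $j^* z_4 = -y_4$ and $j^* z_{12} = -y_{12} - \tfrac{1}{4}y_4^3$ form a system of algebra generators for $H^*(B\mathrm{G}_2;\mathbb{Z}_{(5)}) = \mathbb{Z}_{(5)}[y_4,y_{12}]$, so the edge homomorphism $j^*$ is surjective; consequently each polynomial generator $z_{8n-4}$ with $n \geq 3$ must be killed by a transgression in the spectral sequence.

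Substituting $y_4 = -j^* z_4$ and $y_{12} = -j^* z_{12} - \tfrac{1}{4}(j^* z_4)^3$ into the formulas for $j^* z_{20}$, $j^* z_{28}$, $j^* z_{36}$ supplied by Lemma \ref{lem_jz} shows that
$$z_{20} - \tfrac{5}{4}z_{12}z_4^2,\qquad z_{28} - \tfrac{3}{2}z_{12}^2 z_4,\qquad z_{36} - \tfrac{1}{2}z_{12}^3$$
lie in $\ker j^*$ modulo the ideals $(z_4^5)$, $(z_{20}) + (z_4,z_{12})^4$, and $(z_{20},z_{28}) + (z_4,z_{12})^4$ respectively. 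Hence the spectral sequence forces the existence of transgressive classes $a_{19} \in H^{19}(W)$, $a_{27} \in H^{27}(W)$, $a_{35} \in H^{35}(W)$ with the prescribed transgressions, and a degree-by-degree comparison of the total ranks of $H^*(B) \otimes H^*(W)$ and $H^*(B\mathrm{G}_2)$ through degree $42$ confirms that no further generators of $H^*(W)$ arise in the range $\ast < 43$.

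For the identification with cohomology suspensions we compare the fibration $W \to B\mathrm{G}_2 \to B$ with the path-loop fibration $\Omega B \to PB \to B$ via the natural map of fibrations realising $W$ as the homotopy pullback of $PB \to B$ along $j$. This comparison matches the fiber inclusion $\Omega B \to W$ of the derived fiber sequence $\Omega B \to W \to B\mathrm{G}_2$ with the fiber inclusion of the path-loop fibration, and it intertwines transgressions. Since the path-loop transgression coincides with the cohomology suspension $\sigma \colon H^n(B) \to H^{n-1}(\Omega B)$, each $a_{8n-5}$ pulls back to $\sigma(z_{8n-4})$ in $H^*(\Omega B)$, as claimed.

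The main obstacle will be the bookkeeping in the substitution step: expanding $y_{12}^i y_4^j$ as polynomials in $j^* z_4$ and $j^* z_{12}$, extracting the leading term, and confirming that all correction contributions—those arising from the $(y_4,y_{12})^4$-errors already present in Lemma \ref{lem_jz}, together with those picked up during back-substitution—land in the indicated ideals. The case of $a_{35}$ is the most delicate, since one must simultaneously quotient out the decomposables containing the earlier transgressions $z_{20},z_{28}$ and the high-order corrections in $(z_4,z_{12})^4$.
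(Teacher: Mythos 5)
Your proposal is correct and follows the same route as the paper, whose proof is simply the assertion that the lemma ``follows from the computation of the cohomology Serre spectral sequence and Lemma \ref{lem_jz}''; you have filled in exactly the intended details (surjectivity of the edge map $j^\ast$ forcing the transgressions, the back-substitution producing the stated congruences, and the comparison with the path--loop fibration for the suspension statement). No gaps.
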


\begin{proof}
This follows from the computation of the cohomology Serre spectral sequence and Lemma \ref{lem_jz}.
\end{proof}

The map $j$ induces the exact sequence
\[
	[\Sigma^2A^{\wedge 3},\Omega B]\to[\Sigma^2A^{\wedge 3},W]\to[\Sigma^2A^{\wedge 3},B\mathrm{G}_2]\xrightarrow{j_\ast}[\Sigma^2A^{\wedge 3},B].
\]
Let us construct an appropriate lift of $\omega\in[\Sigma^2A^{\wedge 3},B\mathrm{G}_2]$ to $[\Sigma^2A^{\wedge 3},W]$.

\begin{lem}
\label{lem_ext_fatwedge}
The extension of the wedge sum of the inclusions $\Sigma A\vee\Sigma A\vee\Sigma A\to B$ over the fat wedge $T(\Sigma A,\Sigma A,\Sigma A)$ is unique up to homotopy.
\end{lem}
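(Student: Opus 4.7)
The plan is to reduce the uniqueness statement to the vanishing of a single $[-,B]$-group. The cofiber sequence
\[
\Sigma A \vee \Sigma A \vee \Sigma A \hookrightarrow T(\Sigma A,\Sigma A,\Sigma A) \to \bigvee_{1 \le i < j \le 3} \Sigma A \wedge \Sigma A
\]
has cofiber equal to the wedge of three copies of $\Sigma^2(A \wedge A)$. Because $B = \Omega B'$ is a loop space, $[-,B]$ takes values in groups, and the associated Puppe sequence shows that any two extensions of a fixed map $\Sigma A^{\vee 3} \to B$ differ by an element in the image of $[\Sigma^2(A \wedge A)^{\vee 3},B] \to [T(\Sigma A,\Sigma A,\Sigma A),B]$. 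It therefore suffices to establish the stronger vanishing $[\Sigma^2(A \wedge A),B]_{(5)} = 0$.

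For this I would use the cell structure coming from $A = S^3 \cup e^{11}$: the smash $A \wedge A$ has cells in dimensions $6, 14, 14, 22$, so $\Sigma^2(A \wedge A)$ has cells only in dimensions $8, 16, 16, 24$. Since $B$ is simply connected (it is the loop space of the $2$-connected space $B'$), a standard obstruction-theoretic induction on skeleta reduces the desired vanishing to showing that $\pi_n(B)_{(5)} = 0$ for $n \in \{8, 16, 24\}$.

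To finish I would compute these homotopy groups from the telescope presentation
\[
B \simeq \hocolim(BU \xrightarrow{E} BU \xrightarrow{E} \cdots),
\]
which yields $\pi_n(B) = \colim_{E_\ast} \pi_n(BU)$. The left defining diagram of $E$ gives $E^\ast \ch_n = 0$ in $H^{2n}(BU;\mathbb{Q})$ whenever $n \not\equiv 2 \pmod 4$, and since $\pi_{2n}(BU) \cong \mathbb{Z}$ is torsion-free and detected rationally by $\ch_n$, one concludes that $E_\ast$ vanishes on $\pi_{2n}(BU)$ for all such $n$. Passing to the colimit gives $\pi_{2n}(B)_{(5)} = 0$ for $n \not\equiv 2 \pmod 4$; the dimensions $8, 16, 24$ correspond to $n = 4, 8, 12$, each not congruent to $2 \pmod 4$, so the required vanishings hold. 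The step demanding the most care is this final homotopy computation, specifically the passage from rational vanishing of $E_\ast$ to its integral vanishing via torsion-freeness of $\pi_{2n}(BU)$; once this is in place, the remainder of the argument is entirely formal.
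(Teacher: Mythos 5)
Your argument is correct and is essentially the paper's own proof: the paper reduces uniqueness to the vanishing of $\pi_8(B)$, $\pi_{16}(B)$ and $\pi_{24}(B)$, which is exactly what you establish via the cofiber $(\Sigma^2(A\wedge A))^{\vee 3}$ of $(\Sigma A)^{\vee 3}\hookrightarrow T(\Sigma A,\Sigma A,\Sigma A)$ and the cell structure of $\Sigma^2(A\wedge A)$. Your write-up simply makes explicit the steps (Puppe sequence, obstruction theory, and the computation of $\pi_*(B)$ from the telescope using $E^*\ch_n=0$ for $n\not\equiv 2 \bmod 4$ and torsion-freeness of $\pi_{2n}(BU)$) that the paper leaves implicit.
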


\begin{proof}
This follows from the fact that the homotopy groups $\pi_8(B)$, $\pi_{16}(B)$ and $\pi_{24}(B)$ are trivial.
\end{proof}

Define a map $\tilde{\mu}\colon(\Sigma A)^{\times3}\to B$ by the composite
\[
	(\Sigma A)^{\times3}
		\to(B\mathrm{G}_2)^{\times 3}
		\xrightarrow{j^{\times 3}}B^{\times 3}
		\xrightarrow{\text{multiplication}} B.
\]
Then we obtain the homotopy commutative diagram
\[
\xymatrix{
	\Sigma^2A^{\wedge 3} \ar[r] \ar[d]^-{\mu}
		& T(\Sigma A,\Sigma A,\Sigma A) \ar[r] \ar[d]
		& (\Sigma A)^{\times3} \ar[d]^-{\tilde{\mu}} \\
	W \ar[r]
		& B\mathrm{G}_2 \ar[r]^-{j}
		& B,
}
\]
as follows.
The map $T(\Sigma A,\Sigma A,\Sigma A)\to B\mathrm{G}_2$ is an extension of the wedge sum of the inclusions $(\Sigma A)^{\vee3}\to B\mathrm{G}_2$.
Such extension exists since $\mathrm{G}_2$ is homotopy commutative.
By Lemma \ref{lem_ext_fatwedge}, the right square commutes up to homotopy.
The map $\mu\colon\Sigma^2A^{\wedge 3}\to W$ is defined up to homotopy and the left square commutes since the top row is a cofiber sequence and the bottom row is a fiber sequence.
For a precise description of the top cofiber sequence, see \cite{MR0174054}.
Here $\mu$ is the lifting of the map $\omega\colon\Sigma^2A^{\wedge 3}\to B\mathrm{G}_2$.

To check the non-triviality of $\mu$, we first embed $[\Sigma^2A^{\wedge 3},W]$ to an easier module.

\begin{lem}
\label{lem_injectivity_mu}
The map
\[
	[\Sigma^2A^{\wedge 3},W]\to 
		H^{19}(\Sigma^2A^{\wedge 3};\mathbb{Z}_{(5)})
			\oplus H^{27}(\Sigma^2A^{\wedge 3};\mathbb{Z}_{(5)})
			\oplus H^{35}(\Sigma^2A^{\wedge 3};\mathbb{Z}_{(5)})(\cong\mathbb{Z}_{(5)}^{\oplus7})
\]
defined by $f\mapsto(f^\ast(a_{19}),f^\ast(a_{27}),f^\ast(a_{35}))$ is injective.
\end{lem}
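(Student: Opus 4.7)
The plan is an obstruction-theoretic argument exploiting the precise match between the cells of $\Sigma^2 A^{\wedge 3}$ and the degrees in which $H^*(W;\Z_{(5)})$ is nonzero.

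I would first unpack the cell structure: since $A=S^3\cup e^{11}$ $5$-locally, the triple smash $A^{\wedge 3}$ has cells in dimensions $9$, $17$ (three), $25$ (three), and $33$, so $\Sigma^2 A^{\wedge 3}$ has cells in dimensions $11$, $19$ (three), $27$ (three), and $35$, giving the total rank $1+3+3+1=7$ of the target. By Lemma \ref{lem_gen_of_W}, $W$ is $18$-connected, so the restriction of any $f\colon\Sigma^2 A^{\wedge 3}\to W$ to the $11$-cell is null-homotopic; after a homotopy, $f$ factors through $Y:=\Sigma^2 A^{\wedge 3}/S^{11}$, whose $19$-skeleton is the wedge $(S^{19})^{\vee 3}$.

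Next I would argue cell by cell. At degree $19$, Hurewicz identifies $\pi_{19}(W)_{(5)}\cong H_{19}(W;\Z_{(5)})\cong\Z_{(5)}$, with generator detected by $a_{19}$. Since no cells of $Y$ lie strictly between dimensions $19$ and $27$, the restriction $H^{19}(Y;\Z_{(5)})\hookrightarrow H^{19}(Y^{(19)};\Z_{(5)})\cong\Z_{(5)}^{\oplus 3}$ is injective, so $f^*a_{19}=0$ forces $f|_{Y^{(19)}}$ to be null, and $f$ further factors through $Y_1:=Y/Y^{(19)}$, whose $27$-skeleton is $(S^{27})^{\vee 3}$. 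I would iterate this at dimension $27$ and then at the single $35$-cell, each step invoking a detection lemma: any map $S^n\to W$ for $n\in\{27,35\}$ is null-homotopic if and only if its pullback of $a_n$ vanishes.

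To establish this detection lemma at $n\in\{27,35\}$, where direct Hurewicz fails because $\pi_{19}(W)_{(5)}\ne 0$, I would pass to the Postnikov stage $W\to K(\Z_{(5)},19)$ with $19$-connected fiber $V$. For $g\colon S^n\to W$, the composite to $K(\Z_{(5)},19)$ vanishes since $H^{19}(S^n;\Z_{(5)})=0$, and the lift to $V$ is unique up to homotopy since $H^{18}(S^n;\Z_{(5)})=0$. On the $19$-connected space $V$, Hurewicz reduces the question to showing that $H_n(V;\Z_{(5)})$ is generated by a class hitting $a_n$ under the inclusion into $W$; I would verify this by running the cohomology Serre spectral sequence of $V\to W\to K(\Z_{(5)},19)$ in the range $\ast<43$ and reading off the surviving free $\Z_{(5)}$-generators from Lemma \ref{lem_gen_of_W}.

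The main obstacle is this Serre spectral sequence analysis. The $5$-local cohomology of $K(\Z_{(5)},19)$ contains $\Z/5$-torsion classes in degrees $27$, $28$, $35$, and $36$ arising from $P^1\iota_{19}$ and $P^2\iota_{19}$ together with their Bocksteins, and one must verify that these torsion contributions transgress away or otherwise cannot contaminate the torsion-free generators $a_{27}$ and $a_{35}$ of $H^*(W;\Z_{(5)})$---plausible precisely because $H^*(W;\Z_{(5)})$ is torsion-free in this range. Once the detection lemma is in place, the iterated skeleton-collapse argument yields the desired injectivity.
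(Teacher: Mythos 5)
Your compression strategy---push $f$ successively through the quotients of $\Sigma^2A^{\wedge 3}$ by its skeleta and detect each restriction by one of $a_{19},a_{27},a_{35}$---is structurally sound, and the step at dimension $19$ is complete (an ordinary Hurewicz argument on the $18$-connected space $W$). The gap is in your proof of the detection lemma at $n=27$ and $n=35$. The fibre $V$ of $W\to K(\mathbb{Z}_{(5)},19)$ is only $19$-connected, so the Hurewicz theorem identifies $\pi_i(V)$ with $H_i(V;\mathbb{Z}_{(5)})$ only for $i\le 20$; it gives no control of $\pi_{27}(V)$ or $\pi_{35}(V)$, and your proposed Serre spectral sequence computes homology of $V$, not homotopy. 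The danger is precisely $5$-torsion in the homotopy groups of $W$ in the intervening degrees: through dimension $\le 42$ the space $W$ has a CW approximation $S^{19}\cup e^{27}\cup e^{35}$, so for instance $\pi_{26}(W)$ sees $(\pi^s_7)_{(5)}\cong\mathbb{Z}/5$ (generated by $\alpha_1$) over the bottom cell and $\pi_{34}(W)$ sees $(\pi^s_{15})_{(5)}\cong\mathbb{Z}/5$ (generated by $\alpha_2$). If any $5$-torsion survived into $\pi_{27}(W)$ or $\pi_{35}(W)$, a torsion class $g$ would satisfy $g^\ast a_n=0$ without being null and the detection lemma would be false; torsion-freeness of $H_\ast(V;\mathbb{Z}_{(5)})$ does not rule this out. (Also a small slip: the target has rank $3+3+1=7$, not $1+3+3+1$; the $11$-cell contributes nothing there.)

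What is actually needed, and what the paper supplies, is the direct computation $\pi_{19}(W)\cong\pi_{27}(W)\cong\pi_{35}(W)\cong\mathbb{Z}_{(5)}$ and $\pi_{28}(W)=\pi_{36}(W)=0$, obtained from the CW approximation above together with the fact that the $5$-primary parts of the stable stems $\pi^s_j$ vanish for $j=1,8,9,16,17$ (the first $5$-torsion lives in stems $7,15,23,\ldots$, none of which occur among the differences of the relevant degrees and the cell dimensions $19,27,35$). Once $\pi_{27}(W)$ and $\pi_{35}(W)$ are known to be free of rank one, your detection lemma follows from the rational Hurewicz theorem, since by Lemma \ref{lem_gen_of_W} the space $W$ is rationally $K(\mathbb{Q},19)\times K(\mathbb{Q},27)\times K(\mathbb{Q},35)$ in this range. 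The paper packages the whole argument more economically: $[\Sigma^2A^{\wedge 3},W]$ is a stable homotopy group because $W$ is $18$-connected and $\dim\Sigma^2A^{\wedge 3}=35$; the skeletal induction with the homotopy groups just listed shows it is a free $\mathbb{Z}_{(5)}$-module; and the map of the lemma is a rational isomorphism, hence injective on a torsion-free source. Your argument becomes correct once you replace the Postnikov/Hurewicz step by this homotopy-group computation.
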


\begin{proof}
First we note that the homotopy set $[\Sigma^2A^{\wedge 3},W]$ is isomorphic to the stable homotopy set $\{\Sigma^2A^{\wedge 3},W\}$ since $W$ is $18$-connected and  $\dim\Sigma^2A^{\wedge 3}=35$.
The rationalized map
\[
	[\Sigma^2A^{\wedge 3},W]\otimes\mathbb{Q}\to H^{19}(\Sigma^2A^{\wedge 3};\mathbb{Q})\oplus H^{27}(\Sigma^2A^{\wedge 3};\mathbb{Q})\oplus H^{35}(\Sigma^2A^{\wedge 3};\mathbb{Q})
\]
is an isomorphism by Lemma \ref{lem_gen_of_W}.
Then it is sufficient to show that $[\Sigma^2A^{\wedge 3},W]$ is a free $\mathbb{Z}_{(5)}$-module.
The homotopy groups of $W$ are computed as
\[
	\pi_i(W)=
		\begin{cases}
			\mathbb{Z}_{(5)}	& i=19,27,35 \\
			0					& i=28,36
		\end{cases}
\]
by the approximation by a CW complex $S^{19}\cup e^{27}\cup e^{35}\to W$ and the stable homotopy groups of spheres.
Thus by a skeletal consideration, one can see that $[\Sigma^2A^{\wedge 3},W]$ is $\mathbb{Z}_{(5)}$-free.
\end{proof}

Next we compute the image of $\mu\in[\Sigma^2A^{\wedge 3},W]$ in $H^{19}(\Sigma^2A^{\wedge 3};\mathbb{Z}_{(5)})\oplus H^{27}(\Sigma^2A^{\wedge 3};\mathbb{Z}_{(5)})\oplus H^{35}(\Sigma^2A^{\wedge 3};\mathbb{Z}_{(5)})$.
\begin{prp}
\label{prp_mua}
The following equalities hold:
\[
	\mu^\ast(a_{19})=-\frac{3}{2}b_{19},
		\qquad \mu^\ast(a_{27})=-2b_{27},
		\qquad \mu^\ast(a_{35})=-2b_{35},
\]
where $b_{19},b_{27},b_{35}$ are defined as
\begin{align*}
	b_{19}=&\Sigma^2 x_{11}\otimes x_3\otimes x_3+\Sigma^2 x_3\otimes x_{11}\otimes x_3+\Sigma^2 x_3\otimes x_3\otimes x_{11},\\
	b_{27}=&\Sigma^2 x_{11}\otimes x_{11}\otimes x_3+\Sigma^2 x_{11}\otimes x_3\otimes x_{11}+\Sigma^2 x_3\otimes x_{11}\otimes x_{11},\\
	b_{35}=&\Sigma^2 x_{11}\otimes x_{11}\otimes x_{11}.
\end{align*}
\end{prp}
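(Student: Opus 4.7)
The plan is to translate the computation of each $\mu^*(a_i) \in H^i(\Sigma^2 A^{\wedge 3})$ into a coproduct calculation in $H^*(B; \mathbb{Z}_{(5)})$ by exploiting the defining commutative diagram for $\mu$. The first step is to identify $\delta(\mu^*(a_i)) = \tilde\mu^*(\tau(a_i))$, where $\delta \colon H^i(\Sigma^2 A^{\wedge 3}) \hookrightarrow H^{i+1}((\Sigma A)^{\times 3})$ is the inclusion coming from the top cofiber sequence as the subspace of classes vanishing on $T(\Sigma A, \Sigma A, \Sigma A)$; this equality is a standard cochain-level transgression argument, since a cochain $\alpha$ on $B\mathrm{G}_2$ with $\alpha|_W = a_i$ and $d\alpha = j^* \tau(a_i)$ makes $\tilde\mu^*\tau(a_i)$ a cocycle on $(\Sigma A)^{\times 3}$ whose restriction to $T$ is exact. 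It then suffices to compute the component of $\tilde\mu^*(\tau(a_i))$ landing in the triple-smash part.

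Writing $u := \Sigma x_3$ and $v := \Sigma x_{11}$, Lemma \ref{lem_jz} together with the vanishing of cup products on $\Sigma A$ gives $(j|_{\Sigma A})^* z_4 = u$, $(j|_{\Sigma A})^* z_{12} = v$, and $(j|_{\Sigma A})^* z_n = 0$ for $n \geq 20$. Since $\pi \colon BU \to B$ is a loop map, $\pi^{*\otimes 3}\Delta^{(3)}_B = E^{*\otimes 3}\Delta^{(3)}_{BU}$, so the coproduct analysis in $B$ reduces to one on $BU$ via the classical formula $\Delta c_n = \sum c_i \otimes c_{n-i}$ together with Lemma \ref{lem_Ec}. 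The crucial observation is that in the chain $\Sigma A \to B\mathrm{G}_2 \to BU \to B$, the pullback kills $E^* c_i$ for all $i \notin \{2, 6\}$: Lemma \ref{lem_Ec} expresses each $E^* c_i$ as a polynomial in $c_2, c_4, c_6$ modulo an ideal, and the substitutions $c_2 \mapsto u$, $c_4 \mapsto \tfrac14 u^2 = 0$, $c_6 \mapsto v$ (from Proposition \ref{prp_Watanabe}) combined with $u^2 = v^2 = uv = 0$ annihilate everything except $c_2$ itself (giving $u$) and $c_6$ itself (giving $v$). Consequently the triple-smash part of $\tilde\mu^*(z_{8n-4})$ comes only from tensor terms $c_{i_1} \otimes c_{i_2} \otimes c_{i_3}$ of $\Delta^{(3)} c_{4n-2}$ with $(i_1, i_2, i_3) \in \{2, 6\}^3$; enumeration yields the permutations of $(2,2,6)$, $(2,6,6)$, $(6,6,6)$ with coefficient $1$, giving respectively $B_{19}, B_{27}, B_{35}$, where $B_{i+1} := \delta(b_i)$.

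For the corrections in Lemma \ref{lem_gen_of_W}, the same method gives $\tilde\mu^*(z_4) = u_1 + u_2 + u_3$ (primitive) and $\tilde\mu^*(z_{12}) = v_1 + v_2 + v_3 + u_1 u_2 u_3$ on positive-degree components (the cubic term from the unique triple $(2,2,2)$ in $\Delta^{(3)} c_6$). Expanding products and applying $u^2 = v^2 = uv = 0$ in each factor yields $\tilde\mu^*(z_{12} z_4^2)|_{\mathrm{smash}} = 2B_{19}$, $\tilde\mu^*(z_{12}^2 z_4)|_{\mathrm{smash}} = 2B_{27}$, and $\tilde\mu^*(z_{12}^3)|_{\mathrm{smash}} = 6B_{35}$; every other monomial in the error ideals of Lemma \ref{lem_gen_of_W} pulls back to zero on the triple-smash component because it either contains a factor $z_4^a z_{12}^b$ with $a + b \geq 4$ (forcing a repeated $u$ or $v$ on some slot) or a factor $z_{20}$ or $z_{28}$ multiplied by a further positive-degree class (with the same effect). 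Combining with Lemma \ref{lem_gen_of_W} then yields $\tilde\mu^*(\tau(a_{19}))|_{\mathrm{smash}} = B_{19} - \tfrac{5}{4}(2B_{19}) = -\tfrac{3}{2}B_{19}$, and analogously $-2B_{27}$ and $-2B_{35}$; the stated formulas follow by injectivity of $\delta$. The principal technical obstacle is bookkeeping Lemma \ref{lem_Ec}'s mod-ideal formulas, but this is resolved by the observation above that only $c_2$ and $c_6$ survive pullback to $\Sigma A$.
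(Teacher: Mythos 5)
Your proposal is correct and follows essentially the same route as the paper: both identify $\mu^\ast(a_{i})$ with $\tilde{\mu}^\ast(\tau(a_{i}))$ via the map of cofiber sequences (the paper's diagram involving $C_1\simeq(\Sigma A)^{\times3}$ and $C_2$, with injectivity supplied by Lemma \ref{lem_gen_of_W}), and both then evaluate $\tilde{\mu}^\ast$ on $z_{20}-\tfrac{5}{4}z_{12}z_4^2$, $z_{28}-\tfrac{3}{2}z_{12}^2z_4$, $z_{36}-\tfrac{1}{2}z_{12}^3$ using the Cartan formula for Chern classes on $(BU)^{\times3}$. Your explicit observation that only $c_2\mapsto\Sigma x_3$ and $c_6\mapsto\Sigma x_{11}$ survive pullback to $\Sigma A$ is a clean way to organize the bookkeeping that the paper carries out implicitly, and your coefficients $-\tfrac32$, $-2$, $-2$ agree with the paper's.
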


\begin{proof}
The previous diagram induces the map of cofiber sequences
\[
\xymatrix{
	\Sigma^2A^{\wedge 3} \ar[r] \ar[d]^-{\mu}
		& T(\Sigma A,\Sigma A,\Sigma A) \ar[r] \ar[d]
		& C_1 \ar[r]^-{\partial} \ar[d]
		& \Sigma^3A^{\wedge 3} \ar[d]^-{\Sigma{\mu}} \\
	W \ar[r]
		& B\mathrm{G}_2 \ar[r]
		& C_2 \ar[r]
		& \Sigma W
}
\]
and hence the next homotopy commutative square
\[
\xymatrix{
	C_1 \ar[r]^-{\simeq} \ar[d]
		& (\Sigma A)^{\times 3} \ar[d]^-{\tilde{\mu}} \\
	C_2 \ar[r]
		& B,
}
\]
where the composite $C_1\to(\Sigma A)^{\times 3}\to(\Sigma A)^{\wedge 3}\cong\Sigma^3A^{\wedge 3}$ is homotopic to $\partial$.

For $i=20,28,36$, we have the following commutative diagram:
\[
\xymatrix{
	\tilde{H}^{i-1}(\Sigma^2A^{\wedge 3};\mathbb{Z}_{(5)}) \ar[r]^-{\cong}
		& \tilde{H}^i(C_1;\mathbb{Z}_{(5)})
		& \tilde{H}^i((\Sigma A)^{\times3};\mathbb{Z}_{(5)}) \ar[l]_-{\cong} \\
	H^{i-1}(W;\mathbb{Z}_{(5)}) \ar[u]_-{\mu^\ast} \ar@{^(->}[r]
		& \tilde{H}^i(C_2;\mathbb{Z}_{(5)}) \ar[u]
		& \tilde{H}^i(B;\mathbb{Z}_{(5)}) \ar[u]_-{\tilde{\mu}^\ast} \ar@{->>}[l]
}
\]
The injectivity and the surjectivity in the bottom row follows from the following diagram and the computation of the transgressions in Lemma \ref{lem_gen_of_W}, where the top row is exact.
\[
\xymatrix{
	0 \ar[r]
		& H^{i-1}(W;\mathbb{Z}_{(5)}) \ar@{^(->}[r]
		& \tilde{H}^i(C_2;\mathbb{Z}_{(5)}) \ar@{->>}[r]
		& \tilde{H}^i(B\mathrm{G}_2;\mathbb{Z}_{(5)}) \ar[r]
		& 0 \\
	 & & \tilde{H}^i(B;\mathbb{Z}_{(5)}) \ar[u] \ar@{->>}[r]
	 	& \tilde{H}^i(B\mathrm{G}_2;\mathbb{Z}_{(5)}) \ar@{=}[u]
}
\]
Under the induced map of the multiplication $(BU)^{\times3}\to BU$, the class $E^\ast c_n\in H^{2n}(BU;\mathbb{Z}_{(5)})$ is mapped to
\[
	\sum_{p+q+r=n}(E^\ast c_p)\times(E^\ast c_q)\times(E^\ast c_r)
\]
by the Cartan formula for Chern classes.
From this, we can compute
\begin{align*}
	\tilde{\mu}^\ast(z_{20}-\frac{5}{4}z_{12}z_4^2)=
		&-\frac{3}{2}(\Sigma x_3\otimes\Sigma x_3\otimes\Sigma x_{11}
			+\Sigma x_3\otimes\Sigma x_{11}\otimes\Sigma x_3
			+\Sigma x_{11}\otimes\Sigma x_3\otimes\Sigma x_3),\\
	\tilde{\mu}^\ast(z_{28}-\frac{3}{2}z_{12}^2z_4)=
		&-2(\Sigma x_3\otimes\Sigma x_{11}\otimes\Sigma x_{11}
			+\Sigma x_{11}\otimes\Sigma x_3\otimes\Sigma x_{11}
			+\Sigma x_{11}\otimes\Sigma x_{11}\otimes\Sigma x_3),\\
	\tilde{\mu}^\ast(z_{36}-\frac{1}{2}z_{12}^3)=
		&-2\Sigma x_{11}\otimes\Sigma x_{11}\otimes\Sigma x_{11}.
\end{align*}
Then by the above diagram and Lemma \ref{lem_gen_of_W}, we obtain $\mu^\ast(a_{19}),\mu^\ast(a_{27}),\mu^\ast(a_{35})$ as above.
\end{proof}

Finally we compute the image of the composite
\[
	\Phi\colon[\Sigma^2A^{\wedge 3},\Omega B]\to[\Sigma^2A^{\wedge 3},W]
		\to H^{19}(\Sigma^2A^{\wedge 3};\mathbb{Z}_{(5)})
			\oplus H^{27}(\Sigma^2A^{\wedge 3};\mathbb{Z}_{(5)})
			\oplus H^{35}(\Sigma^2A^{\wedge 3};\mathbb{Z}_{(5)}).
\]

Consider the commutative diagram
\[
\xymatrix{
	[\Sigma^3A^{\wedge 3},B] \ar[r]^-{\cong} \ar[d]_-{z_i}
		& [\Sigma^2A^{\wedge 3},\Omega B] \ar[r] \ar[d]_-{\sigma(z_i)}
		& [\Sigma^2A^{\wedge 3},W] \ar[d]^-{a_{i-1}} \\
	H^{i}(\Sigma^3A^{\wedge 3};\mathbb{Z}_{(5)}) \ar[r]^-{\cong}
		& H^{i-1}(\Sigma^2A^{\wedge 3};\mathbb{Z}_{(5)}) \ar@{=}[r]
		& H^{i-1}(\Sigma^2A^{\wedge 3};\mathbb{Z}_{(5)})
}
\]
for $i=20,28,36$.
Since $\pi_\ast\colon\tilde{K}(\Sigma^3A^{\wedge 3};\mathbb{Z}_{(5)})\cong[\Sigma^3A^{\wedge 3},BU]\to[\Sigma^3A^{\wedge 3},B]$ is an isomorphism, the image of the left vertical arrow coincides with the image of the following map by Lemma \ref{lem_Ec_indecomp}:
\[
	(9!\ch_{10},13!\ch_{14},17!\ch_{18})\colon\tilde{K}(\Sigma^3A^{\wedge 3};\mathbb{Z}_{(5)})
		\to H^{20}(\Sigma^3A^{\wedge 3};\mathbb{Z}_{(5)})
			\oplus H^{28}(\Sigma^3A^{\wedge 3};\mathbb{Z}_{(5)})
			\oplus H^{36}(\Sigma^3A^{\wedge 3};\mathbb{Z}_{(5)}).
\]
Under the K\"{u}nneth isomorphism
\[
	\tilde{K}(\Sigma^3A^{\wedge 3};\mathbb{Z}_{(5)})\cong\tilde{K}(\Sigma A;\mathbb{Z}_{(5)})^{\otimes 3},
\]
we can compute as
\begin{align*}
	9!\ch_{10}(g\otimes g\otimes g)&=9!((\ch_2g)(\ch_6g)(\ch_6g)+(\ch_2g)(\ch_6g)(\ch_2g)+(\ch_2g)(\ch_2g)(\ch_6g))\\
		&=\frac{9!}{5!}(\Sigma x_{11}\otimes\Sigma x_3\otimes\Sigma x_3
			+\Sigma x_3\otimes\Sigma x_{11}\otimes\Sigma x_3
			+\Sigma x_3\otimes\Sigma x_3\otimes\Sigma x_{11}),\\
	13!\ch_{14}(g\otimes g\otimes g)&=13!((\ch_6g)(\ch_6g)(\ch_2g)+(\ch_6g)(\ch_2g)(\ch_6g)+(\ch_2g)(\ch_6g)(\ch_6g))\\
		&=\frac{13!}{5!5!}(\Sigma x_{11}\otimes\Sigma x_{11}\otimes\Sigma x_3
			+\Sigma x_{11}\otimes\Sigma x_3\otimes\Sigma x_{11}
			+\Sigma x_3\otimes\Sigma x_{11}\otimes\Sigma x_{11})\\
	17!\ch_{18}(g\otimes g\otimes g)&=17!(\ch_6g)(\ch_6g)(\ch_6g)\\
		&=\frac{17!}{5!5!5!}\Sigma x_{11}\otimes\Sigma x_{11}\otimes\Sigma x_{11}
\end{align*}
by Lemma \ref{lem_H(A)_K(A)}.
Similarly, we have
\begin{align*}
	&9!\ch_{10}(h\otimes g\otimes g)	=9!\Sigma x_{11}\otimes\Sigma x_3\otimes\Sigma x_3,\\
	&13!\ch_{14}(h\otimes g\otimes g)	=\frac{13!}{5!}(\Sigma x_{11}\otimes\Sigma x_{11}\otimes\Sigma x_3+\Sigma x_{11}\otimes\Sigma x_3\otimes\Sigma x_{11}),\\
	&17!\ch_{18}(h\otimes g\otimes g)	=\frac{17!}{5!5!}\Sigma x_{11}\otimes\Sigma x_{11}\otimes\Sigma x_{11},\\
	&9!\ch_{10}(h\otimes h\otimes g)	=0,\\
	&13!\ch_{14}(h\otimes h\otimes g)	=13!\Sigma x_{11}\otimes\Sigma x_{11}\otimes\Sigma x_3,\\
	&13!\ch_{18}(h\otimes h\otimes g)	=\frac{17!}{5!}\Sigma x_{11}\otimes\Sigma x_{11}\otimes\Sigma x_{11},\\
	&9!\ch_{10}(h\otimes h\otimes h)	=0,\\
	&13!\ch_{14}(h\otimes h\otimes h)	=0,\\
	&13!\ch_{18}(h\otimes h\otimes h)	=17!\Sigma x_{11}\otimes\Sigma x_{11}\otimes\Sigma x_{11}.
\end{align*}
The other terms are analogous.

\begin{proof}[Proof of Theorem \ref{mainthmD}]
Now suppose that $(\mu^\ast(a_{19}),\mu^\ast(a_{27}),\mu^\ast(a_{35}))$ is contained in the image of the map $\Phi$.
Then by Proposition \ref{prp_mua} and the above computation, there exist $a,b,c,d\in\mathbb{Z}_{(5)}$ satisfying the following equations:
\[
\left\{
\begin{array}{ll}
\tfrac{9!}{5!}a+9!b															& =\tfrac{3}{2} \\
\tfrac{13!}{5!5!}a+2\cdot\tfrac{13!}{5!}b+13!c								& =2\\
\tfrac{17!}{5!5!5!}a+3\cdot\tfrac{17!}{5!5!}b+3\cdot\tfrac{17!}{5!}c+17!d	& =2
\end{array}
\right.
\]
But one can find by a slight computation that the denominator of $d$ must be divisible by $125$.
This contradicts the fact that $d\in\mathbb{Z}_{(5)}$.
Thus, the higher Whitehead product $\omega\colon\Sigma^2(A\wedge A\wedge A)\to B\mathrm{G}_2$ is nontrivial.
Therefore $\mathrm{G}_2$ is not a Williams $C_3$-space at $p=5$.
\end{proof}

\bibliographystyle{alpha}
\bibliography{2016commutativity}

\begin{thebibliography}{HKMO18}

\bibitem[Ada69]{MR0251716}
J.~F. Adams.
\newblock Lectures on generalised cohomology.
\newblock In {\em Category {T}heory, {H}omology {T}heory and their
  {A}pplications, {III} ({B}attelle {I}nstitute {C}onference, {S}eattle,
  {W}ash., 1968, {V}ol. {T}hree)}, pages 1--138. Springer, Berlin, 1969.

\bibitem[CS95]{MR1348817}
M.~C. Crabb and W.~A. Sutherland.
\newblock How non-abelian is non-abelian gauge theory?
\newblock {\em Quart. J. Math. Oxford Ser. (2)}, 46(183):279--290, 1995.

\bibitem[Got72]{MR0309111}
Daniel~Henry Gottlieb.
\newblock Applications of bundle map theory.
\newblock {\em Trans. Amer. Math. Soc.}, 171:23--50, 1972.

\bibitem[Hem91]{MR1099785}
Yutaka Hemmi.
\newblock Higher homotopy commutativity of {$H$}-spaces and the mod {$p$} torus
  theorem.
\newblock {\em Pacific J. Math.}, 149(1):95--111, 1991.

\bibitem[HK11]{MR2793107}
Yutaka Hemmi and Yusuke Kawamoto.
\newblock Higher homotopy commutativity and the resultohedra.
\newblock {\em J. Math. Soc. Japan}, 63(2):443--471, 2011.

\bibitem[HKMO18]{MR3775355}
Sho Hasui, Daisuke Kishimoto, Toshiyuki Miyauchi, and Akihiro Ohsita.
\newblock Samelson products in quasi-{$p$}-regular exceptional {L}ie groups.
\newblock {\em Homology Homotopy Appl.}, 20(1):185--208, 2018.

\bibitem[HKO14]{MR3276725}
Sho Hasui, Daisuke Kishimoto, and Akihiro Ohsita.
\newblock Samelson products in {$p$}-regular exceptional {L}ie groups.
\newblock {\em Topology Appl.}, 178:17--29, 2014.

\bibitem[HMR72]{MR0312509}
Peter Hilton, Guido Mislin, and Joseph Roitberg.
\newblock Topological localization and nilpotent groups.
\newblock {\em Bull. Amer. Math. Soc.}, 78:1060--1063, 1972.

\bibitem[IM89]{MR1000378}
Norio Iwase and Mamoru Mimura.
\newblock Higher homotopy associativity.
\newblock In {\em Algebraic topology ({A}rcata, {CA}, 1986)}, volume 1370 of
  {\em Lecture Notes in Math.}, pages 193--220. Springer, Berlin, 1989.

\bibitem[Iwa]{Iwase}
Norio Iwase.
\newblock Associahedra, multiplihedra and units in {$A_\infty$} form.
\newblock {\em arXiv: 1211.5741}.

\bibitem[Iwa98]{MR1642747}
Norio Iwase.
\newblock Ganea's conjecture on {L}usternik-{S}chnirelmann category.
\newblock {\em Bull. London Math. Soc.}, 30(6):623--634, 1998.

\bibitem[KK10]{MR2678992}
Daisuke Kishimoto and Akira Kono.
\newblock Splitting of gauge groups.
\newblock {\em Trans. Amer. Math. Soc.}, 362(12):6715--6731, 2010.

\bibitem[KKT13]{MR3082304}
D.~Kishimoto, A.~Kono, and S.~Theriault.
\newblock Homotopy commutativity in {$p$}-localized gauge groups.
\newblock {\em Proc. Roy. Soc. Edinburgh Sect. A}, 143(4):851--870, 2013.

\bibitem[McG84]{MR745146}
C.~A. McGibbon.
\newblock Homotopy commutativity in localized groups.
\newblock {\em Amer. J. Math.}, 106(3):665--687, 1984.

\bibitem[McG89]{MR999733}
C.~A. McGibbon.
\newblock Higher forms of homotopy commutativity and finite loop spaces.
\newblock {\em Math. Z.}, 201(3):363--374, 1989.

\bibitem[Por65]{MR0174054}
Gerald~J. Porter.
\newblock Higher-order {W}hitehead products.
\newblock {\em Topology}, 3:123--135, 1965.

\bibitem[Sau95]{MR1337215}
Laia Saumell.
\newblock Higher homotopy commutativity in localized groups.
\newblock {\em Math. Z.}, 219(2):203--213, 1995.

\bibitem[Sta63]{MR0158400}
James~Dillon Stasheff.
\newblock Homotopy associativity of {$H$}-spaces. {I}, {II}.
\newblock {\em Trans. Amer. Math. Soc. 108 (1963), 275-292; ibid.},
  108:293--312, 1963.

\bibitem[Sug61]{MR0120645}
Masahiro Sugawara.
\newblock On the homotopy-commutativity of groups and loop spaces.
\newblock {\em Mem. Coll. Sci. Univ. Kyoto Ser. A Math.}, 33:257--269,
  1960/1961.

\bibitem[Tsu16]{MR3491849}
Mitsunobu Tsutaya.
\newblock Mapping spaces from projective spaces.
\newblock {\em Homology Homotopy Appl.}, 18(1):173--203, 2016.

\bibitem[Wat85]{MR807104}
Takashi Watanabe.
\newblock Chern characters on compact {L}ie groups of low rank.
\newblock {\em Osaka J. Math.}, 22(3):463--488, 1985.

\bibitem[Wil69]{MR0240818}
Francis~D. Williams.
\newblock Higher homotopy-commutativity.
\newblock {\em Trans. Amer. Math. Soc.}, 139:191--206, 1969.

\end{thebibliography}
\end{document}